\title[Multiplicity-free representations over quasi-symmetric Siegel domains]{Multiplicity-free representations and\\
coisotropic actions of certain\\
nilpotent Lie groups over\\
quasi-symmetric Siegel domains}
\author{Koichi Arashi}
\address{K. Arashi: Department of Mathematics, Tokyo Gakugei University, Nukuikita 4-1-1, Koganei,Tokyo 184-8501, Japan}
\email{arashi@u-gakugei.ac.jp}
\numberwithin{equation}{section}
\theoremstyle{definition}
\newtheorem{definition}{Definition}[section]
\newtheorem{remark}{Remark}[section]
\theoremstyle{plain}
\newtheorem{lemma}{Lemma}[section]
\newtheorem{theorem}{Theorem}[section]
\newtheorem{proposition}{Proposition}[section]
\newtheorem{corollary}{Corollary}[section]
\newcommand{\boxx}[2]{#1\,\Box\,#2}
\DeclareMathOperator{\re}{Re}
\DeclareMathOperator{\im}{Im}
\DeclareMathOperator{\cl}{cl}
\DeclareMathOperator{\tr}{tr}
\DeclareMathOperator{\End}{End}
\DeclareMathOperator{\ext}{ext}
\DeclareMathOperator{\Ad}{Ad}
\DeclareMathOperator{\Id}{Id}
\DeclareMathOperator{\Sym}{Sym}
\DeclareMathOperator{\rank}{rank}
\definecolor{shadecolor}{gray}{0.80}
\begin{document}

\begin{abstract}
We study multiplicity-free representations of Lie groups over a quasi-symmetric Siegel domain, with a focus on certain two-step nilpotent Lie groups.
We provide necessary and sufficient conditions for the multiplicity-freeness property to hold.
Specifically, we establish the equivalence between the disjointness of irreducible unitary representations realized over the domain, the multiplicity-freeness of the unitary representation on the Bergman space, and the coisotropicity of the group action.
\bigskip

\noindent {\it Mathematics Subject Classification:}
Primary: 22E27, Secondary: 32M05, 53C55
\\ {\it Keywords: coherent state representation, coisotropic action, method of coadjoint orbits, multiplicity-free representation, quasisymmetric Siegel domain, reproducing kernel Hilbert space, visible action}
\end{abstract}
\maketitle
\tableofcontents
\section{Introduction}
Addressing intricate problems in the representation theory of Lie groups from a foundational perspective, particularly those involving irreducible representations, is a basic approach and is particularly effective when studying unitary representations on spaces of holomorphic functions.

In this paper, our focus is on the quasi-symmetric Siegel domain, a generalization of a Hermitian symmetric space of noncompact type. 
Let $\Omega$ be a symmetric cone contained in a finite-dimensional real inner product space $(U,\langle\cdot,\cdot\rangle_U)$.
Fix a reference point $e \in \Omega$, assuming it satisfies a certain compatibility condition with $\langle \cdot, \cdot \rangle_U$, and consider the Jordan algebra structure on $U$.
Let $(V, h)$ be a finite-dimensional complex inner product space, and consider a unital Jordan algebra homomorphism $x \mapsto 2R_x$ from $U$ into $\mathcal{H}(V, h)$, the set of all self-adjoint operators on $V$.
From this, we define a Hermitian map $Q: V \times V \rightarrow U_\mathbb{C}$ (see Sections \ref{subsect:selfdual} and \ref{sect:quasisymmetric} for details).
This gives rise to a quasi-symmetric Siegel domain
\begin{align*}
\mathcal{S}(\Omega, Q) = \{ (z, v) \in U_\mathbb{C} \times V \mid \im z - Q(v, v) \in \Omega \}.
\end{align*}
Here, we note that a more general notion of the Siegel domain can be defined solely by a regular cone $\Omega_0 \subset U$ and an $\Omega_0$-positive Hermitian map $Q_0: V \times V \rightarrow U_\mathbb{C}$, denoted by $\mathcal{S}(\Omega_0, Q_0)$.
Moreover, a quasi-symmetric Siegel domain is originally defined as a Siegel domain that does not necessarily satisfy one of the conditions characterizing symmetric domains (see \cite{satake_algebraic_2014} for details).
It is known \cite{gindikin_analysis_1964} that the Bergman kernel $K$ of the domain, i.e., the reproducing kernel of the space $L_a^2(\mathcal{S}(\Omega, Q))$ of all $L^2$ holomorphic functions on $\mathcal{S}(\Omega,Q)$ with respect to the measure defined by certain bases of $U$ and $V$ (see Sect. \ref{sect:L2} for details) can be expressed as follows :
\begin{align}\label{eq:intabsintegralexpression}
K(z,v,z',v') = \int_\Omega e^{i\langle x, z - \overline{z'} - 2iQ(v,v') \rangle_U} \, dm(x),
\end{align}
where $m$ is a measure on $\Omega$ equivalent to the natural complete measure and $\langle\cdot,\cdot \rangle_U$ is extended to a complex bilinear form.
In this formula, the generalized Heisenberg group $G^V = U \rtimes V$ which consists of all affine transformations of $\mathcal{S}(\Omega, Q)$ that preserve the values $\im z - Q(v,v) \in \Omega$ plays a significant role.
To clarify, for a complex manifold $\mathcal{D}$ let $\mathcal{O}(\mathcal{D})$ denote the space of all holomorphic functions on $\mathcal{D}$, and for a group $G_0$ of holomorphic automorphisms of $\mathcal{D}$, let $\Gamma_{G_0}(\mathcal{D})$ denote the convex cone consisting of all $G_0$-invariant reproducing kernels.
Then according to \cite{schwartz_sousespaces_1964}, there exists a bijective correspondence between the set $\Gamma_{G_0}(\mathcal{D})$ and the set of all $G_0$-invariant Hilbert subspaces of $\mathcal{O}(\mathcal{D})$, which give rise to unitary representations of $G_0$.
Returning to our setting, the integrands $e^{i\langle x, z - \overline{z'} - 2iQ(v,v') \rangle_U}\,( x \in \Omega)$ in \eqref{eq:intabsintegralexpression} lie in $\Gamma_{G^V}(\mathcal{S}(\Omega, Q))$, and they are non-overlapping with respect to the unitary dual $\widehat{G^V}$.
A similar phenomenon occurs in other function spaces beyond $L_a^2(\mathcal{S}(\Omega, Q))$ (see \cites{nussbaum_hausdorffbernsteinwidder_1955, vergne_analytic_1976, ishi_representations_1999}), leading to the question of whether the condition \begin{align*}
\ext(\Gamma_{G^V}(\mathcal{S}(\Omega,Q)))/\mathbb{R}^\times \hookrightarrow \widehat{G^V}
\end{align*}
holds.
Here, $\ext$ stands for the extremal points, and the condition represents a certain form of multiplicity-freeness property, and we can also replace $G^V$ by other groups.

In relation to these multiplicity-free conditions, connections to the geometrical properties of group orbits of the base space have been investigated.
For a connected compact subgroup $G_0$ of the holomorphic automorphism group of $\mathcal{S}(\Omega, Q)$, the condition $\ext(\Gamma_{G_0}(\mathcal{S}(\Omega, Q)))/\mathbb{R}^\times \hookrightarrow \widehat{G_0}$ holds if and only if the action of $G_0$ on $\mathcal{S}(\Omega, Q)$ is coisotropic with respect to the symplectic form induced by the Bergman metric of $\mathcal{S}(\Omega,Q)$ \cite{huckleberry_multiplicityfree_1990}.
This fact offers an additional perspective on the multiplicity-freeness property explored in this paper.
In addition to this, we note that coisotropic actions of groups on symplectic manifolds exhibit significant connections to the representation theory (see, e.g., \cites{guillemin_multiplicityfree_1984, vinberg_commutative_2001, knop_classification_2006, geatti_polar_2017, kitagawa_uniformly_}).
Although the groups considered in this paper are nilpotent, we also explore the coisotropicity of group orbits, which requires a different approach in the proof.
We also note that the multiplicity-free theorems in \cites{faraut_invariant_1999, kobayashi_propagation_2013a}, which apply to not necessarily compact Lie groups, such as in the settings of symmetric spaces (e.g., \cites{kobayashi_visible_2007,sasaki_characterization_2010}), spherical varieties (e.g.,\cites{kobayashi_generalized_2007a,tanaka_classification_2012,tanaka_visible_2022}), and Heisenberg homogeneous spaces \cite{baklouti_visible_2021}, were further explored in \cites{arashi_visible_2022a, arashi_multiplicityfree_a} in the context of Siegel domains.

In this paper, toward understanding the necessary and sufficient conditions for the multiplicity-freeness property of representations of nilpotent Lie groups over K\"ahler manifolds, we focus on the subgroups $G^W = U \rtimes W$ of $G^V$, where $W\subset V$ is a real subspace.
Note that, in general, the property under consideration can be derived from the corresponding properties of smaller groups.
Let $j$ be the complex structure on $V$ and $S(=S(W)):=jW^{\perp,\,\re h}$ the orthogonal complement of $jW$ with respect to $\re h$.
The main theorem of this paper is as follows. 
While the implication $\ref{condi:mainmf}\Rightarrow\ref{condi:mainl2}$ of the theorem follows from \cite[Theorem 2]{faraut_invariant_1999}, the remaining parts constitute the primary contribution of this paper.
\begin{theorem}[see Corollary \ref{cor:112} and Theorems \ref{th:220}, \ref{th:5.11}, \ref{th:5.14}]\label{th:main}
For a real subspace $W\subset V$, the following conditions are equivalent:
\begin{enumerate}[label=\textup{(\roman*)}]
\item
$\ext(\Gamma_{G^W}(\mathcal{S}(\Omega,Q)))/\mathbb{R}^\times\hookrightarrow\widehat{G^W}$;\label{condi:mainmf}
\item
The natural unitary representation of $G^W$ on $L_a^2(\mathcal{S}(\Omega,Q))$ is multiplicity-free;\label{condi:mainl2}
\item
$\im Q(S,S)=\{0\}$;\label{condi:mainreal}
\item
Each $G^W$-orbit of $\mathcal{S}(\Omega, Q)$ is a coisotropic submanifold with respect to the symplectic form induced by the Bergman metric.\label{condi:maincoiso}
\end{enumerate}
\end{theorem}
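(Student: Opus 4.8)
The plan is to single out condition \ref{condi:mainreal} as the central, purely algebraic pivot and to prove the four-way equivalence as a hub around it, namely \ref{condi:mainmf}$\Leftrightarrow$\ref{condi:mainl2}, \ref{condi:mainl2}$\Leftrightarrow$\ref{condi:mainreal}, and \ref{condi:maincoiso}$\Leftrightarrow$\ref{condi:mainreal}. Everything rests on the observation that $G^W=U\rtimes W$ is a two-step nilpotent group whose commutator is governed by $\im Q$: a direct computation with the defining affine action $(a,b)\cdot(z,v)=(z+a+2iQ(v,b)+iQ(b,b),v+b)$ shows that the group law reads $(a_1,b_1)(a_2,b_2)=(a_1+a_2+2\im Q(b_1,b_2),\,b_1+b_2)$, so that $U$ is central and the Lie bracket on $W$ is a constant multiple of $\im Q$ valued in $U$. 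Consequently the irreducible unitary representations of $G^W$ arising over $\mathcal{S}(\Omega,Q)$ are, by the orbit method, parametrized by the functionals $\langle x,\cdot\rangle_U$ with $x\in\Omega$, and the whole problem is reduced to the behaviour of the family of forms $\omega_x(v,v')=\langle x,\im Q(v,v')\rangle_U$ on $V$.

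For \ref{condi:mainmf}$\Leftrightarrow$\ref{condi:mainl2} I would invoke \cite[Theorem 2]{faraut_invariant_1999} for the stated implication \ref{condi:mainmf}$\Rightarrow$\ref{condi:mainl2}. For the converse I would use the integral expression \eqref{eq:intabsintegralexpression}, which exhibits $L_a^2(\mathcal{S}(\Omega,Q))$ as a direct integral over $\Omega$ whose fibres carry the representations attached to the functionals above; the Schwartz correspondence between $\Gamma_{G^W}(\mathcal{S}(\Omega,Q))$ and invariant Hilbert subspaces, combined with this orbit picture, identifies the extremal rays of $\Gamma_{G^W}(\mathcal{S}(\Omega,Q))$ with these irreducibles, so that the disjointness packaged in \ref{condi:mainmf} becomes equivalent to multiplicity-freeness of the concrete decomposition of $L_a^2$. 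This is the content I would attribute to Corollary \ref{cor:112} and Theorem \ref{th:220}.

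The heart of the matter is \ref{condi:mainl2}$\Leftrightarrow$\ref{condi:mainreal}. Here I would analyze the restriction to $G^W$ of each Heisenberg representation of the ambient group $G^V$. Since $S=jW^{\perp,\re h}$ is exactly the symplectic orthogonal of $W$ for the reference form $\omega_e=\im h$ — using the identity $\re h(v,jw)=\im h(v,w)$ together with the $j$-invariance of $\re h$ — the directions transverse to $W$ are controlled by $S$. The restriction of each ambient representation is multiplicity-free for every $x$, and the fibres over distinct $x$ remain mutually disjoint in the direct integral, precisely when the transverse directions $S$ generate an abelian subgroup, i.e. when $[S,S]=\{0\}$; by the bracket computation this is $\im Q(S,S)=\{0\}$, which is \ref{condi:mainreal}. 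I expect this branching analysis — simultaneously securing within-fibre multiplicity one and cross-fibre disjointness over the entire cone $\Omega$ — to be the main obstacle, precisely because it must convert the a priori $x$-dependent coisotropy of $W$ inside $(V,\omega_x)$ into the single $U$-valued vanishing statement $\im Q(S,S)=\{0\}$, and this is what Theorem \ref{th:5.14} should accomplish.

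Finally, for \ref{condi:maincoiso}$\Leftrightarrow$\ref{condi:mainreal} I would compute the Bergman Kähler form on the orbit directly. Writing a tangent space $T_p\mathcal{O}$ as the span of the infinitesimal $U$-action and the vectors $(2iQ(v,b),b)$ for $b\in W$, and using that on a Kähler manifold coisotropy of $\mathcal{O}$ amounts to $(T_p\mathcal{O})^{\perp_g}\subseteq J(T_p\mathcal{O})$, where $J$ is the complex structure and $g$ the Bergman metric, the condition unwinds at the base point $(ie,0)$ to $S\subseteq W$ and, after reinstating the $v$-dependence through the factors $2\re Q(v,b)$ and propagating over the orbit, to the full requirement $\im Q(S,S)=\{0\}$. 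Because $G^W$ is nilpotent rather than compact, the equivalence with multiplicity-freeness cannot be imported from \cite{huckleberry_multiplicityfree_1990}; instead it is obtained by routing both \ref{condi:mainl2} and \ref{condi:maincoiso} through the algebraic pivot \ref{condi:mainreal}, which is what Theorem \ref{th:5.11} is meant to supply.
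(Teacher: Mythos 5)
Your overall architecture (making \ref{condi:mainreal} the hub, using the orbit method for the $L^2_a$ direction, and a K\"ahler computation for coisotropy) matches the paper's, but there are three concrete gaps. First, your proposed direct equivalence \ref{condi:mainmf}$\Leftrightarrow$\ref{condi:mainl2} is circular: you say the Schwartz correspondence ``identifies the extremal rays of $\Gamma_{G^W}(\mathcal{S}(\Omega,Q))$ with these irreducibles,'' but the injectivity of the map from extremal rays into $\widehat{G^W}$ \emph{is} condition \ref{condi:mainmf}, and it is false without hypothesis \ref{condi:mainreal}. The paper instead closes the cycle \ref{condi:mainreal}$\Rightarrow$\ref{condi:mainmf}$\Rightarrow$\ref{condi:mainl2}$\Rightarrow$\ref{condi:mainreal}, and the hard step \ref{condi:mainreal}$\Rightarrow$\ref{condi:mainmf} (Corollary \ref{cor:112}) is entirely absent from your plan: it requires classifying \emph{all} extremal $G^W$-invariant kernels (Theorem \ref{th:4.4}) and matching them with holomorphically induced representations via explicit intertwining operators (Propositions \ref{prop:245}--\ref{prop:114}). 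Relatedly, your assertion that the relevant irreducibles are ``parametrized by $\langle x,\cdot\rangle_U$ with $x\in\Omega$'' is wrong: the admissible parametrization of $\ext(\Gamma_{G^W}(\mathcal{S}(\Omega,Q)))$ runs over pairs $(x,\chi)\in\Lambda\times S^*$, where $\Lambda$ contains degenerate $x$ far outside $\Omega$ (only $g_x|_{P\times P}\geq 0$ and $N_x\subset\ker(g_x)$ are required) and the extra character $\chi$ on $S$ is essential. Only the $L^2_a$ decomposition is supported on $\Omega$.

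Second, your treatment of \ref{condi:maincoiso}$\Leftrightarrow$\ref{condi:mainreal} underestimates where the difficulty lies. Evaluating coisotropy at the single base point $(ie,0)$ gives only $jS\subset S^{\perp,\,g_{e'}}$ for the one element $e'=\mathcal{I}_\Delta(e)$, and ``propagating over the orbit'' does not help because all points of the $G^W$-orbit of $(ie,0)$ give the same linear-algebraic condition; one must vary the \emph{orbit}, i.e.\ the base point $(iy,0)$ over all $y\in\Omega$. The paper does this by proving $\tilde g_{(iy,0)}|_{V\times V}=g_{\mathcal{I}_\Delta(y)}$ (equation \eqref{eq:gxgtilde}), invoking Dorfmeister's result that the pseudo-inverse map $\mathcal{I}_\Delta$ is onto $\Omega$, and then converting the family of conditions $jS\subset S^{\perp,\,g_{x^{-1}}}$ into $\im Q(S,S)=\{0\}$ via the inversion identity of Lemma \ref{lem:111}. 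The converse \ref{condi:mainreal}$\Rightarrow$\ref{condi:maincoiso} moreover requires explicitly identifying $(T_cG^Wc)^{\perp,\tilde g_c}$ over the cross-section $C=(i\Omega\times jS)\cap\mathcal{S}(\Omega,Q)$, which rests on the centrality of $T_{e'}$ in $\mathfrak{g}(\Omega)$ (Proposition \ref{prop:comm}); nothing in your sketch produces these inputs. Finally, your heuristic for \ref{condi:mainl2}$\Rightarrow$\ref{condi:mainreal} --- that $S$ should ``generate an abelian subgroup'' --- is not well posed, since without \ref{condi:mainreal} one does not even have $S\subset W$; the actual criterion from the Corwin--Greenleaf analysis is $jW^{\perp,\,g_x}\subset W$ modulo $\ker(g_x)$ for a.e.\ $x\in\Omega$, upgraded to all $x$ by continuity and then to $\im Q(S,S)=\{0\}$ again through Lemma \ref{lem:111}. (You also attribute this step to Theorem \ref{th:5.14}; it is Theorem \ref{th:220}.)
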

Our proof of \ref{condi:mainreal} $\Rightarrow$ \ref{condi:mainmf} of Theorem \ref{th:main} is founded on determining the set $\ext(\Gamma_{G^W}(\mathcal{S}(\Omega,Q)))$ and constructing intertwining operators between the holomorphic induced representations in \cite{auslander_polarization_1971} and unitary representations of $G^W$ realized in $\mathcal{O}(\mathcal{S}(\Omega,Q))$.
For $x \in U$, let $g_x \in \Sym^2((V_\mathbb{R})^*)$ be given by $g_x(v, v') = \langle x, \re Q(v, v') \rangle_U$ for $v, v' \in V$.
In our previous work \cite{arashi_multiplicityfree_a}, we primarily focused on two cases: when $W = V$ and when $W$ is a real form of $V$.
In contrast to the latter case, for $x \in \Omega$, the condition condition that $W$ is coisotropic with respect to the symplectic form $\omega_x(\cdot,\cdot)=g_x(\cdot,j\cdot)$ does not necessarily imply that $W$ is isotropic with respect to the same symplectic form.
This introduces new complexities in our analysis.
We overcome this difficulty by developing useful tools based on the spectral theorem of Jordan algebras, and by employing the pseudo-inverse map for a Siegel domain (see, e.g., \cites{dorfmeister_homogeneous_1982, nomura_geometric_2003,kai_characterization_2005}), a generalization of Vinberg's $\ast$-map \cite{vinberg_theory_1963}, particularly in the proof of the equivalence \ref{condi:mainreal} $\Leftrightarrow$ \ref{condi:maincoiso} of Theorem \ref{th:main}.

Using the condition \ref{condi:mainreal} along with the necessary and sufficient conditions in \cite[Theorem 1.2]{arashi_multiplicityfree_a} concerning the visible action \cite{kobayashi_multiplicityfree_2005}, we derive the following corollary.
\begin{corollary}\label{cor:main}
The following condition can be added to Theorem \ref{th:main}:
\begin{enumerate}[label=\textup{(\roman*)}, start=5]
\item
$S\cap jS=\{0\}$ and the action of $G^S$ on $\mathcal{S}(\Omega,Q|_{(S+jS)\times (S+jS)})$ is strongly visible with respect to an involutive anti-holomorphic diffeomorphism.
\end{enumerate}
\end{corollary}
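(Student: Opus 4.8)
The plan is to show that condition (v) is equivalent to condition \ref{condi:mainreal}, namely $\im Q(S,S)=\{0\}$; since \ref{condi:mainreal} is already equivalent to \ref{condi:mainmf}--\ref{condi:maincoiso} by Theorem \ref{th:main}, establishing this single equivalence suffices to append (v) to the list. Throughout I would keep fixed a point $x\in\Omega$, for which the symmetric form $g_x$ is positive definite, and exploit the relation $\langle x,\im Q(v,v')\rangle_U=\omega_x(v,v')$ valid for all $v,v'\in V$, which follows from the Hermitian property $Q(jv,v')=iQ(v,v')$ (whence $\re Q(jv,v')=-\im Q(v,v')$) together with the $j$-invariance of $g_x$.

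The first step is to deduce from \ref{condi:mainreal} the first clause of (v), namely $S\cap jS=\{0\}$. By the displayed relation, $\im Q(S,S)=\{0\}$ says precisely that $S$ is isotropic with respect to $\omega_x$ for every $x\in\Omega$. Fixing such an $x$, suppose $0\neq v\in S\cap jS$ and write $v=jw$ with $w\in S$; then $w,v\in S$ while $\omega_x(w,v)=\omega_x(w,jw)=g_x(w,-w)=-g_x(w,w)<0$, contradicting the isotropy of $S$. Hence $S\cap jS=\{0\}$, and this step is pure symplectic linear algebra.

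Once $S\cap jS=\{0\}$ is known, $S+jS$ is a $j$-invariant (hence complex) subspace of $V$ and $S$ is a real form of it. The restriction $Q|_{(S+jS)\times(S+jS)}$ is again an $\Omega$-positive Hermitian map, so $\mathcal{S}(\Omega,Q|_{(S+jS)\times(S+jS)})$ is a well-defined Siegel domain on which $G^S=U\rtimes S$ acts, $S$ being a real subspace of $S+jS$. I would then invoke \cite[Theorem 1.2]{arashi_multiplicityfree_a}, applied with $S$ as the real form of $S+jS$: the strong visibility of the $G^S$-action on $\mathcal{S}(\Omega,Q|_{(S+jS)\times(S+jS)})$ with respect to an involutive anti-holomorphic diffeomorphism is equivalent to the vanishing $\im Q(S,S)=\{0\}$. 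This gives \ref{condi:mainreal} $\Rightarrow$ (v): from \ref{condi:mainreal} we first obtain $S\cap jS=\{0\}$ and then the visibility. Conversely, (v) already contains $S\cap jS=\{0\}$, so the sub-domain is defined, and its visibility clause together with \cite[Theorem 1.2]{arashi_multiplicityfree_a} returns $\im Q(S,S)=\{0\}$, i.e.\ \ref{condi:mainreal}.

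The main obstacle I anticipate is matching the hypotheses of \cite[Theorem 1.2]{arashi_multiplicityfree_a} to the sub-domain $\mathcal{S}(\Omega,Q|_{(S+jS)\times(S+jS)})$: that theorem is formulated for a real form of the ambient complex space, whereas here the ambient space has been replaced by $S+jS$ while the cone $\Omega$ is retained. I would need to verify that the criterion of that theorem specializes \emph{exactly} to the vanishing of $\im Q$ on $S\times S$ (rather than to a condition involving a modified Hermitian map or a restricted cone), and that the involutive anti-holomorphic diffeomorphism it produces is compatible with the holomorphic embedding of the sub-domain into $\mathcal{S}(\Omega,Q)$. Confirming this compatibility, rather than the linear-algebra reduction of the first two steps, is where the substantive work lies.
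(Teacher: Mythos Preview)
Your proposal is correct and follows essentially the same approach as the paper: the paper derives the corollary directly from condition \ref{condi:mainreal} together with \cite[Theorem 1.2]{arashi_multiplicityfree_a}, and your argument does exactly this, supplying the elementary step $\im Q(S,S)=\{0\}\Rightarrow S\cap jS=\{0\}$ (which the paper records elsewhere as \eqref{eq:240}, via $\re h(jS,S)=\{0\}$) and then invoking the cited theorem on the sub-domain where $S$ is a real form. The residual concern you flag about matching hypotheses is legitimate bookkeeping but not a gap: once $S$ is a real form of $S+jS$, the cited theorem applies verbatim to $\mathcal{S}(\Omega,Q|_{(S+jS)\times(S+jS)})$ and its criterion is precisely $\im Q(S,S)=\{0\}$.
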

In addition, we note that under any condition in Theorem \ref{th:main}, the action of $G^W$ on $\mathcal{S}(\Omega,Q)$ is visible.

We now describe an admissible parametrization of $\ext(\Gamma_{G^W}(\mathcal{S}(\Omega, Q)))$, a certain family of extremal $G^W$-invariant reproducing kernels, allowing each $K \in \Gamma_{G^W}(\mathcal{S}(\Omega, Q))$ to be expressed as an integral of them (see Sect. \ref{subsect:invariant} for details).
Instead of presenting the general result here, we offer an example with a concrete description.
Fixing a Jordan frame $e_1,e_2,\cdots, e_r$ of $U$, for $x\in U$, let us consider the Peirce decomposition $x=x_1+x_{1/2}+x_0$ with $x_\lambda\in U(e_1,\lambda)=\{x\in U\mid T_{e_1}x=\lambda x\}\,(\lambda=1,1/2,0)$.
Here, the left multiplication of an element $u\in U_0$ in a Jordan algebra $U_0$ is denoted by $T_{u}$.
We assume that the Euclidean Jordan algebra $U$ is simple, $R_x\neq 0$ for all $x\neq 0$, and $\langle e_1,e_1\rangle_U=1$.
Noting that $U(e_1,0)$ is a subalgebra of $U$,  for $0\leq k\leq r-1$, let
\begin{align}\label{eq:lambdak}
\Lambda_k=\left\{{x}\in U\Bigm| \begin{array}{c}T_{x_0}\in\End(U(e_1,0))\text{ is positive semi-definite},\, \rank x_0=k,\\\text{ there exists }y\in U(e_1,1/2)\text{ such that }x_{1/2}=2T_yx_0\end{array}\right\}.
\end{align}
Combining our classification of extremal $G^W$-invariant reproducing kernels and the general theory about the admissible parametrization \cite{faraut_invariant_1999}, we have the following theorem.
In this theorem, for a real vector space $W_0$, we denote the complex conjugate of $v\in (W_0)_\mathbb{C}$ with respect to $W_0$ by $\overline{v}$.
\begin{theorem}[see Corollary \ref{cor:115} and Sect. \ref{subsect:specificform}]\label{th:main2}
For a suitable choice of a real form $S$ of $R_{e_1}V$ and a complex subspace $P \subset V$ such that $V = R_{e_1}V \oplus P$, any condition in Theorem \ref{th:main} holds for $W = P \oplus S$.
Moreover, for any $G^W$-invariant Hilbert subspace $\mathcal{H}$ of $\mathcal{O}({\mathcal{S}(\Omega,Q)})$, there exists unique Radon measures $m_{k}$ on $\Lambda_k\times S^*\,(k=0,1,\cdots,r-1)$ such that the reproducing kernel $K^\mathcal{H}$ of $\mathcal{H}$ is expressed as
\begin{align*}
K^\mathcal{H}(z,v,z',v')=\sum_{k=0}^{r-1}\int_{\Lambda_k\times S^*}L^{{x},\chi}(z,v,z',v')\,dm_k({x},\chi).
\end{align*}
Here, for $(x,\chi)\in \Lambda_k\times S^*$, letting $y\in U(e_1,1/2)$ given as in \eqref{eq:lambdak} and extending $\chi$ to a linear form on $R_{e_1}V$ by the complex linearity, the function $L^{x,\chi}\in\ext(\Gamma_{G^W}(\mathcal{S}(\Omega,Q)))$ is defined by
\begin{align*}
L^{{x},\chi}(z,q+s,z',q'+s')&=e^{i\langle {x},z-\overline{z'}-2iQ(q,q')\rangle_U}e^{-i\langle\chi,s-\overline{s'}\rangle}
\\&\cdot \exp({\langle x_1-2T_{e_1}{(T_y)}^2x_0,e_1\rangle_U(h(s,\overline{s})}+h(\overline{s'},s')))
\end{align*}
with $q,q'\in P$, $s,s'\in R_{e_1}V$.
\end{theorem}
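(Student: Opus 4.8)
The plan is to establish the two assertions in turn: the first is a geometric construction reducing to Theorem~\ref{th:main}, while the second combines the explicit list of extremal kernels from Corollary~\ref{cor:115} with the abstract disintegration theory of~\cite{faraut_invariant_1999}.

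For the first assertion, I would exploit that $2R_{e_1}$ is self-adjoint on $(V,h)$, so that $V=R_{e_1}V\oplus\ker R_{e_1}$ is an $h$-orthogonal decomposition into complex subspaces; the complement $P:=\ker R_{e_1}$ is the desired complex subspace. The key is then to produce a real form $S$ of $R_{e_1}V$ with $\im Q(S,S)=\{0\}$. Since $x\mapsto 2R_x$ is unital, evaluating $Q$ against the Jordan unit $e$ gives $h(v,v')=\langle Q(v,v'),e\rangle_U$, whence $\im h(S,S)=\{0\}$ follows automatically from $\im Q(S,S)=\{0\}$; this in turn yields $\re h(S,jS)=\{0\}$, so that for $W=P\oplus S$ one computes $jW=P\oplus jS$ and therefore $S(W)=(jW)^{\perp,\re h}=S$, matching the $S$ appearing in Theorem~\ref{th:main}. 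The construction of $S$ itself is where the Jordan-algebraic tools enter: using the spectral theorem for $U$ to pass to the primitive idempotent $e_1$ and the real structure on $V$ underlying $g_x\in\Sym^2((V_\mathbb{R})^*)$, I would exhibit a conjugation on $R_{e_1}V$ intertwining $Q$ with its complex conjugate, whose fixed-point set $S$ satisfies $\im Q(S,S)=\{0\}$. Condition~\ref{condi:mainreal} then holds for $W=P\oplus S$, and Theorem~\ref{th:main} supplies all the remaining equivalent conditions.

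For the integral representation, I would first record that Corollary~\ref{cor:115} identifies $\ext(\Gamma_{G^W}(\mathcal{S}(\Omega,Q)))/\mathbb{R}^\times$ with the disjoint union $\bigsqcup_{k=0}^{r-1}\Lambda_k\times S^*$ via $(x,\chi)\mapsto[L^{x,\chi}]$. I would check directly from its exponential form that each $L^{x,\chi}$ is holomorphic in the unbarred variables and anti-holomorphic in the barred ones, is invariant under $G^W=U\rtimes W$, and is positive definite; the positivity rests on the defining conditions of $\Lambda_k$ (the operator $T_{x_0}$ being positive semi-definite of rank $k$, together with the compatibility $x_{1/2}=2T_yx_0$) and on the nonnegativity of the scalar $\langle x_1-2T_{e_1}(T_y)^2x_0,e_1\rangle_U$ multiplying the term $h(s,\overline{s})+h(\overline{s'},s')$. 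Granting the multiplicity-freeness established in Theorem~\ref{th:main}, the admissible-parametrization theorem of~\cite{faraut_invariant_1999} then applies: the extremal rays are mutually disjoint as $G^W$-representations, so every $G^W$-invariant Hilbert subspace $\mathcal{H}\subset\mathcal{O}(\mathcal{S}(\Omega,Q))$ has a reproducing kernel that disintegrates uniquely over them, yielding the unique Radon measures $m_k$ on $\Lambda_k\times S^*$ with $K^{\mathcal H}=\sum_{k}\int_{\Lambda_k\times S^*}L^{x,\chi}\,dm_k(x,\chi)$.

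The main obstacle I anticipate is verifying that the concrete parametrization $\bigsqcup_{k}\Lambda_k\times S^*$ is admissible in the precise sense required by~\cite{faraut_invariant_1999}: one must equip the stratified parameter space with a standard Borel structure making $(x,\chi)\mapsto L^{x,\chi}$ a measurable isomorphism onto the set of extremal rays, and confirm that the disjointness coming from multiplicity-freeness upgrades the Choquet-type existence of a representing measure to genuine uniqueness across the different strata indexed by $k$. A secondary difficulty lies in the construction step of the first assertion, namely exhibiting the conjugation on $R_{e_1}V$ and verifying $\im Q(S,S)=\{0\}$ intrinsically from the Jordan data rather than case by case.
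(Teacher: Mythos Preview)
Your proposal misidentifies where the actual work lies. The explicit formula for $L^{x,\chi}$ in the statement of Theorem~\ref{th:main2} is the \emph{conclusion} to be derived, not an input to be verified. Corollary~\ref{cor:115} gives the extremal kernels in the general form
\[
L^{x,\chi}(z,v,z',v')=e^{i\langle x,z-\overline{z'}-2iQ(q,q')\rangle_U}\,e^{h(s,A^{x}\overline{s}^S)}\,e^{h(\overline{s'}^S,A^{x}s')}\,e^{-i\langle\chi,s-\overline{s'}\rangle},
\]
where $A^{x}=2(p^{x})^{*}R_{x}p^{x}$ involves the projection $p^{x}$ onto the $x$-dependent subspace $S^{x}\oplus jS^{x}$. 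The entire content of the proof in Section~\ref{subsect:specificform} is the computation showing that, for this particular $W$, the bilinear form $h(s,A^{x}\overline{s}^S)$ collapses to the scalar $\langle x_1-2T_{e_1}(T_y)^2x_0,e_1\rangle_U\,h(s,\overline{s})$. This requires conjugating by the Frobenius transformation $\exp\bigl(\boxx{2y}{e_1}\bigr)$ (Lemmas~\ref{lem:adjointofFK} and~\ref{lem:nilpotent}) to strip off the $x_{1/2}$-part, together with Proposition~\ref{prop:factor}, which shows that the abstract constraint $N_x\subset\ker(g_x)$ from the general $\Lambda_k$ forces $x_{1/2}=2T_yx_0$ for some $y\in U(e_1,1/2)$, thereby matching the concrete $\Lambda_k$ of \eqref{eq:lambdak}. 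None of this machinery appears in your outline, and ``checking directly from its exponential form'' does not substitute for it.

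Two smaller points. First, your anticipated obstacle about admissibility of the parametrization is not where the difficulty is: Corollary~\ref{cor:115} already establishes admissibility (local compactness, second countability, continuity of $(x,\chi)\mapsto L^{x,\chi}$) in the general setting, and uniqueness then follows from Theorem~\ref{th:uniquemeasure}. Second, for the first assertion the paper's argument is shorter than the conjugation you propose: one computes $\langle x,Q(R_{e_1}v,R_{e_1}v')\rangle_U=\langle x_1,e_1\rangle_U\langle e_1,Q(v,v')\rangle_U$ directly, so $Q|_{R_{e_1}V\times R_{e_1}V}$ is essentially $\mathbb{C}e_1$-valued and any real form $S$ of $(R_{e_1}V,h)$ automatically satisfies $\im Q(S,S)=\{0\}$; no intertwining conjugation needs to be constructed.
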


The conditions in \eqref{eq:lambdak} are derived from the positivity of the Fubini-Study metric on the infinite-dimensional projective space.
This idea originates from the study of coherent state representations \cites{lisiecki_kaehler_1990,lisiecki_classification_1991, lisiecki_coherent_1995}, which extend the concept of highest weight unitary representations (see further developments in \cite{neeb_holomorphy_1999}).

We now outline the organization of this paper.
In Sect. \ref{sect:preliminaries}, we review preliminary results relevant to our study.
Sect. \ref{subsect:invariant} introduces the notions of invariant Hilbert subspaces and the multiplicity-freeness property in the context of complex manifolds admitting group actions, providing the framework and motivation for our main theorem.
In Sect. \ref{subsect:orbit}, we briefly recall general aspects of the orbit method, which serves as a fundamental framework for analyzing unitary representations of nilpotent Lie groups.
Sect. \ref{subsect:selfdual} discusses the Euclidean Jordan algebra defined by a self-dual homogeneous cone and presents a spectral theorem.
In Sect. \ref{sect:quasisymmetric}, we introduce the notion of quasi-symmetric Siegel domain and provide useful lemmas, along with an important proposition that forms the basis for the subsequent analysis, particularly in the next section.
Sect. \ref{subsect:extremal} determines all extremal $G^W$-invariant reproducing kernels and describes an admissible parametrization of $\ext(\Gamma_{G^W}(\mathcal{S}(\Omega, Q)))$.
In Sect. \ref{subsect:intertwiningoperator}, we prove \ref{condi:mainreal} $\Rightarrow$ \ref{condi:mainmf} of Theorem \ref{th:main}.
In Sect. \ref{subsect:specificform} we focus on the special case presented in Theorem \ref{th:main2}.
In Sect. \ref{sect:L2}, we prove \ref{condi:mainl2} $\Rightarrow$ \ref{condi:mainreal} of Theorem \ref{th:main}.
In Sect. \ref{sect:coisotropic}, we show the equivalence \ref{condi:mainreal} $\Leftrightarrow$ \ref{condi:maincoiso} of Theorem \ref{th:main}.

\section{Preliminaries}\label{sect:preliminaries}
In this section, we recall the key definitions, notions, and results that are essential for the development of our study.

Throughout this paper, for a Lie group, we denote its Lie algebra by the corresponding Fraktur small letter.
For a vector space $V$ over $\mathbb{C}$ and its real form $W$, we denote the complex conjugate of $v\in V$ with respect to $W$ by $\overline{v}^W$.
The vector space over $\mathbb{R}$ obtained from $V$ by restricting the scalars to $\mathbb{R}$ will be denoted by $V_\mathbb{R}$.
For a vector space or a Lie algebra $W$ over $\mathbb{R}$, we denote by $W_\mathbb{C}$ its complexification $W\otimes_\mathbb{R}\mathbb{C}$.
The natural complex conjugate of $v\in W_\mathbb{C}$ will be denoted by $\overline{v}$.
We may extend $\xi\in W^*$ to a linear form on $W_\mathbb{C}$ by the complex linearity without making any comment.

\subsection{Invariant Hilbert subspaces and the multiplicity-freeness property}\label{subsect:invariant}
In this subsection, we review the theoretical framework studied in \cite{faraut_invariant_1999} that supports our study of representations of Lie groups on spaces of holomorphic functions, with particular attention to the multiplicity-freeness property.

Let $G$ be a Lie group, and $\pi$ a unitary representation on a separable Hilbert space $\mathcal{H}$.
Put
\begin{align*}
\End_G(\mathcal{H}):=\{A\in B(\mathcal{H})\mid A\pi(g)=\pi(g)A\text{ for all }g\in G\}.
\end{align*}
\begin{definition}
We say $\pi$ is {\it multiplicity-free} if the ring $\End_G(\mathcal{H})$ is commutative.
\end{definition}
For a complex domain $\mathcal{D}$, let $\mathcal{O}(\mathcal{D})$ be the space of holomorphic functions on $\mathcal{D}$, which we regard as a topological vector space by means of the compact-open topology.
A smooth action $G\times\mathcal{D}\ni (g,z)\mapsto g\cdot z\in\mathcal{D}$ of a Lie group $G$ by holomorphic automorphisms defines a continuous representation $(\pi_0,\mathcal{O}(\mathcal{D}))$ given by
\begin{align*}
\pi_0(g)f(z):=f(g^{-1}\cdot z)\quad(g\in G, f\in\mathcal{O}(\mathcal{D}), z\in\mathcal{D}).
\end{align*}
Next, we introduce the notions of the multiplicity-freeness of $\pi_0$ as follows.
\begin{definition}[{\cite[\S1]{schwartz_sousespaces_1964}}, {\cite[Definition 2.1]{kobayashi_propagation_2013a}}]\label{def:4.1}
\begin{enumerate}[label*=\textup{(\arabic*)}]
\item
We say $\pi$ is {\it realized in} $\mathcal{O}(\mathcal{D})$ if there exists an injective continuous $G$-intertwining operator $\Phi$ between $\pi$ and $\pi_0$.
In this case, we call the image $\Phi(\mathcal{H})$ with the induced inner product (or simply $\mathcal{H}$) a $G$-{\it invariant Hilbert subspace} of $\mathcal{O}(\mathcal{D})$.
Moreover, we use the terminology `irreducible $G$-invariant Hilbert subspace' when $\pi$ is irreducible.
\item
We say $\pi_0$ is {\it multiplicity-free} if any two irreducible $G$-invariant Hilbert subspaces of $\mathcal{O}(\mathcal{D})$ either coincide as linear spaces and have proportional inner products, or they yield inequivalent representations of $G$.
\end{enumerate}
\end{definition}
Let $\Gamma(\mathcal{D})$ be the convex cone of functions $K(z,z')$ on $\mathcal{D}\times\mathcal{D}$ holomorphic in $z$, anti-holomorphic in $z'$, and Hermitian of positive type, and $\Gamma_G(\mathcal{D})\subset\Gamma(\mathcal{D})$ be the convex cone consisting of all $G$-invariant functions.
Recall that an element $K$ of a convex cone $\Gamma$ is called {\it extremal} if any decomposition $K=K_1+K_2\,(K_1,K_2\in\Gamma)$ yields
\begin{align*}
K=\lambda_1K_1=\lambda_2K_2\quad(\lambda_1,\lambda_2\geq 0).
\end{align*}
Let $\ext (\Gamma_G(\mathcal{D}))\subset\Gamma_G(\mathcal{D})$ be the subset consisting of all extremal elements.
The following result is fundamental to our study.
\begin{theorem}[{\cite[\S 8]{schwartz_sousespaces_1964}},{\cite[Proposition 1]{faraut_invariant_1999}}]\label{th:243}
The elements of $\Gamma_G(\mathcal{D})$ (resp. $\ext (\Gamma_G(\mathcal{D}))$) stand in one-one correspondence with $G$-invariant (resp. irreducible $G$-invariant) Hilbert subspaces of $\mathcal{O}(\mathcal{D})$.
\end{theorem}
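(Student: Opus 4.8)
The plan is to reduce everything to the classical correspondence between positive-type kernels and Hilbert subspaces (Schwartz, Aronszajn) and then to translate both the $G$-action and the facial structure of the cone through it.

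First I would recall the kernel--space bijection, ignoring $G$. Given $K\in\Gamma(\mathcal{D})$, write $K_{z'}:=K(\cdot,z')$ and let $\mathcal{H}_K$ be the completion of the linear span of $\{K_{z'}:z'\in\mathcal{D}\}$ for the form $\langle K_{z'},K_{w'}\rangle:=K(w',z')$; that $K$ is Hermitian of positive type makes this a genuine inner product after quotienting out the null space, and it yields the reproducing identity $f(z)=\langle f,K_z\rangle$. Since point evaluation is continuous on $\mathcal{O}(\mathcal{D})$ for the compact-open topology, the inclusion $\mathcal{H}_K\hookrightarrow\mathcal{O}(\mathcal{D})$ is continuous and its elements are holomorphic. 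Conversely, a Hilbert subspace $\mathcal{H}\subset\mathcal{O}(\mathcal{D})$ has continuous point evaluations, so Riesz representation gives $K_z\in\mathcal{H}$ with $f(z)=\langle f,K_z\rangle$, and $K(z,z'):=\langle K_{z'},K_z\rangle$ lies in $\Gamma(\mathcal{D})$. These constructions are mutually inverse, establishing $\Gamma(\mathcal{D})\leftrightarrow\{\text{Hilbert subspaces of }\mathcal{O}(\mathcal{D})\}$.

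Second I would carry the $G$-structure across. The identity $(\pi_0(g)f)(z)=f(g^{-1}\cdot z)=\langle f,K_{g^{-1}\cdot z}\rangle$ shows that if $\pi_0(g)$ is unitary on $\mathcal{H}_K$ then $\pi_0(g)K_z=K_{g\cdot z}$, and hence $K(g\cdot z,g\cdot z')=\langle\pi_0(g)K_{z'},\pi_0(g)K_z\rangle=K(z,z')$; conversely this invariance of $K$ makes $\pi_0(g)$ isometric on the spanning vectors and so unitary on $\mathcal{H}_K$. Thus the bijection restricts to $\Gamma_G(\mathcal{D})\leftrightarrow\{G\text{-invariant Hilbert subspaces}\}$, each of the latter carrying a unitary representation of $G$.

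The crux, and the step I expect to be the main obstacle, is matching extremal elements with irreducible subspaces. Here I would build the order dictionary between dominated kernels and positive contractions on $\mathcal{H}_K$: for $0\le A\le\Id$ the kernel $K_A(z,z'):=\langle AK_{z'},K_z\rangle$ lies in $\Gamma(\mathcal{D})$ and satisfies $K-K_A=K_{\Id-A}\in\Gamma(\mathcal{D})$, so $K_A\le K$ in the cone order. The delicate converse---that every $K'\in\Gamma(\mathcal{D})$ with $0\le K'\le K$ has the form $K_A$ for a unique such $A$---is the technical heart: $K'\le K$ makes the inclusion $\iota\colon\mathcal{H}_{K'}\hookrightarrow\mathcal{H}_K$ a contraction with $\iota^{*}K_z=K'_z$, and then $A:=\iota\iota^{*}$ does the job. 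A decomposition $K=K_1+K_2$ with $K_i\in\Gamma(\mathcal{D})$ thus corresponds to $0\le A\le\Id$ via $K_1=K_A$, $K_2=K_{\Id-A}$, and one checks that $G$-invariance of $K_1,K_2$ is equivalent to $A\in\End_G(\mathcal{H}_K)$. Consequently $K$ is extremal in $\Gamma_G(\mathcal{D})$ exactly when the only $G$-invariant positive contractions on $\mathcal{H}_K$ are the scalars $\lambda\,\Id$. Because $\End_G(\mathcal{H}_K)$ is a $*$-subalgebra of $B(\mathcal{H}_K)$ linearly spanned by its positive contractions, this scalar condition amounts to $\End_G(\mathcal{H}_K)=\mathbb{C}\,\Id$, which by Schur's lemma is precisely the irreducibility of the representation on $\mathcal{H}_K$. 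Combining the three steps gives both asserted bijections.
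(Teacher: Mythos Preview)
The paper does not supply its own proof of this theorem: it is stated with citations to \cite{schwartz_sousespaces_1964} and \cite{faraut_invariant_1999} and used as a black box. Your argument is correct and is essentially the standard one found in those references---the Aronszajn/Schwartz kernel--space bijection, transported to the $G$-invariant setting, together with the identification of the face of $\Gamma_G(\mathcal{D})$ below $K$ with the positive contractions in the commutant $\End_G(\mathcal{H}_K)$, which reduces extremality to Schur's lemma. There is nothing to compare against in the paper itself.
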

\begin{remark}
For a $G$-invariant Hilbert subspace $\mathcal{H}$ of $\mathcal{O}(\mathcal{D})$, the corresponding function $K^\mathcal{H}$, known as the {\it reproducing kernel} of $\mathcal{H}$, can be defined by
\begin{align*}
(f,K_z^\mathcal{H})_\mathcal{H}=f(z)\quad(f\in\mathcal{O}(\mathcal{D}),z\in\mathcal{D}),
\end{align*}
where we put $K_z^\mathcal{H}:=K^\mathcal{H}(\cdot,z)\in\mathcal{O}(\mathcal{D})$, owing to Riesz's representation theorem.
\end{remark}
Let $\Lambda$ be a Hausdorff space.
An injective continuous map
\begin{align*}
\Lambda\ni\lambda\mapsto K^\lambda\in\ext (\Gamma_G(\mathcal{D}))
\end{align*}
is called an {\it admissible parametrization} of $\ext (\Gamma_G(\mathcal{D}))$ if one has
\begin{align*}
\ext (\Gamma_G(\mathcal{D}))=\{0\}\coprod\coprod_{\lambda\in\Lambda}\mathbb{R}_{>0}K^\lambda,
\end{align*}
and the inverse map is universally measurable.
It is known that the latter condition automatically holds if $\Lambda$ is a locally compact second countable space.
We fix such an parametrization.
Then we have the following theorems.
\begin{theorem}{\cite[Theorem 1]{faraut_invariant_1999}}\label{th:integralexpression}
For any $K\in\Gamma_G(\mathcal{D})$, there exists a Radon measure $m$ on $\Lambda$ such that
\begin{equation}\label{eq:integralexpression}
K(z,z')=\int_\Lambda K^\lambda(z,z')\,dm(\lambda)\quad(z,z'\in\mathcal{D}).
\end{equation}
Here, the integral converges uniformly on compact sets in $z$ and $z'$.
\end{theorem}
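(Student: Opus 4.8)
The plan is to obtain \eqref{eq:integralexpression} as a Choquet-type integral representation on the convex cone $\Gamma_G(\mathcal{D})$, transported to the parameter space $\Lambda$ through the admissible parametrization. The starting point is the dictionary of Theorem \ref{th:243}: a kernel $K\in\Gamma_G(\mathcal{D})$ corresponds to a $G$-invariant Hilbert subspace $\mathcal{H}(K)\subset\mathcal{O}(\mathcal{D})$, its extremal elements correspond to the irreducible ones, and the cone ordering $L\preceq K$ (meaning $K-L\in\Gamma_G(\mathcal{D})$) matches the Schwartz ordering of Hilbertian subspaces from \cite{schwartz_sousespaces_1964}. Thus the order interval $\{L\in\Gamma_G(\mathcal{D}):K-L\in\Gamma_G(\mathcal{D})\}$ is identified with the set of positive contractions $0\leq A\leq\Id$ lying in the commutant $\End_G(\mathcal{H}(K))$, and the extremal rays below $K$ correspond to the irreducible invariant subspaces it contains.

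First I would establish the compactness that Choquet theory requires. Endowing $\Gamma_G(\mathcal{D})$ with the topology of uniform convergence on compact subsets of $\mathcal{D}\times\mathcal{D}$, the evaluations $K\mapsto K(z,z')$ become continuous linear functionals. The key estimate is the reproducing-kernel domination $|f(z)|^2\leq\|f\|_{\mathcal{H}(K)}^2\,K(z,z)$, which forces every $L\preceq K$ to satisfy the Cauchy--Schwarz bound $|L(z,z')|^2\leq K(z,z)K(z',z')$; combined with Montel's theorem for the holomorphic (resp. anti-holomorphic) dependence on $z$ (resp. $z'$) and the nuclearity of $\mathcal{O}(\mathcal{D})$, this makes the order interval below $K$ compact and metrizable (the latter since $\mathcal{D}\times\mathcal{D}$ is $\sigma$-compact). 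I would then verify that $\Gamma_G(\mathcal{D})$ is \emph{well-capped}, i.e. a union of such compact convex caps whose extreme points are exactly the normalized extremal kernels together with $0$, so that the Choquet--Bishop--de Leeuw theorem for cones becomes applicable.

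Applying that theorem to $K$ then produces a Radon measure supported on the extremal rays $\ext(\Gamma_G(\mathcal{D}))$ whose barycenter is $K$; testing against the continuous functionals $K\mapsto K(z,z')$ converts the barycenter identity into the pointwise formula $K(z,z')=\int_\Lambda K^\lambda(z,z')\,dm(\lambda)$. Here I would invoke the admissible parametrization $\Lambda\ni\lambda\mapsto K^\lambda$, together with the hypothesis that its inverse is universally measurable, to pull the Choquet measure back to a genuine Radon measure $m$ on $\Lambda$, absorbing the scaling of each ray $\mathbb{R}_{>0}K^\lambda$ into $m$. The asserted uniform convergence on compacta is read off from the domination bounds of the previous step, which control the integrand uniformly in $(z,z')$ on compact sets. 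The main obstacle I anticipate is precisely the topological bookkeeping underlying this scheme: producing the compact caps and identifying their extreme points with the image of the admissible parametrization, and ensuring that the abstract representing measure, a priori living only on the Borel structure of $\ext(\Gamma_G(\mathcal{D}))$, descends to a Radon measure on $\Lambda$ via the universally measurable inverse. The reproducing-kernel estimates and nuclearity are what make the compactness work, but matching extreme points with parameters and handling this measurability is where the delicacy lies.
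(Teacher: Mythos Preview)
The paper does not prove this theorem at all: it is quoted verbatim as a preliminary result from \cite{faraut_invariant_1999}, with no argument supplied beyond the citation. So there is no ``paper's own proof'' to compare your proposal against.

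That said, your sketch is essentially the strategy of the original Faraut--Thomas paper. They work in the nuclear space $\mathcal{O}(\mathcal{D})$, use the Schwartz correspondence between Hilbertian subspaces and positive kernels to identify order intervals in $\Gamma_G(\mathcal{D})$ with positive contractions in the commutant, and then apply Choquet theory for well-capped cones to obtain the representing measure on the extremal rays; the admissible parametrization (continuous with universally measurable inverse) is exactly what lets them transport this measure to a Radon measure on $\Lambda$. Your identification of the delicate point---showing the cone is well-capped and passing the abstract Choquet measure through the parametrization---matches where the work lies in their proof. One small remark: the compactness of caps in Faraut--Thomas comes from the conuclearity of $\mathcal{O}(\mathcal{D})$ (equivalently, that $\Gamma(\mathcal{D})$ sits in a nuclear space), which is slightly more than Montel alone; you gesture at this with ``nuclearity,'' and indeed that is the right ingredient.
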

\begin{theorem}{\cite[Theorem 2]{faraut_invariant_1999}}\label{th:uniquemeasure}
The following conditions are equivalent:
\begin{enumerate}[label=\textup{(\roman*)}]
\item $(\pi_0,\mathcal{O}(\mathcal{D}))$ is multiplicity-free;
\item For any $K\in\Gamma_G(\mathcal{D})$, the Radon measure $m$ giving the integral expression \eqref{eq:integralexpression} is unique.
\item Any unitary representation of $G$ realized in $\mathcal{O}(\mathcal{D})$ is multiplicity-free.
\end{enumerate}
\end{theorem}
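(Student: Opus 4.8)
The plan is to reduce everything to the interplay between the barycentric decomposition of a kernel supplied by Theorem \ref{th:integralexpression} and the direct-integral decomposition of the associated representation, reading off the commutant from the way the extremal rays occur. Concretely, a kernel $K\in\Gamma_G(\mathcal{D})$ with representing measure $m$ on $\Lambda$ should be realized as $\mathcal{H}=\int^{\oplus}_\Lambda\mathcal{H}^\lambda\,dm(\lambda)$ and $\pi=\int^{\oplus}_\Lambda\pi^\lambda\,dm(\lambda)$, where $\pi^\lambda$ is the irreducible representation carried by the irreducible $G$-invariant Hilbert subspace with kernel $K^\lambda$ (Theorem \ref{th:243}). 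I would then prove the three conditions equivalent along the cycle \ref{condi:mainmf}\,(iii)$\Rightarrow$(i)$\Rightarrow$(ii)$\Rightarrow$(iii), using Schur's lemma, Choquet theory, and the characterization of commutative von Neumann algebras through the lattice property of their positive unit intervals.

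For (iii)$\Rightarrow$(i), I take two irreducible $G$-invariant Hilbert subspaces $\mathcal{H}_1,\mathcal{H}_2\subset\mathcal{O}(\mathcal{D})$ that do not coincide as linear spaces; since $\mathcal{H}_1\cap\mathcal{H}_2$ is a closed $G$-invariant subspace of each, irreducibility forces $\mathcal{H}_1\cap\mathcal{H}_2=\{0\}$, so the kernel $K^{\mathcal{H}_1}+K^{\mathcal{H}_2}\in\Gamma_G(\mathcal{D})$ realizes the representation on $\mathcal{H}_1\oplus\mathcal{H}_2$. If the two subspaces gave equivalent representations, then $\End_G(\mathcal{H}_1\oplus\mathcal{H}_2)\cong M_2(\mathbb{C})$ would be noncommutative, contradicting (iii). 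Hence no two non-proportional irreducible subspaces are equivalent (two that coincide as linear spaces have proportional inner products by Schur), which is exactly (i).

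For (i)$\Rightarrow$(ii), I fix $K\in\Gamma_G(\mathcal{D})$ and any representing measure $m$, and form the direct integral above. Condition (i) guarantees that the $\pi^\lambda$ are pairwise inequivalent for distinct extremal rays, so the decomposition is multiplicity-free and the diagonalizable operators $L^\infty(\Lambda,m)$ exhaust $\End_G(\mathcal{H})$; by Choquet's uniqueness criterion this multiplicity-freeness is equivalent to the face of $\Gamma_G(\mathcal{D})$ below $K$ being a simplex, whence $m$ is the unique representing measure. For (ii)$\Rightarrow$(iii), I apply Schwartz's order-isomorphism: for a realized $\pi$ on $\mathcal{H}$ with kernel $K=K^{\mathcal{H}}$, the map $A\mapsto K_A$ with $K_A(z,z')=(AK_{z'}^{\mathcal{H}},K_z^{\mathcal{H}})_{\mathcal{H}}$ carries the positive unit interval $\{A\in\End_G(\mathcal{H}):0\le A\le I\}$ affinely and order-isomorphically onto the order interval $\{K':0\preceq K'\preceq K\}$ of $\Gamma_G(\mathcal{D})$ (note $K_I=K$ and $K-K_A=K_{I-A}$). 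Uniqueness of the representing measure means this order interval is a lattice, hence so is the positive unit interval of $\End_G(\mathcal{H})$; a von Neumann algebra with lattice-ordered positive unit interval has commuting projections and is therefore commutative, so $\pi$ is multiplicity-free.

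The main obstacle I anticipate is making the passage between the barycentric and direct-integral pictures fully rigorous: constructing the measurable field $\lambda\mapsto\mathcal{H}^\lambda$ compatibly with the reproducing-kernel evaluations, identifying $\End_G\bigl(\int^{\oplus}\pi^\lambda\,dm\bigr)$ with the diagonalizable operators precisely when the fibres are $m$-almost-everywhere inequivalent, and matching Choquet's simplex criterion for uniqueness of $m$ with the lattice property of the commutant's unit interval through Schwartz's correspondence. Once these measure-theoretic identifications are in place, the three implications close the cycle and yield the stated equivalence.
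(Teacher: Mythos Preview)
The paper does not prove this theorem: it is quoted from \cite[Theorem~2]{faraut_invariant_1999} as background, with no argument supplied. There is therefore nothing in the present paper to compare your sketch against.

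That said, your outline reconstructs the strategy of the cited reference (Choquet's simplex criterion for uniqueness, Schwartz's order-isomorphism between the positive unit interval of the commutant and the order interval $[0,K]$ in $\Gamma_G(\mathcal{D})$, and the direct-integral realization of $\mathcal{H}$ over $\Lambda$), and you correctly flag the measure-theoretic steps that need care. One point in your (iii)$\Rightarrow$(i) deserves attention: the claim that $\mathcal{H}_1\cap\mathcal{H}_2$ is a \emph{closed} $G$-invariant subspace of each $\mathcal{H}_i$ is not immediate, since the two Hilbert norms on the intersection are not a priori comparable. A clean fix is to work in $\mathcal{H}_1\oplus\mathcal{H}_2$: the kernel $N=\{(f,-f):f\in\mathcal{H}_1\cap\mathcal{H}_2\}$ of the addition map into $\mathcal{O}(\mathcal{D})$ is closed and $G$-invariant, and under the assumption $\pi_1\simeq\pi_2$ the commutant is $M_2(\mathbb{C})$, whose rank-one projections force any nonzero such $N$ to be the graph of a scalar multiple of the intertwiner, which gives $\mathcal{H}_1=\mathcal{H}_2$ as linear spaces. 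With this adjustment your cycle (iii)$\Rightarrow$(i)$\Rightarrow$(ii)$\Rightarrow$(iii) is sound.
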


\subsection{Orbit method}\label{subsect:orbit}
In this subsection, we explain a fundamental framework for the study of unitary representations of nilpotent Lie groups. 
We introduce a formula for the irreducible decompositions arising from the restrictions to subgroups, which will be applied to the representation on the space of all $L^2$ holomorphic functions in Sect. \ref{sect:L2} (see Proposition \ref{prop:234} for another application of this formula).

Now we assume that $G$ is a connected and simply connected nilpotent Lie group.
Then the unitary dual $\widehat{G}$ can be identified with the set of all coadjoint orbits in $\mathfrak{g}^*$ by the Kirillov-Bernat map \cite{bernat_representations_1972}, which will be denoted by
\begin{align*}
\widehat{\rho_G}:\mathfrak{g}^*\rightarrow\widehat{G}.
\end{align*}
Let $H=\exp\mathfrak{h}\subset G$ be an analytic subgroup, and $p:\mathfrak{g}^*\rightarrow\mathfrak{h}^*$ the canonical projection.
Let $n:\widehat{G}\times\widehat{H}\rightarrow\mathbb{N}\cup\{\infty\}$ be the Corwin-Greenleaf multipictiy-function given by
\begin{align*}
n_\pi(\nu):=\#\{H\text{-orbits in }\widehat{\rho_G}^{-1}(\pi)\cap p^{-1}(\widehat{\rho_H}^{-1}(\nu))\}.
\end{align*}
Let $m$ be the pushforward measure, by $\widehat{\rho_H}\circ p$, of a finite measure on $\mathfrak{g}^*$ equivalent to the $G$-invariant measure on $\widehat{\rho_G}^{-1}(\pi)$.
Then we have the following formula.
\begin{theorem}[{\cite{corwin_spectrum_1988}}]\label{th:250}
For $\pi\in\widehat{G}$, one has
\begin{align*}
\pi|_H\simeq \int_{\widehat{H}}^\oplus n_\pi(\nu)\nu\,dm(\nu).
\end{align*}
\end{theorem}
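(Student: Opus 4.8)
The final statement to prove is Theorem~\ref{th:250}, the Corwin--Greenleaf branching formula for the restriction $\pi|_H$ of an irreducible unitary representation of a connected, simply connected nilpotent Lie group $G$ to an analytic subgroup $H$. Since this is cited as a known result from \cite{corwin_spectrum_1988}, I expect the proof to be a reconstruction of the orbit-method argument underlying that citation rather than a fully independent derivation.

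\textbf{Overall approach.} The plan is to combine three ingredients: (1) the Kirillov orbit picture, which identifies $\widehat{G}$ with $\mathfrak{g}^*/\mathrm{Ad}^*(G)$ via $\widehat{\rho_G}$ and likewise for $H$; (2) the description of restriction to a subgroup in terms of the projection $p:\mathfrak{g}^*\to\mathfrak{h}^*$; and (3) a disintegration/Mackey-type decomposition of $\pi|_H$ into a direct integral over $\widehat{H}$. The heart of the matter is the geometric statement that the spectrum of $\pi|_H$ is carried by the image $p(\mathcal{O}_\pi)$ of the coadjoint orbit $\mathcal{O}_\pi=\widehat{\rho_G}^{-1}(\pi)$ under the projection, and that the multiplicity with which $\nu\in\widehat{H}$ occurs equals the number of $H$-orbits inside the preimage $p^{-1}(\mathcal{O}_\nu^H)\cap\mathcal{O}_\pi$, which is exactly how $n_\pi(\nu)$ is defined.

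\textbf{Key steps in order.} First I would fix $\pi=\widehat{\rho_G}(\ell)$ for some $\ell\in\mathfrak{g}^*$ and realize $\pi$ by Kirillov's construction, using a Pukanszky polarization $\mathfrak{p}\subset\mathfrak{g}$ at $\ell$ so that $\pi\simeq\mathrm{Ind}_P^G\chi_\ell$ with $\chi_\ell(\exp X)=e^{i\ell(X)}$. Second, I would analyze the double coset/orbit structure governing $\mathrm{Res}_H\mathrm{Ind}_P^G$; the correct bookkeeping of the $H$-orbits in $\mathcal{O}_\pi$ under projection is the content of the Corwin--Greenleaf theory and is precisely what packages the $H$-orbit counting in the definition of $n_\pi$. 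Third, I would transport the natural $G$-invariant (Liouville) measure on $\mathcal{O}_\pi$ to $\widehat{H}$ via $\widehat{\rho_H}\circ p$, obtaining the pushforward measure $m$; along generic orbits the fibers of $p$ restricted to $\mathcal{O}_\pi$ are unions of $H$-orbits, and the Liouville measure disintegrates over these, yielding a direct-integral decomposition. Fourth, I would identify the fiber multiplicity: over $m$-almost every $\nu$, the components of $\pi|_H$ lying above $\nu$ are indexed by the $H$-orbits in $\widehat{\rho_G}^{-1}(\pi)\cap p^{-1}(\widehat{\rho_H}^{-1}(\nu))$, each contributing a single copy of $\nu$, giving the multiplicity $n_\pi(\nu)$ and hence the asserted formula $\pi|_H\simeq\int_{\widehat{H}}^\oplus n_\pi(\nu)\,\nu\,dm(\nu)$.

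\textbf{Main obstacle.} The technically delicate step is the fourth one: showing that the abstract direct-integral multiplicity coincides with the geometric orbit-counting function $n_\pi(\nu)$, and that this holds $m$-almost everywhere rather than merely on a dense set. This requires the measurability of the orbit-counting and the fact (special to the nilpotent case, where coadjoint orbits are closed, simply connected, and the relevant algebraic maps are polynomial) that the generic fiber of $p|_{\mathcal{O}_\pi}$ decomposes into finitely many $H$-orbits whose contributions do not collapse or overcount. Since Theorem~\ref{th:250} is quoted verbatim from \cite{corwin_spectrum_1988}, I would in practice invoke that reference for this almost-everywhere orbit-decomposition and measurability, and restrict my own verification to the realization of $\pi$, the identification of the pushforward measure $m$, and the assembly of these pieces into the stated direct integral.
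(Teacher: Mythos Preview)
The paper does not prove Theorem~\ref{th:250} at all; it is stated with a direct citation to \cite{corwin_spectrum_1988} and used as a black box in the subsequent analysis (notably in Proposition~\ref{prop:234} and Proposition~\ref{prop:5.2}). Your proposal is a reasonable outline of the orbit-method argument behind the Corwin--Greenleaf result, but there is no proof in the paper to compare it against, so the appropriate response here is simply to cite the reference as the author does rather than to reconstruct the argument.
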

In the following, $n_\pi(\nu)$ may be simply denoted by $n(\nu)$ when $\pi$ is clear from the context.

\subsection{Self-dual homogeneous cone}\label{subsect:selfdual}
In this subsection, we review generalities about a self-dual homogeneous cone, one of the building blocks of a quasi-symmetric Siegel domain.
We focus particularly on its algebraic structure, namely, Jordan algebra.
We present in Theorem \ref{th:228} a spectral theorem, which will be used in Sect. \ref{sect:quasisymmetric}, specifically in Proposition \ref{prop:109}.
Also, it will be used to derive spectral decompositions of self-adjoint operators on $(V,h)$ in Sect. \ref{sect:parametrization}.
For a more detailed and comprehensive treatment of Jordan algebras, the reader is referred to \cite[Chapter I, §6-8]{satake_algebraic_2014} and \cite[Chapter II]{faraut_analysis_1994}.

Let $U$ be an $N$-dimensional vector space over $\mathbb{R}$, $\Omega$ a non-empty open convex cone in $U$, and assume that $\Omega$ is {\it regular}, that is, $\Omega$ does not contain any straight line.
The group
\begin{align*}
G(\Omega):=\{g\in GL(U)\mid g\Omega=\Omega\}
\end{align*}
has a natural structure of a Lie group since $G(\Omega)$ is closed in $GL(U)$.
We assume that $\Omega$ is {\it homogeneous}, i.e., the action of $G(\Omega)$ is transitive.
Fix an inner product $\langle\cdot,\cdot\rangle_U$ on $U$, and put
\begin{align*}
\Omega^*:=\{x\in U\mid\langle x, y\rangle_U>0\text{ for all }y\in\cl (\Omega)\backslash\{0\}\}.
\end{align*}
We also assume that $\Omega$ is {\it self-dual}, i.e., $\Omega=\Omega^*$.
Then it is known that there exists an $\mathbb{R}$-group $G$ in $GL(U)$, which contains $G(\Omega)$, and the Zariski component $G^z$ is a reductive $\mathbb{R}$-group with Cartan involution $g\mapsto{}^tg^{-1}$, where for $A\in\mathfrak{gl}(U)$, we denote by ${}^tA$ the adjoint of $A$ with respect to $\langle\cdot,\cdot\rangle_U$.
Let
\begin{align*}
\mathfrak{g}(\Omega)=\mathfrak{k}_0+\mathfrak{p}_0
\end{align*}
be the Cartan decomposition of $\mathfrak{g}(\Omega)$ corresponding to the Cartan involution
\begin{align*}
\theta:\mathfrak{g}(\Omega)\ni A\mapsto -{}^tA\in\mathfrak{g}(\Omega).
\end{align*}
Take a reference point $e\in U$ {\it compatible} with $\langle\cdot,\cdot\rangle_U$, namely, the following equivalence holds true:
\begin{equation*}
A\in\mathfrak{k}_0\Leftrightarrow Ae=0\quad(A\in\mathfrak{g}(\Omega)).
\end{equation*}
For $x\in U$, let $T_x\in\mathfrak{p}_0$ be the unique element satisfying $T_xe=x$.
Letting
\begin{align*}
xy:=T_xy\quad(x,y\in U),
\end{align*}
we obtain a unital Jordan algebra $(U,e)$, which implies that
\begin{equation}\label{eq:229}
[T_a,T_{bc}]+[T_b,T_{ca}]+[T_c,T_{ab}]=0\quad(a,b,c\in U).
\end{equation}
The trace form $\tau$ on $U$ is defined as follows:
\begin{align*}
\tau(x,y):=\tr T_{xy}\quad(x,y\in U).
\end{align*}
\begin{remark}\label{rem:254}
The adjoints of $A\in\mathfrak{g}(\Omega)$ with respect to $\tau$ and $\langle\cdot,\cdot\rangle_U$ coincide.
\end{remark}
For $x\in U$, put
\begin{align*}
P(x):=2(T_x)^2-T_{x^2}\in\mathfrak{gl}(U).
\end{align*}
\begin{definition}
We say $x\in U$ is {\it invertible} if $P(x)$ is non-singular, and in this case let
\begin{align*}
x^{-1}:=P^{-1}(x)x.
\end{align*}
The set of all invertible elements in $U$ is denoted by $U^\times$.
\end{definition}
It is known that for $x\in U^\times$, we have
\begin{align*}
xx^{-1}=e,\quad x^{-1}\in U^\times,\quad(x^{-1})^{-1}=x.
\end{align*}
\begin{definition}
Idempotents $c_1,c_2,\cdots,c_r$ of $U$ are called a {\it complete system of orthogonal idempotents} if one has
\begin{align*}
c_kc_l=0\quad(1\leq k\neq l\leq r),\quad e=\sum_{k=1}^r c_k.
\end{align*}
\end{definition}
The following result serves as a fundamental tool in this paper, particularly for handling non-invertible elements.
\begin{theorem}[{\cite[THEOREM III. 1.1]{faraut_analysis_1994}}]\label{th:228}
For any $x\in U$, there exists a complete system of orthogonal idempotents $c_1,c_2\cdots,c_r$ such that $x$ is expressed as
\begin{align*}
x=\sum_{k=1}^r\lambda_k c_k
\end{align*}
with $\lambda_k\in\mathbb{R}\,(1\leq k\leq r)$.
\end{theorem}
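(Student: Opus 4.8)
The plan is to reduce the statement to the spectral decomposition of a single self-adjoint operator on the real inner product space $(U,\langle\cdot,\cdot\rangle_U)$, and then to manufacture the idempotents by a Lagrange interpolation construction inside the commutative associative algebra generated by $x$. Fix $x\in U$ and let $\mathcal{A}:=\mathbb{R}[x]\subset U$ be the linear span of $e,x,x^2,\dots$. Since any Jordan algebra is power-associative, $\mathcal{A}$ is a finite-dimensional commutative \emph{associative} unital subalgebra; moreover multiplication by $x$ on $\mathcal{A}$ is exactly the restriction $T_x|_{\mathcal{A}}$, which is cyclic with cyclic vector $e$ because $T_x^{\,j}e=x^j$. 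Consequently the minimal polynomial of the \emph{element} $x$ coincides with the minimal polynomial of the \emph{operator} $T_x|_{\mathcal{A}}$.

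The key step is to show that this minimal polynomial factors into \emph{distinct real} linear factors. For this I would use that $T_x\in\mathfrak{p}_0$ is self-adjoint with respect to $\langle\cdot,\cdot\rangle_U$ (equivalently, by Remark \ref{rem:254}, with respect to the trace form $\tau$). Since $\mathcal{A}$ is $T_x$-invariant and $T_x$ is self-adjoint, the orthogonal complement $\mathcal{A}^{\perp}$ is also $T_x$-invariant, so $T_x|_{\mathcal{A}}$ is a self-adjoint operator on the finite-dimensional real inner product space $\mathcal{A}$. Hence $T_x|_{\mathcal{A}}$ is orthogonally diagonalizable with real eigenvalues, and its minimal polynomial is $m(T)=\prod_{k=1}^{r}(T-\lambda_k)$ with $\lambda_1,\dots,\lambda_r\in\mathbb{R}$ pairwise distinct. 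This is precisely where the Euclidean (formally real) structure, encoded in the self-duality of $\Omega$, enters: it rules out repeated and complex-conjugate factors.

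It then remains to build the idempotents. For each $k$ set $c_k:=p_k(x)\in\mathcal{A}$, where $p_k(T)=\prod_{l\neq k}(T-\lambda_l)/(\lambda_k-\lambda_l)$ is the $k$-th Lagrange interpolation polynomial at the nodes $\lambda_1,\dots,\lambda_r$. Working modulo $m(T)$ in the associative algebra $\mathcal{A}\cong\mathbb{R}[T]/(m(T))$, the standard interpolation identities $p_kp_l\equiv\delta_{kl}p_k$, $\sum_k p_k\equiv 1$, and $\sum_k\lambda_k p_k\equiv T$ translate directly into $c_kc_l=\delta_{kl}c_k$, $\sum_{k=1}^r c_k=e$, and $\sum_{k=1}^r\lambda_k c_k=x$, which is the asserted decomposition. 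Orthogonality and completeness come for free once the $\lambda_k$ are known to be real and distinct.

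I expect the main obstacle to be the second step, namely verifying that $T_x|_{\mathcal{A}}$ is genuinely self-adjoint and therefore real-diagonalizable: this is the only place where the self-duality of $\Omega$ (through $T_x\in\mathfrak{p}_0$ and Remark \ref{rem:254}) is used, and it is what excludes irreducible quadratic factors in $m(T)$. Everything afterward is formal manipulation in a one-generator associative algebra. If the intended $r$ is the rank of $U$, so that the $c_k$ are required to be primitive, one further decomposes each spectral idempotent $c_k$ into primitive orthogonal idempotents inside the subalgebra $\{u\in U\mid T_{c_k}u=u\}$ with unit $c_k$; this refinement uses finiteness of the rank but no idea beyond iterating the construction above.
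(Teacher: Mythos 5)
Your proposal is correct, and it is essentially the proof of the cited source: the paper does not prove Theorem \ref{th:228} itself but imports it from \cite[THEOREM III.1.1]{faraut_analysis_1994}, where the argument is exactly the one you give --- pass to the associative subalgebra $\mathbb{R}[x]$ (power-associativity), use that $T_x\in\mathfrak{p}_0$ is self-adjoint for $\langle\cdot,\cdot\rangle_U$ so that $T_x|_{\mathbb{R}[x]}$ has a minimal polynomial with distinct real roots, and then produce the idempotents by Lagrange interpolation in $\mathbb{R}[T]/(m(T))$. Your closing remarks are also accurate: the theorem as stated does not require the $c_k$ to be primitive (that refinement is the paper's Remark \ref{primitive}), and the self-adjointness of $T_x$, which is built into the construction of the Jordan product from the Cartan decomposition of $\mathfrak{g}(\Omega)$, is indeed the only place the Euclidean structure is used.
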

\begin{remark}\label{primitive}
In the above theorem, we may assume $c_1,c_2,\cdots, c_r$ are primitive idempotents, in other words, it is a {\it Jordan frame}, and $r$ equals the rank of $U$, and in this case, we have $\det x=\prod_{k=1}^r \lambda_k$ (see {\cite[THEOREM III. 1.2]{faraut_analysis_1994}}).
\end{remark}

\section{Quasi-symmetric Siegel domain}\label{sect:quasisymmetric}
A quasi-symmetric Siegel domains is defined via a Jordan algebra representation, and hence is
fundamentally grounded in the theory of self-dual homogeneous cones.
In this section, we begin by introducing the notion of the quasi-symmetric Siegel domain in a form suitable for our study.
For the original definition and classifications, see \cite[Chapter V]{satake_algebraic_2014}.
We then present several useful lemmas and an important proposition for the subsequent analysis, particularly in Sect. \ref{sect:parametrization}.
These lemmas are also applied in Sections \ref{sect:L2} and \ref{sect:coisotropic}.

Let $V$ be a finite dimensional vector space over $\mathbb{C}$ and $h$ a Hermitian inner product on $V$.
We denote by $j$ the complex structure on $V$.
Let $\beta:\mathfrak{g}(\Omega)\rightarrow\mathfrak{gl}(V)$ be a representation of the Lie algebra $\mathfrak{g}(\Omega)$ satisfying for $A\in\mathfrak{g}(\Omega)$ and $x\in U$,
\begin{align}
\beta(T_{Ax})&=\beta(A)\beta(T_x)+\beta(T_x)\beta(A)^*,\label{eq:230}\\
\beta({}^tA)&=\beta(A)^*,\label{eq:215}\\
\beta({\Id}_U)&=\frac{1}{2}{\Id}_V,\label{eq:214}
\end{align}
where for $B\in\mathfrak{gl}(V)$, we denote by $B^*$ the adjoint of $B$ with respect to $h$.
Put
\begin{align*}
R_x:=\beta(T_x)\quad(x\in U).
\end{align*}
Let $\mathcal{H}(V,h)$ be the set of all self-adjoint operators on $(V,h)$, and for $A\in\mathcal{H}(V,h)$, define $T_A\in\mathfrak{gl}(\mathcal{H}(V,h))$ by
\begin{align*}
T_AB:=\frac{1}{2}(AB+BA)\quad(A,B\in\mathcal{H}(V,h)),
\end{align*}
which induces on $\mathcal{H}(V,h)$ a structure of Jordan algebra.
It is know that the correspondence
\begin{align*}
(U,e)\ni x\mapsto 2R_x\in\mathcal{H}(V,h)
\end{align*}
is a unital Jordan algebra homomorphism, and $R_x$ is invertible when $x\in U^\times$.
We note that
\begin{equation}\label{eq:raxy}
\tr R_{(Ax)y}=\tr R_{x({}^tAy)}\quad(x,y\in U).
\end{equation}
Define a Hermitian map $Q:V\times V\rightarrow U_\mathbb{C}$ by
\begin{align*}
2h(R_xv,v')=\langle x,Q(v,v')\rangle_U\quad (x\in U, v,v'\in V),
\end{align*}
where $\langle\cdot,\cdot \rangle_U$ is extended to a $\mathbb{C}$-bilinear form.
Then it is known that $Q$ is $\Omega$-{\it positive}, i.e.,
\begin{align*}
Q(v,v)\in\cl (\Omega)\backslash\{0\}\quad(v\neq 0).
\end{align*}
\begin{definition}
We call the following domain a {\it quasi-symmetric Siegel domain}:
\begin{align*}
{\mathcal{S}(\Omega,Q)}:=\{(z,v)\in U_\mathbb{C}\times V\mid \im z-Q(v,v)\in\Omega\}.
\end{align*}
\end{definition}
It is known that for $A\in\mathfrak{g}(\Omega)$, if we regard $e^A\in GL(U)$ as a $\mathbb{C}$-linear map from $U_\mathbb{C}$ to itself by the complex linearity, then we have
\begin{equation}\label{eq:232}
e^AQ(v,v')=Q(e^{\beta(A)}v,e^{\beta(A)}v')
\end{equation}
and hence the map
\begin{align*}
t(A):=(e^A,e^{\beta(A)})\in GL(U_\mathbb{C})\times GL(V)
\end{align*}
preserves ${\mathcal{S}(\Omega,Q)}$. 
We shall see some useful equalities for studying the multiplicity-freeness property of group representations and the coisotropicity of group actions.
\begin{lemma}\label{lem:105}
For $x\in U$, $v\in V$, and $A\in\mathfrak{g}(\Omega)$, let
\begin{align*}
\tilde{x}:=e^Ax,\quad \tilde{v}:=e^{-\beta(A)^*}v.
\end{align*}
Then one has
\begin{align*}
e^{\beta(A)}R_xv=R_{\tilde{x}}\tilde{v}.
\end{align*}
\end{lemma}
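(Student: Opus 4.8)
The plan is to reduce the identity to the operator statement
\begin{equation*}
e^{\beta(A)}R_x\,e^{\beta(A)^*}=R_{e^Ax},
\end{equation*}
and then prove this by an ODE/uniqueness argument driven by the relation \eqref{eq:230}. Concretely, writing out $R_{\tilde x}\tilde v=R_{e^Ax}e^{-\beta(A)^*}v$, the asserted equality $e^{\beta(A)}R_xv=R_{\tilde x}\tilde v$ holding for all $v\in V$ is equivalent to $e^{\beta(A)}R_x=R_{e^Ax}e^{-\beta(A)^*}$ as operators on $V$; multiplying on the right by the invertible operator $e^{\beta(A)^*}$ yields the boxed operator identity. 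Note that $e^{tA}\in GL(U)$ keeps $x$ inside $U$ for all real $t$, so $R_{e^{tA}x}=\beta(T_{e^{tA}x})$ is well defined throughout.

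\textbf{Key steps.} First I would record the infinitesimal form of \eqref{eq:230}, namely $R_{Ay}=\beta(A)R_y+R_y\beta(A)^*$ for every $y\in U$ and $A\in\mathfrak{g}(\Omega)$ (this is just \eqref{eq:230} rewritten via $R_x=\beta(T_x)$). Next I would introduce the two $\operatorname{End}(V)$-valued curves
\begin{equation*}
F(t):=e^{t\beta(A)}R_x\,e^{t\beta(A)^*},\qquad G(t):=R_{e^{tA}x}\qquad(t\in\mathbb{R}),
\end{equation*}
both satisfying $F(0)=G(0)=R_x$. Differentiating $F$ by the product rule for matrix exponentials gives $F'(t)=\beta(A)F(t)+F(t)\beta(A)^*$. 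Differentiating $G$ and using $\tfrac{d}{dt}e^{tA}x=Ae^{tA}x$ together with the infinitesimal relation applied to $y=e^{tA}x$ gives $G'(t)=R_{A(e^{tA}x)}=\beta(A)G(t)+G(t)\beta(A)^*$. Thus $F$ and $G$ solve one and the same linear ODE $Y'(t)=\beta(A)Y(t)+Y(t)\beta(A)^*$ with identical initial data, so by uniqueness of solutions of linear ODEs $F\equiv G$; evaluating at $t=1$ gives the operator identity, and hence the lemma after applying both sides to $v$.

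\textbf{Main obstacle.} There is no serious analytic difficulty here; the only genuine content is recognizing the right reformulation. The two points requiring care are (i) the passage from the pointwise-in-$v$ statement to the operator statement, which legitimizes multiplying by $e^{\beta(A)^*}$ and thereby turns the asymmetric expression $e^{\beta(A)}R_xe^{-\beta(A)^*}$ in the conclusion into the symmetric congruence $e^{\beta(A)}R_xe^{\beta(A)^*}$ amenable to the ODE comparison; and (ii) checking that the shared ODE is exactly the exponentiation of \eqref{eq:230}, i.e.\ that the infinitesimal congruence $M\mapsto\beta(A)M+M\beta(A)^*$ on self-adjoint-type operators corresponds under $x\mapsto R_x$ to the infinitesimal action $y\mapsto Ay$ on $U$. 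Both are handled directly by \eqref{eq:230}, so the proof is short once the reformulation is in place.
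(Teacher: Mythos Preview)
Your proof is correct. The paper takes a different, more algebraic route: rather than integrating the infinitesimal relation \eqref{eq:230} via an ODE, it tests both sides against an arbitrary $w\in V$ using $h$, unwinds the definition of $Q$, and invokes the equivariance $e^{{}^tA}Q(v,v')=Q(e^{\beta({}^tA)}v,e^{\beta({}^tA)}v')$ from \eqref{eq:232} together with $\beta({}^tA)=\beta(A)^*$ from \eqref{eq:215} to transport the action of $e^A$ on the $U$-side to that of $e^{\beta(A)^*}$ on the $V$-side. Your ODE comparison is self-contained and reaches the operator identity $e^{\beta(A)}R_xe^{\beta(A)^*}=R_{e^Ax}$ directly from \eqref{eq:230} without passing through $Q$; the paper's argument, by contrast, makes the $R$--$Q$ duality explicit and avoids any appeal to differential equations.
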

\begin{proof}
The result follows from
\begin{align*}\begin{split}
2h(R_{\tilde{x}}\tilde{v},w)&=\langle\tilde{x},Q(\tilde{v},w)\rangle_U
\\&=\langle x,e^{{}^tA}Q(\tilde{v},w)\rangle_U
\\&=\langle x,Q(v,e^{\beta(A)^*}w)\rangle_U
\\&=2h(R_xv,e^{\beta(A)^*}w)
\\&=2h(e^{\beta(A)}R_xv,w)\quad(w\in V).
\end{split}
\end{align*}
Here, for the third equality, we have used \eqref{eq:215} and \eqref{eq:232}.
\end{proof}
For $x\in U$, let $g_x\in \Sym^2((V_\mathbb{R})^*)$ be given by
\begin{align*}
g_x(v_1,v_2):=\langle x,\re Q(v_1,v_2)\rangle_U\quad(v_1,v_2\in V).
\end{align*}
For a vector space $V_0$ over $\mathbb{R}$, a symmetric bilinear form $b$ on $V_0$, and a subspace ${W}\subset V_0$, put
\begin{align*}
{W}^{\perp,\,b}:=\{v\in V_0\mid b(v,w)=0\text{ for all }w\in {W}\}.
\end{align*}
\begin{lemma}\label{lem:111}
For $x\in U$ and $y\in U^\times$, one has
\begin{align*}
(({W}^{\perp,\,g_x})^{\perp,\,g_y})^{\perp,\,g_x}={W}^{\perp,\, g_{P(x)y^{-1}}}.
\end{align*}
\end{lemma}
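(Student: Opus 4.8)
The plan is to transcribe the whole statement into a statement about the self-adjoint operators $R_x$ on the real inner product space $(V_\mathbb{R},\re h)$ and then feed it into the fundamental formula of the Jordan algebra $U$. First I would rewrite $g_x$: since $x\in U$ is real, $g_x(v,v')=\re\langle x,Q(v,v')\rangle_U=2\re h(R_xv,v')$, and because $T_x\in\mathfrak p_0$ forces ${}^tT_x=T_x$, relation \eqref{eq:215} shows $R_x^*=R_x$, so $R_x$ is self-adjoint not only for $h$ but for the real form $\re h$ as well. Consequently, for every $z\in U$ and every subspace $W$,
\[
W^{\perp,\,g_z}=\{v\in V_\mathbb{R}\mid \re h(R_zv,w)=0\ \forall w\in W\}=(R_zW)^{\perp,\,\re h},
\]
the complement now being the ordinary orthogonal complement in $(V_\mathbb{R},\re h)$.

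Next I would iterate this identity. The only place invertibility enters is through $y\in U^\times$, which (as recorded after \eqref{eq:raxy}) makes $R_y$ an invertible self-adjoint operator; for such an operator and any subspace $M$ one has $(R_yM^{\perp,\,\re h})^{\perp,\,\re h}=R_y^{-1}M$, since $v$ lies in the left-hand side iff $R_yv\in(M^{\perp,\,\re h})^{\perp,\,\re h}=M$. Applying the reformulation three times gives
\[
W^{\perp,\,g_x}=(R_xW)^{\perp,\,\re h},\qquad (W^{\perp,\,g_x})^{\perp,\,g_y}=R_y^{-1}R_xW,\qquad ((W^{\perp,\,g_x})^{\perp,\,g_y})^{\perp,\,g_x}=(R_xR_y^{-1}R_xW)^{\perp,\,\re h}.
\]
Since the first identity also yields $W^{\perp,\,g_{P(x)y^{-1}}}=(R_{P(x)y^{-1}}W)^{\perp,\,\re h}$, the lemma reduces to the single operator identity $R_xR_y^{-1}R_x=R_{P(x)y^{-1}}$.

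This last identity is the crux, and I would obtain it from the fact that $\phi:=2R:U\to\mathcal{H}(V,h)$ is a unital Jordan homomorphism. The quadratic representation in $\mathcal{H}(V,h)$ is $B\mapsto ABA$, so $\phi$ intertwines quadratic representations: $\phi(P(x)w)=\phi(x)\phi(w)\phi(x)$ for all $w\in U$. Moreover $\phi(y)=2R_y$ is invertible, and the Jordan identity $P(y)y^{-1}=y$ gives $\phi(y)\phi(y^{-1})\phi(y)=\phi(y)$, whence $\phi(y^{-1})=\phi(y)^{-1}$. Taking $w=y^{-1}$,
\[
2R_{P(x)y^{-1}}=\phi(P(x)y^{-1})=\phi(x)\phi(y)^{-1}\phi(x)=(2R_x)(2R_y)^{-1}(2R_x)=2R_xR_y^{-1}R_x,
\]
so the normalizing factors cancel and $R_{P(x)y^{-1}}=R_xR_y^{-1}R_x$; applying this to $W$ and taking $\perp,\re h$ completes the proof. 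The main obstacle is conceptual rather than computational: one has to recognize the sandwiched product $R_x(\cdot)R_x$ as the image under $\phi$ of the quadratic representation $P(x)$, and $R_y^{-1}$ as the image of the Jordan inverse $y^{-1}$, while keeping the factor-of-two normalizations consistent so that they cancel exactly.
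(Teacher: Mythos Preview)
Your proof is correct and follows essentially the same route as the paper's: both reduce the statement to the operator identity $R_xR_y^{-1}R_x=R_{P(x)y^{-1}}$ by rewriting $g_z$-orthogonals as $\re h$-orthogonals via the self-adjoint operators $R_z$, and then invoke that identity. The paper states the identity (in the form $R_xR_{y^{-1}}R_x=R_{P(x)y^{-1}}$, which differs from yours only by a harmless scalar since subspaces are at stake) without proof, whereas you supply a clean derivation from the fact that $\phi=2R$ is a unital Jordan homomorphism and therefore intertwines quadratic representations; this extra justification is a genuine improvement in completeness.
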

\begin{proof}
For $v\in V$, the following equivalences hold true:
\begin{align*}
v\in({W}^{\perp,\,g_x})^{\perp,\,g_y}
\Leftrightarrow \re h(R_yv,{W}^{\perp,\,g_x})=\{0\}\Leftrightarrow R_yv\in({W}^{\perp,\,g_x})^{\perp,\,\re h}.
\end{align*}
Hence one has
\begin{align*}
({W}^{\perp,\,g_x})^{\perp,\,g_y}=R_{y^{-1}}({W}^{\perp,\,g_x})^{\perp,\,\re h},
\end{align*}
and the following equivalences hold true:
\begin{align*}
v\in(({W}^{\perp,\,g_x})^{\perp,\,g_y})^{\perp,\,g_x}
&\Leftrightarrow \re h(R_xv,({W}^{\perp,\,g_x})^{\perp,\,g_y})=\{0\}
\\&\Leftrightarrow\re h(R_xv,R_{y^{-1}}({W}^{\perp,\,g_x})^{\perp,\, \re h})=\{0\}
\\&\Leftrightarrow \re h(R_{y^{-1}}R_xv,({W}^{\perp,\,g_x})^{\perp,\,\re h})=\{0\}
\\&\Leftrightarrow R_{y^{-1}}R_xv\in {W}^{\perp,\,g_x}
\Leftrightarrow \re h(R_xR_{y^{-1}}R_xv,{W})=\{0\}.
\end{align*}
Whereas we have
\begin{align*}
R_xR_{y^{-1}}R_x=R_{P(x)y^{-1}},
\end{align*}
which gives the desired equality.
\end{proof}
Next proposition plays a fundamental role for the subsequent analysis.
\begin{proposition}\label{prop:109}
For a real subspace ${W}\subset V$, if 
\begin{align*}
\im Q({W}^{\perp,\,g_e},{W}^{\perp,\,g_e})=\{0\},
\end{align*}
then one has
\begin{align*}
\langle x,\im Q({W}^{\perp,\,g_x},{W}^{\perp,\,g_x})\rangle_U=\{0\}\quad(x\in U).
\end{align*}
\end{proposition}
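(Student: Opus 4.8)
The plan is to reduce the assertion for an arbitrary $x\in U$ to the hypothesis, which concerns $g_e$ only, by combining the spectral theorem for the Jordan algebra $U$ with a pseudo-inverse construction. First I would record two normalizations. By \eqref{eq:214} we have $R_e=\tfrac12\Id_V$, so $g_e(v,v')=\langle e,\re Q(v,v')\rangle_U=\re h(v,v')$; thus $W^{\perp,g_e}=W^{\perp,\re h}=:S_0$. Also, taking imaginary parts in $2h(R_xv,v')=\langle x,Q(v,v')\rangle_U$ gives $\langle x,\im Q(v,v')\rangle_U=2\im h(R_xv,v')$ for all $x\in U$. Consequently the hypothesis $\im Q(S_0,S_0)=\{0\}$ implies $\im h(R_ya,b)=0$ for every $y\in U$ and all $a,b\in S_0$. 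Finally, since $g_x(v,w)=2\re h(R_xv,w)$, one has $v\in W^{\perp,g_x}$ if and only if $R_xv\in S_0$.

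Now fix $x\in U$ and $v,v'\in W^{\perp,g_x}$, and set $a:=R_xv$ and $b:=R_xv'$, both in $S_0$; the goal is $\im h(R_xv,v')=0$. I would invoke Theorem \ref{th:228} to write $x=\sum_{k=1}^r\lambda_k c_k$ for a complete system of orthogonal idempotents $c_k$. Using that $u\mapsto 2R_u$ is a unital Jordan homomorphism into $\mathcal{H}(V,h)$ together with the self-adjointness of each $R_{c_k}$, I would check that the operators $P_k:=2R_{c_k}$ are mutually orthogonal self-adjoint projections with $\sum_k P_k=2R_e=\Id_V$. Hence $R_x=\tfrac12\sum_k\lambda_k P_k$ is the spectral resolution of $R_x$, and $\Pi:=\sum_{\lambda_k\neq0}P_k$ is the $h$-orthogonal projection onto the range of $R_x$, whose orthogonal complement is $\ker R_x$.

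Next I would introduce the pseudo-inverse $x^\sharp:=\sum_{\lambda_k\neq0}\lambda_k^{-1}c_k\in U$ and verify, from the orthogonality relations $P_kP_l=\delta_{kl}P_k$, the operator identity $4R_{x^\sharp}R_x=\Pi$. The computation then closes: since $\ker R_x$ and the range of $R_x$ are $h$-orthogonal we have $\im h(R_xv,v')=\im h(R_xv,\Pi v')$, and $\Pi v'=4R_{x^\sharp}R_xv'=4R_{x^\sharp}b$; moving the self-adjoint operator $R_{x^\sharp}$ across $h$ gives $\im h(a,4R_{x^\sharp}b)=4\im h(R_{x^\sharp}a,b)$, which is $0$ by the reformulated hypothesis since $a,b\in S_0$. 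Therefore $\langle x,\im Q(v,v')\rangle_U=2\im h(R_xv,v')=0$.

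I expect the genuine difficulty to lie in the non-invertible case. When $x\in U^\times$ the argument is immediate: $W^{\perp,g_x}=R_x^{-1}S_0$, so one substitutes $v'=R_x^{-1}(R_xv')$ and conjugates the hypothesis by $R_x^{-1}$. When $x\notin U^\times$ this substitution is unavailable, and a density argument from $U^\times$ fails because $\dim W^{\perp,g_y}$ may jump upward as $y\to x$, so the limit of the $W^{\perp,g_y}$ need not exhaust $W^{\perp,g_x}$. Identifying $4R_{x^\sharp}$ with the inverse of $R_x$ on its range via the resolution of identity $\{P_k\}$ is exactly what bypasses this, and the one step meriting care is confirming that $\{2R_{c_k}\}$ is a complete orthogonal family of projections, which rests on the Jordan-homomorphism property and the self-adjointness of the $R_{c_k}$.
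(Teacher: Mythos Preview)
Your proof is correct and takes a genuinely different route from the paper's. Both arguments rest on the spectral decomposition of $x$ (Theorem~\ref{th:228}) and a pseudo-inverse to cover the non-invertible case, but the main mechanisms diverge. The paper first proves the subspace identity of Lemma~\ref{lem:111}, $((W^{\perp,g_x})^{\perp,g_y})^{\perp,g_x}=W^{\perp,g_{P(x)y^{-1}}}$, and then argues by a chain of inclusions of orthogonal complements: from the hypothesis it obtains $jW^{\perp,g_{y^{-1}}}\subset W$ for every $y\in U^\times$, feeds this into the lemma to get $jW^{\perp,g_x}\subset(W^{\perp,g_x})^{\perp,g_{P(x)y}}$, and finally chooses $y\in U^\times$ with $P(x)y=x$ via the spectral theorem. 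You instead translate both hypothesis and conclusion into statements about $\im h(R_\bullet\,\cdot,\cdot)$ on $S_0=W^{\perp,\re h}$, and use the operator identity $4R_{x^\sharp}R_x=\Pi$ to replace $v'$ by $4R_{x^\sharp}b$ and invoke the hypothesis at $y=x^\sharp$. Your argument is shorter and entirely bypasses Lemma~\ref{lem:111}; the paper's argument is more geometric and reuses a lemma that has independent utility elsewhere in the paper. The one technical point you single out---that the $P_k=2R_{c_k}$ are mutually orthogonal projections---is real but easily closed: the Jordan homomorphism gives only $P_kP_l+P_lP_k=0$, but combined with self-adjointness and idempotence this forces $P_kP_l=0$. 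The paper needs and records exactly this fact as well (in the proof of Theorem~\ref{th:103}, via \eqref{eq:229} and \eqref{eq:230}).
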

\begin{proof}
Since
\begin{align*}
\langle y,\re Q(j{W}^{\perp,\,g_e},{W}^{\perp,\,g_e})\rangle_U=\{0\}\quad(y\in U^\times),
\end{align*}
we have
\begin{align*}
j{W}^{\perp,\,g_{y^{-1}}}\subset {W}\quad(y\in U^\times)
\end{align*}
by Lemma \ref{lem:111}.
Furthermore, it follows that for any $x\in U$ and $y\in U^\times$,
\begin{align*}
j{W}^{\perp,\,g_x}&\subset({W}^{\perp,\,g_{y^{-1}}})^{\perp,\,g_x}
\\&\subset (({W}+\ker (g_x))^{\perp,\,g_{y^{-1}}})^{\perp,\,g_x}
\\&=((({W}^{\perp,\,g_x})^{\perp,\,g_x})^{\perp,\,g_{y^{-1}}})^{\perp,\,g_x}=({W}^{\perp,\,g_x})^{\perp,\,g_{P(x)y}}.
\end{align*}
Let $x=\sum_{k=1}^r\lambda_kc_k$ be a decomposition in Theorem \ref{th:228}.
For $1\leq k\leq r$, let
\begin{align*}
\widetilde{\lambda_k}:=\begin{cases}
\lambda_k^{-1}&(\lambda_k\neq 0)\\
1&(\lambda_k=0)\end{cases},\quad y:=\sum_{k=1}^r\widetilde{\lambda_k}c_k.
\end{align*}
Here we may assume that $y\in U^\times$ (see Remark \ref{primitive}).
Then we have $P(x)y=x$.
Hence we obtain
\begin{align*}
j{W}^{\perp,\,g_x}\subset({W}^{\perp,\,g_x})^{\perp,\,g_x},
\end{align*}
which proves the assertion.
\end{proof}

\section{\texorpdfstring{Multiplicity-freeness and the vanishing condition for $\im Q$}{Multiplicity-freeness and the vanishing condition for Im Q}}\label{sect:parametrization}
In this section, we show \ref{condi:mainreal} $\Rightarrow$ \ref{condi:mainmf} of Theorem \ref{th:main} and describe an admissible parametrization of $\ext(\Gamma_{G^W}(\mathcal{S}(\Omega,Q)))$.
\subsection{Description of extremal invariant reproducing kernels}\label{subsect:extremal}
In this subsection, we determine all extremal $G^W$-invariant reproducing kernels.
Such functions are partially labeled by vectors in $U$ by the Kirillov-Bernat mapping.
In Lemmas \ref{lem:203} and \ref{lem:101}, we deduce constraints on the vectors in $U$ using the ideas of the coherent state representations as a guiding concept.
From these ideas, we also obtain a differential equation \eqref{eq:235}, which encodes significant information of the reproducing kernels (see Proposition \ref{prop:202} below), though holomorphic functions over a complex vector subspace remain undetermined.
To determine the functions, we provide a suitable decomposition \eqref{eq:224} of $V$ under the assumption \eqref{condi:key} below, and we get the full expression of the reproducing kernels in Theorem \ref{th:4.4}.
Next, we establish an admissible parametrization of $\ext(\Gamma_{G^W}(\mathcal{S}(\Omega, Q)))$, with particular attention to its continuity, as presented in Corollary \ref{cor:115}.

For $x_0\in U$, $v_0\in V$, let ${\bf n}(x_0,v_0):{\mathcal{S}(\Omega,Q)}\rightarrow{\mathcal{S}(\Omega,Q)}$ be the affine transformation of ${\mathcal{S}(\Omega,Q)}$ defined by
\begin{align*}
{\bf n}(x_0,v_0)(z,v):=(z+x_0+2iQ(v,v_0)+iQ(v_0,v_0), v+v_0),
\end{align*}
and for a real subspace ${W}\subset V$, put
\begin{align*}
G^{W}:=\{{\bf n}(x_0,v_0)\mid x_0\in U, v_0\in {W}\},
\end{align*}
which has a natural structure of a Lie group.
In what follows, $\mathfrak{g}^{W}$ may be naturally identified with $U\oplus {W}$.
Also, we may identify other related vector spaces, such as the complexifications, the dual spaces and so on.
We shall see the group law and some related formulae.
For $(x_1,v_1),(x_2,v_2)\in U\oplus V$, we have
\begin{align*}
{\bf n}(x_1,v_1){\bf n}(x_2,v_2)&={\bf n}(x_1+x_2+\frac{1}{2}[v_1,v_2],v_1+v_2),\\
\exp(x_1,v_1)&={\bf n}(x_1,v_1),\\
\Ad ({\bf n}(x_1,v_1))(x_2,v_2)&=(x_2+[v_1,v_2],v_2),\\
{[v_1,v_2]}&=4\im Q(v_1,v_2).
\end{align*}

Suppose that $(\pi,\mathcal{H})\in\widehat{G^W}$ is realized in $\mathcal{O}(\mathcal{S}(\Omega,Q))$ and corresponds to the coadjoint orbit through $(-\nu)\in(\mathfrak{g}^{W})^*$ by the Kirillov-Bernat map.
Let $K^\mathcal{H}\in\Gamma_{G^{W}}({\mathcal{S}(\Omega,Q)})$ be the reproducing kernel of $\mathcal{H}$.
Put
\begin{align*}
G_1:=\{{\bf n}(x,0)\mid x\in U\}.
\end{align*}
The proof of the next proposition is omitted, as it is essentially the same as that provided in our previous paper.
\begin{proposition}[{\cite[Proposition 4.4]{arashi_multiplicityfree_a}}]\label{prop:234}
One has
\begin{align*}
K_{(ie,0)}^\mathcal{H}(z,v)=e^{i\langle\nu,z\rangle}F(v)\quad((z,v)\in{\mathcal{S}(\Omega,Q)})
\end{align*}
for some $F\in\mathcal{O}(V)$.
\end{proposition}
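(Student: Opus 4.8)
The plan is to exploit the restriction of $\pi$ to the central subgroup $G_1=\{{\bf n}(x,0)\mid x\in U\}$ together with the $G_1$-invariance of the reproducing kernel $K^{\mathcal H}$. Since $G_1$ is normal in $G^W$ and acts on $\mathcal S(\Omega,Q)$ by the real translations ${\bf n}(x,0)\colon(z,v)\mapsto(z+x,v)$, the invariance of $K^{\mathcal H}$ under $G_1$ forces a translation-covariance in the first variable. Concretely, I would first write out what $G_1$-invariance of the kernel means: for all $x\in U$,
\begin{align*}
K^{\mathcal H}\bigl({\bf n}(x,0)(z,v),{\bf n}(x,0)(z',v')\bigr)=K^{\mathcal H}(z,v,z',v'),
\end{align*}
i.e. $K^{\mathcal H}(z+x,v,z'+x,v')=K^{\mathcal H}(z,v,z',v')$.

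Next I would pass to the coadjoint-orbit data. Because $\pi$ corresponds to the orbit through $-\nu$ and $G_1$ lies in the centre of $G^W$, Schur's lemma (or the explicit Kirillov form of $\pi|_{G_1}$) shows that $G_1$ acts on $\mathcal H$ by the unitary character $x\mapsto e^{i\langle\nu,x\rangle}$. Translating this into the reproducing-kernel picture via the defining property $(f,K_w^{\mathcal H})_{\mathcal H}=f(w)$, the vector $K_{(ie,0)}^{\mathcal H}$ is, up to the central character, an eigenvector for the $G_1$-action. Evaluating how $\pi_0({\bf n}(x,0))$ moves the base point $(ie,0)$—namely $(ie,0)\mapsto(ie+x,0)$—and combining with the character relation yields an explicit dependence of $K_{(ie,0)}^{\mathcal H}(z,v)$ on the real translation direction. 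The upshot is that, holding $v$ fixed, the function $z\mapsto K_{(ie,0)}^{\mathcal H}(z,v)$ satisfies a first-order covariance that pins down its dependence on $z$ to the exponential $e^{i\langle\nu,z\rangle}$.

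From there I would separate variables. Define $F$ by
\begin{align*}
F(v):=e^{-i\langle\nu,z\rangle}K_{(ie,0)}^{\mathcal H}(z,v),
\end{align*}
and check, using the covariance established above, that the right-hand side is independent of $z$, so that $F$ is well defined on $V$. Holomorphy of $F$ follows because $K^{\mathcal H}$ is holomorphic in its first pair of arguments $(z,v)$ and $e^{-i\langle\nu,z\rangle}$ is holomorphic in $z$; restricting to a slice at fixed $z=ie$ (or any admissible base point) gives $F\in\mathcal O(V)$, and the $z$-independence then transports holomorphy in $v$ to all of $V$.

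The main obstacle I anticipate is making the passage from the abstract central character of $\pi|_{G_1}$ to the pointwise exponential factor fully rigorous, since one must be careful that the evaluation functional $f\mapsto f(ie,0)$ interacts with the $G^W$-action exactly as the orbit-method character predicts, and that the relevant base point $(ie,0)$ is fixed by the complementary directions in a way compatible with the $Q$-twist appearing in ${\bf n}(x_0,v_0)$. Once the central character is correctly identified as $e^{i\langle\nu,\cdot\rangle}$, the separation-of-variables step and the holomorphy of $F$ are routine. Since the statement is quoted verbatim from \cite[Proposition 4.4]{arashi_multiplicityfree_a} and the group law ${\bf n}(x,0)$ acts by pure translation on the $U_{\mathbb C}$-component, I expect the argument to be short, with the only genuine content being the identification of the central character via the Kirillov--Bernat correspondence.
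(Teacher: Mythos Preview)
The paper does not actually prove this proposition: it states explicitly that the proof is omitted, being essentially the same as in the author's earlier work \cite{arashi_multiplicityfree_a}. Your outline is precisely the natural argument and is correct. Since $G_1$ is central in $G^W$ and $\pi$ is irreducible, Schur's lemma forces $\pi|_{G_1}$ to be a unitary character, which the Kirillov--Bernat correspondence (orbit through $-\nu$) identifies; combined with $\pi_0({\bf n}(x_0,0))f(z,v)=f(z-x_0,v)$ this gives $K^{\mathcal H}_{(ie,0)}(z-x_0,v)=e^{-i\langle\nu,x_0\rangle}K^{\mathcal H}_{(ie,0)}(z,v)$, and holomorphy in $z$ then forces the factor $e^{i\langle\nu,z\rangle}$, after which your definition of $F$ and the argument for its holomorphy on all of $V$ go through.

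One minor sign remark: with the paper's conventions the central character is ${\bf n}(x_0,0)\mapsto e^{-i\langle\nu,x_0\rangle}$, not $e^{+i\langle\nu,x_0\rangle}$ as you wrote; the extra sign coming from $\pi_0(g)f=f(g^{-1}\cdot)$ then yields the stated exponent $e^{i\langle\nu,z\rangle}$. You already flagged this identification as the delicate step, and once the sign is tracked the rest is routine.
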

Let ${P}:={W}\cap j{W}$.
The reproducing kernel $K^\mathcal{H}$ is $G^{W}$-invariant, and hence one has
\begin{equation}\label{eq:233}
d\pi_0(a)K_{(ie,0)}^\mathcal{H}=0
\end{equation}
for $a=q-ijq$ with $q\in {P}$, where we abbreviate $\pi_0|_\mathcal{H}$ to $\pi_0$ and extend the differential representation $d\pi_0$ to a representation of $(\mathfrak{g}^{W})_\mathbb{C}$ by the complex linearity.
Let $\mathfrak{h}_-$ be the complex subalgebra of $\mathfrak{g}_\mathbb{C}$ given by
\begin{equation*}
U_\mathbb{C}\oplus\{q+ijq\mid q\in {P}\}.
\end{equation*}
Define $\widetilde{\nu}\in\left(\mathfrak{g}^W\right)^*$ by
\begin{align*}
\langle\widetilde{\nu},x+v\rangle:=\langle {\nu},x\rangle\quad (x\in U,v\in W).
\end{align*}
By \eqref{eq:233} and Proposition \ref{prop:234}, we see that $f:=K_{(ie,0)}^{\mathcal{H}}$ solves the following equation:
\begin{equation}\label{eq:235}
d\pi_0(a)f=-i\langle \widetilde{\nu},a\rangle\quad (a\in\overline{\mathfrak{h}_-}).
\end{equation}
Let $S$ be a real subspace of $V$ such that
\begin{equation}\label{eq:221}
V={P}\oplus S\oplus jS.
\end{equation}
\begin{remark}\label{decomposition242}
\eqref{eq:221} is satisfied when we have
\begin{align*}
{W}=S\oplus {P},\quad {W}+j{W}=V.
\end{align*}
\end{remark}

Now we shall find a solution of \eqref{eq:235}.
For a $U_\mathbb{C}\times V$-valued function $X$ on ${\mathcal{S}(\Omega,Q)}$, we define an operator $D_X$ on $C^\infty(\mathcal{S}(\Omega,Q))$ by
\begin{align*}
D_Xf_0({\bf z}):=\frac{d}{dt}\Bigr|_{t=0}f_0({\bf z}+tX({\bf z}))\quad({\bf z}\in{\mathcal{S}(\Omega,Q)}).
\end{align*}
For $a=q_0-ijq_0$ with $q_0\in {P}$, the equation \eqref{eq:235} tells us that
\begin{align*}
\left(D_{(-2iQ(v,q_0),-q_0)}-iD_{(-2iQ(v,jq_0),-jq_0)}\right)f(z,v)=0.
\end{align*}
Since $f$ is holomorphic in $z$, and $Q$ is Hermitian, it follows that
\begin{align*}
\left(D_{(0,q_0)}-iD_{(0,jq_0)}\right)f(z,v)=0,
\end{align*}
which implies that
\begin{equation}\label{eq:236}
f(z,q+s)=e^{i\langle\nu,z\rangle}F(q+s)=e^{i\langle\nu,z\rangle}F(s)\quad(q\in {P},s\in S\oplus jS,(z,q+s)\in{\mathcal{S}(\Omega,Q)}).
\end{equation}
Now the $G^W$-invariance of $K^\mathcal{H}$ yields the following proposition.
\begin{proposition}\label{prop:202}
One has
\begin{align*}
K^\mathcal{H}(z,q+s,z',q'+s')=\exp({i\langle\nu,z-\overline{z'}-2iQ(q+s,q'+s')\rangle})F(s-\overline{s'})
\end{align*}
with $q,q'\in{P},s,s'\in S\oplus jS$.
\end{proposition}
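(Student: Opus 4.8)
The plan is to reconstruct the full kernel $K^\mathcal{H}(\mathbf z,\mathbf w)$ from its base-point value \eqref{eq:236} by exploiting the covariance of a reproducing kernel under the $G^W$-action. First I would record the covariance identity: because $\pi_0|_\mathcal{H}$ is unitary and $\pi_0(g)f=f(g^{-1}\cdot)$, one has $\pi_0(g)K_{\mathbf w}^\mathcal{H}=K_{g\cdot\mathbf w}^\mathcal{H}$ for every $g\in G^W$, whence
\[
K^\mathcal{H}(g\cdot\mathbf z,g\cdot\mathbf w)=K^\mathcal{H}(\mathbf z,\mathbf w)\qquad(\mathbf z,\mathbf w\in\mathcal S(\Omega,Q)).
\]

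Next I would compute $K^\mathcal{H}(\mathbf z,\mathbf w)$ when the second argument lies on the orbit $\mathcal O:=G^W\cdot(ie,0)$. Writing $\mathbf w=\mathbf n(x_0,v_0)(ie,0)=(x_0+i(e+Q(v_0,v_0)),v_0)$ with $x_0\in U$ and $v_0=q'+s'\in W$ (so that $q'\in P$ and $s'\in S$), the covariance identity gives $K^\mathcal{H}(\mathbf z,\mathbf w)=K^\mathcal{H}(\mathbf n(-x_0,-v_0)\cdot\mathbf z,(ie,0))$. Substituting $\mathbf n(-x_0,-v_0)(z,q+s)=(z-x_0-2iQ(q+s,v_0)+iQ(v_0,v_0),\,q+s-v_0)$, feeding the result into \eqref{eq:236}, and using that $F$ depends only on the $S\oplus jS$-component of its argument, one obtains exactly the asserted right-hand side restricted to $\mathbf w\in\mathcal O$; here $s'\in S$, so $\overline{s'}=s'$ and $F(s-\overline{s'})=F(s-s')$. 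This is a direct if slightly delicate computation: the Hermitian property of $Q$ and the identity $q'+s'=v_0$ convert the translation terms into $\exp(i\langle\nu,z-\overline{z'}-2iQ(q+s,q'+s')\rangle)$, and some care is needed to track the normalization arising from the base-point value at $(ie,0)$.

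Finally I would pass from $\mathbf w\in\mathcal O$ to arbitrary $\mathbf w\in\mathcal S(\Omega,Q)$ by analytic continuation. Fixing $\mathbf z$, both $K^\mathcal{H}(\mathbf z,\cdot)$ and the proposed right-hand side are anti-holomorphic in $\mathbf w$ and agree on $\mathcal O$, so their difference is anti-holomorphic and vanishes on $\mathcal O$. The orbit $\mathcal O$ is a homogeneous real-analytic submanifold whose tangent space at $(ie,0)$ equals $U\oplus W$; since $W+jW=V$ in the present setting (see Remark \ref{decomposition242}), this tangent space spans $U_\mathbb{C}\oplus V$ over $\mathbb{C}$, so $\mathcal O$ is a generating (CR-generic) submanifold and hence a set of uniqueness for anti-holomorphic functions on the connected domain $\mathcal S(\Omega,Q)$. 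Consequently the difference vanishes identically, proving the proposition.

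I expect this last step to be the main obstacle. Once $P=W\cap jW\neq\{0\}$ the orbit $\mathcal O$ is no longer totally real, so one cannot simply invoke the identity theorem on a maximal totally real submanifold; instead one must verify that $\mathcal O$ is generic and apply the uniqueness of holomorphic continuation from a generating submanifold. The remaining bookkeeping — organizing the translation terms into the stated exponential and confirming that the $P$-component genuinely drops out through $F$ — is routine given \eqref{eq:236} and the Hermitian symmetry of $Q$.
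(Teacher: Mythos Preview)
Your approach is the same as the paper's: exploit the $G^W$-invariance of $K^\mathcal{H}$ to move the second argument to the base point $(ie,0)$, insert \eqref{eq:236}, and then analytically continue in that variable. You spell out the continuation step via CR-genericity where the paper simply writes ``by the analytic continuation''; one small caveat is that Remark~\ref{decomposition242} \emph{assumes} $W+jW=V$ (as a hypothesis yielding \eqref{eq:221}) rather than asserting it, so that citation alone does not justify genericity---though in the paper's actual applications (e.g.\ after \eqref{eq:224}) this condition is in force.
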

\begin{proof}
For $x_0\in U$ and $v_0\in W$, we have
\begin{align*}
{\bf n}(x_0,v_0)(ie,0)&=(ie+x_0+iQ(v_0,v_0),v_0),\\
{\bf n}(-x_0,-v_0)(z,v)&=(z-x_0+2iQ(v,-v_0)+iQ(v_0,v_0),v-v_0).
\end{align*}
Hence we have
\begin{align*}
K^\mathcal{H}(z,q+s,&ie+x_0+iQ(v_0,v_0),v_0)
\\&=K^\mathcal{H}(z-x_0+2iQ(q+s,-v_0)+iQ(v_0,v_0),v-v_0,ie,0)
\\&=\exp({i\langle\nu,z-x_0+2iQ(q+s,-v_0)+iQ(v_0,v_0)\rangle})F(s-s_0),
\end{align*}
where $v_0=q_0+s_0$ with $q_0\in{P}$ and $s_0\in S$.
The last expression equals
\begin{align*}
\exp({i\langle\nu,z-\overline{(x_0+ie+iQ(v_0,v_0)})-2iQ(q+s,v_0)\rangle})F(s-s_0)
\end{align*}
up to a constant.
By the analytic continuation, we get the desired expression.
\end{proof}

We now proceed to establish the decomposition \eqref{eq:224} in order to determine the function $F$ in Proposition \ref{prop:202}.
In the subsequent argument, it will be necessary to choose a suitable complementary subspace of $P$ within $W$, which is orthogonal to $jW$ with respect to $g_\nu$.
We address this problem in two ways.
First, we consider the constraints \eqref{positivity238} and \eqref{eq:113} on $\nu\in U$ below.
Second, we show that if \ref{condi:mainreal} of Theorem \ref{th:main} holds, then the relation $W\cap j W=P$ between $W$, $jW$, and $P$ is preserved when considering their images under the projection from $V$ to $V/\ker(g_x)$ (see Proposition \ref{prop:107}).
Combining these facts leads to the desired decomposition.

We assume that $\mathcal{H}\neq 0$.
Then Proposition \ref{prop:202} tells us that $F\neq 0$, and hence $f\neq 0$.
The following lemma is greatly influenced by the study of coherent state representation.
In what follows, we may abbreviate $\nu|_{\mathfrak{g}_1}$ to $\nu$ if there is no confusion, and regard $\nu\in U^*$ as the vector in $U$ by means of $\langle\cdot,\cdot\rangle_U$.
\begin{lemma}[c.f. {\cite[2. Proposition]{lisiecki_kaehler_1990}}]\label{lem:203}
For ${x}=\nu\in U$, we have
\begin{equation}\label{positivity238}
-i\langle{x},[a,\overline{a}]\rangle_U=8\langle{x},Q(q,q)\rangle_U\geq 0\quad (a=q+ijq,q\in{P}).
\end{equation}
\end{lemma}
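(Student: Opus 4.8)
The plan is to unpack the bracket $[a,\overline a]$ using the explicit Lie algebra structure recorded just above Proposition~\ref{prop:234}, namely the relations $\Ad({\bf n}(x_1,v_1))(x_2,v_2)=(x_2+[v_1,v_2],v_2)$ and $[v_1,v_2]=4\im Q(v_1,v_2)$. Taking $a=q+ijq$ with $q\in P$, I would first compute $\overline a=q-ijq$ (here the conjugation is the natural one on $\mathfrak g_\mathbb C$ fixing the real form $\mathfrak g^W$, so $\overline{ijq}=-ijq$ since $jq\in W$). Then I would evaluate the bracket in $(\mathfrak g^W)_\mathbb C$ by bilinearity: only the $V$-components contribute to $[a,\overline a]$, and this bracket lands in $U_\mathbb C$ via $[v_1,v_2]=4\im Q(v_1,v_2)$ extended $\mathbb C$-bilinearly.

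The key computation is
\begin{align*}
[a,\overline a]&=[q+ijq,\,q-ijq]\\
&=-i[q,jq]+i[jq,q]=-2i[q,jq]=-8i\,\im Q(q,jq),
\end{align*}
using antisymmetry of the bracket and $[q,q]=[jq,jq]=0$. Pairing with $x\in U$ gives
$-i\langle x,[a,\overline a]\rangle_U=-i\langle x,-8i\,\im Q(q,jq)\rangle_U=-8\langle x,\im Q(q,jq)\rangle_U$.
The remaining task is to identify $-\im Q(q,jq)$ with $Q(q,q)$; this follows from the Hermitian (sesquilinearity) property of $Q$, since $Q(q,jq)=Q(q,q)\cdot\overline{(i)}=-iQ(q,q)$ when $jq$ is read as $iq$ in the complex structure, whence $\im Q(q,jq)=-\re Q(q,q)$, and $\re Q(q,q)=Q(q,q)$ because $Q(q,q)\in\cl(\Omega)\subset U$ is real by $\Omega$-positivity. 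This yields the claimed equality $-i\langle x,[a,\overline a]\rangle_U=8\langle x,Q(q,q)\rangle_U$.

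For the inequality, I would invoke the defining property that $\pi$ is a genuine unitary representation realized in $\mathcal O(\mathcal S(\Omega,Q))$ corresponding to the coadjoint orbit through $-\nu$. The quantity $-i\langle\nu,[a,\overline a]\rangle$ is, up to sign conventions, the value of the symplectic/Kirillov form on the orbit paired against $a\wedge\overline a$, and positivity of the invariant Hilbert space inner product forces $-i\langle\widetilde\nu,[a,\overline a]\rangle\ge0$ for $a$ in the polarizing subalgebra $\overline{\mathfrak h_-}$. Concretely, since $f=K^{\mathcal H}_{(ie,0)}\neq0$ solves \eqref{eq:235} and $d\pi_0(a)f=-i\langle\widetilde\nu,a\rangle f$, one computes $\|d\pi_0(\overline a)f\|^2=\langle d\pi_0(a)d\pi_0(\overline a)f,f\rangle$ and rearranges using $[d\pi_0(a),d\pi_0(\overline a)]=d\pi_0([a,\overline a])$ to extract $-i\langle\widetilde\nu,[a,\overline a]\rangle\|f\|^2\ge0$; since $\langle\widetilde\nu,\cdot\rangle$ restricts to $\langle\nu,\cdot\rangle$ on $U$ and $[a,\overline a]\in U_\mathbb C$, this is exactly $-i\langle\nu,[a,\overline a]\rangle_U\ge0$.

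The main obstacle I expect is the positivity argument rather than the algebraic identity: one must justify that the coherent-state vector $f$ lies in the domain of the relevant operators and that the formal manipulation with $d\pi_0$ on $\mathcal O(\mathcal S(\Omega,Q))$ is legitimate. The cleanest route is to cite the coherent-state framework of \cite{lisiecki_kaehler_1990} (as the lemma statement already signals with its c.f.), where positivity of precisely this commutator pairing is the standard necessary condition for a reproducing-kernel Hilbert space carrying the representation to be nonzero; the algebraic identification above then converts that abstract positivity into the stated inequality $8\langle x,Q(q,q)\rangle_U\ge0$, which is anyway independently nonnegative by $\Omega$-positivity of $Q$ together with $x=\nu\in\overline\Omega$.
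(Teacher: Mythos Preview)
Your algebraic verification of the identity $-i\langle x,[a,\overline a]\rangle_U=8\langle x,Q(q,q)\rangle_U$ is correct and more explicit than what the paper writes out; the paper simply records the equality inside \eqref{positivity238} and focuses on deriving the inequality via \eqref{eq:201}.

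For the inequality, your idea is the same as the paper's---positivity comes from a squared norm in $\mathcal H$, and the commutator enters via $d\pi_0(a)^*=-d\pi_0(\overline a)$---but you have the roles of $a$ and $\overline a$ reversed. Equation \eqref{eq:235} is stated for elements of $\overline{\mathfrak h_-}=U_\mathbb C\oplus\{q-ijq\}$, so it is $\overline a=q-ijq$, not $a=q+ijq$, that satisfies $d\pi_0(\overline a)f=-i\langle\widetilde\nu,\overline a\rangle f$; moreover $\langle\widetilde\nu,\overline a\rangle=0$ since $q,jq\in W$, hence $d\pi_0(\overline a)f=0$. With your choice, $\|d\pi_0(\overline a)f\|^2=0$ is vacuous and the commutator never appears. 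The correct computation is
\[
\|d\pi_0(a)f\|^2=-\bigl(d\pi_0(\overline a)d\pi_0(a)f,f\bigr)=\bigl(d\pi_0([a,\overline a])f,f\bigr)-\bigl(d\pi_0(a)d\pi_0(\overline a)f,f\bigr)=-i\langle\nu,[a,\overline a]\rangle_U\,\|f\|^2,
\]
using $d\pi_0(\overline a)f=0$ and $[a,\overline a]\in U_\mathbb C\subset\overline{\mathfrak h_-}$. This is exactly the content of the paper's formula \eqref{eq:201}: the Fubini--Study norm there reduces to $\|d\pi_0(a)f\|^2/\|f\|^2$ because $(f,d\pi_0(a)f)=-(d\pi_0(\overline a)f,f)=0$, so the geometric packaging adds interpretation but not substance beyond what you intend.

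Finally, drop the closing remark that the inequality is ``anyway independently nonnegative by $\Omega$-positivity of $Q$ together with $x=\nu\in\overline\Omega$''. At this stage nothing guarantees $\nu\in\cl(\Omega)$; indeed Lemmas~\ref{lem:203} and~\ref{lem:101} are precisely the constraints on $\nu$ extracted from the existence of the nonzero Hilbert space, and they feed into the definition of $\Lambda$. Assuming $\nu\in\cl(\Omega)$ here would be circular.
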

\begin{proof}
Let us consider the projective space $\mathbb{P}(\mathcal{H})$ and the natural projection $p_f$ between the tangent spaces $T_f\mathcal{H}$ and $T_{[f]}\mathbb{P}(\mathcal{H})$.
The Fubini-Study metric on $T_{[f]}\mathbb{P}(\mathcal{H})$ satisfies
\begin{align*}
\|dp_f(f_0)\|_{FS}^2=\frac{\|f\|_\mathcal{H}^2\|f_0\|_\mathcal{H}^2-|(f,f_0)_\mathcal{H}|^2}{\|f\|_\mathcal{H}^4},
\end{align*}
and \eqref{positivity238} can be derived from \eqref{eq:235} and its consequence:
\begin{equation}\label{eq:201}
\begin{split}
\|dp_f(d\pi_0(a)f)\|_{FS}^2=\frac{(d\pi_0([a,\overline{a}])f,f)_\mathcal{H}}{\|f\|_\mathcal{H}^2}=-i\langle\nu,[a,\overline{a}]\rangle_U\quad(a=q+ijq,q\in{P}).
\end{split}\end{equation}
\end{proof}
For $x\in U$, define a complex linear subspace $N_{x}\subset P$ by
\begin{align*}
N_{x}:=\{q\in{P}\mid g_{x}(q,q)=0\}.
\end{align*}
\begin{lemma}\label{lem:101}
For ${x}=\nu\in U$, one has
\begin{equation}\label{eq:113}
N_{x}\subset\ker (g_{x}).
\end{equation}
\end{lemma}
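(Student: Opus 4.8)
The plan is to exploit the coherent-state rigidity already visible in the proof of Lemma~\ref{lem:203}. By that lemma the form $g_x|_P$ is positive semi-definite, and $N_x$ is precisely its degeneracy locus, i.e. $g_x(q,q)=\langle x,Q(q,q)\rangle_U=0$ for $q\in N_x$. The assertion $N_x\subset\ker(g_x)$ says that this \emph{diagonal} degeneracy forces $q$ into the full radical $\ker(g_x)=\{v\in V\mid g_x(v,\cdot)=0\}$ of $g_x$ on all of $V$, not merely into the radical of $g_x|_P$. I would deduce this by first showing that $d\pi_0(q+ijq)f=0$, where $f:=K_{(ie,0)}^{\mathcal H}$, and then translating this annihilation into a pointwise identity for $Q$.

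First I would fix $q\in N_x$ and set $a:=q+ijq\in\mathfrak h_-$, so that $\overline a=q-ijq\in\overline{\mathfrak h_-}$ has trivial $U_\mathbb{C}$-component. Since $q,jq\in P\subset W$ and $\widetilde\nu$ vanishes on $W$, equation~\eqref{eq:235} gives $d\pi_0(\overline a)f=0$, whence $(d\pi_0(a)f,f)_{\mathcal H}=-(f,d\pi_0(\overline a)f)_{\mathcal H}=0$ by skew-symmetry of the unitary derivative. On the other hand, \eqref{eq:201} together with Lemma~\ref{lem:203} yields $\|dp_f(d\pi_0(a)f)\|_{FS}^2=8\langle x,Q(q,q)\rangle_U=0$ because $q\in N_x$, so $d\pi_0(a)f$ lies in the kernel $\mathbb{C}f$ of $dp_f$. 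Being simultaneously orthogonal to $f$ and proportional to $f$, it must vanish: $d\pi_0(a)f=0$.

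It then remains to unwind this condition. Extending $d\pi_0$ complex-linearly, $d\pi_0(a)=D_{(-2iQ(v,q),-q)}+iD_{(-2iQ(v,jq),-jq)}$; since $j$ acts as multiplication by $i$ on $V$, the $V$-component $-q-i\,jq$ of the combined holomorphic vector field vanishes, while the conjugate-linearity $Q(v,jq)=-iQ(v,q)$ collapses the $U_\mathbb{C}$-component to $-4iQ(v,q)$. Hence $d\pi_0(a)=D_{(-4iQ(v,q),0)}$, and applying it to $f(z,v)=e^{i\langle\nu,z\rangle}F(v)$ (Proposition~\ref{prop:234}) gives $d\pi_0(a)f=4\langle\nu,Q(v,q)\rangle_U f$. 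As $f\not\equiv 0$, this forces $\langle\nu,Q(v,q)\rangle_U=0$ for every $v\in V$; taking real parts yields $g_x(v,q)=\langle x,\re Q(v,q)\rangle_U=0$ for all $v$, that is $q\in\ker(g_x)$.

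I expect the crux to be the middle step, namely upgrading the scalar vanishing $\langle x,Q(q,q)\rangle_U=0$ to the vector identity $d\pi_0(a)f=0$. This rigidity is special to the borderline case where the positivity of Lemma~\ref{lem:203} is saturated, and it genuinely uses both the Fubini--Study identity \eqref{eq:201} and the annihilation $d\pi_0(\overline a)f=0$ from \eqref{eq:235}. Once this is in hand, the only point requiring care is the bookkeeping of the two cancellations (the complex structure killing the $V$-direction, the Hermitian symmetry fixing the $U_\mathbb{C}$-direction), after which the conclusion is immediate.
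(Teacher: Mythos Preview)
Your argument is correct and follows essentially the same route as the paper: both use the Fubini--Study identity \eqref{eq:201} to conclude that $d\pi_0(a)f$ is a scalar multiple of $f$ when $q\in N_x$, then compute $d\pi_0(a)f=4\langle\nu,Q(v,q)\rangle_U f$ explicitly and deduce $\langle\nu,Q(\cdot,q)\rangle_U\equiv 0$. The only difference is cosmetic: you pin the scalar down to zero via the orthogonality $(d\pi_0(a)f,f)_{\mathcal H}=-(f,d\pi_0(\overline a)f)_{\mathcal H}=0$, whereas the paper leaves the scalar unspecified and instead argues that $v\mapsto\langle\nu,Q(v,q)\rangle_U$ must be constant (else $F\equiv 0$ on an open set), hence identically zero by linearity.
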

\begin{proof}
When $\langle\nu,Q(q_0,q_0)\rangle_U=0$ with $q_0\in{P}$, we see from \eqref{eq:201} that
\begin{align*}
d\pi_0(a_0)f(z,v)=\zeta f(z,v)\quad (a_0=q_0+ijq_0)
\end{align*}
for some $\zeta\in\mathbb{C}$.
By \eqref{eq:236}, we have
\begin{align*}
d\pi_0(a_0)f(z,v)=4\langle\nu,Q(v,q_0)\rangle_U f(z,v),
\end{align*}
and hence
\begin{equation}\label{eq:239}
(-4\langle\nu,Q(v,q_0)\rangle_U+\zeta)F(s)=0.
\end{equation}
Suppose that
\begin{align*}
-4\langle\nu,Q(v_0,q_0)\rangle_U+\zeta\neq 0
\end{align*}
for some $v_0\in V$.
Then there exists an open subset $\widetilde{V}\subset V$ such that
\begin{align*}
-4\langle\nu,Q(v,q_0)\rangle_U+\zeta\neq 0\quad(v\in \widetilde{V}),
\end{align*}
and hence \eqref{eq:239} implies that $F(s)\equiv0$, and contradicts that $\mathcal{H}\neq \{0\}$.
Hence the assertion holds.
\end{proof}
For $x\in U$, $v\in V$, and a real subspace $V_0\subset V$, define
\begin{align*}
[v]_x:&=v+\ker (g_x)\quad\in V/\ker (g_x),\\
{[V_0]_x}:&=V_0+\ker (g_x)/\ker (g_x)\quad\subset V/\ker (g_x).
\end{align*}
Then we have the following lemma.
\begin{lemma}\label{lem:241}
One has $[{P}]_{\nu}\cap[jW^{\perp,\,g_{\nu}}]_{\nu}=\left\{\ker (g_{\nu})\right\}$.
\end{lemma}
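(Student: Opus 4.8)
The plan is to take an arbitrary element of the intersection $[P]_\nu\cap[jW^{\perp,\,g_\nu}]_\nu$ and show that it must be the zero class $\ker(g_\nu)$. Such an element can be written both as $[q]_\nu$ for some $q\in P$ and as $[u]_\nu$ for some $u\in jW^{\perp,\,g_\nu}$. My first step is to promote this equality of classes in $V/\ker(g_\nu)$ to a statement about $q$ alone. Since $[q]_\nu=[u]_\nu$ means $q-u\in\ker(g_\nu)$, and since $\ker(g_\nu)=V^{\perp,\,g_\nu}$ is contained in $jW^{\perp,\,g_\nu}$, I can write $q=u+(q-u)$ as a sum of two elements of $jW^{\perp,\,g_\nu}$, whence $q\in P\cap jW^{\perp,\,g_\nu}$.

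Next I would exploit the inclusion $P=W\cap jW\subset jW$. Because $q$ is $g_\nu$-orthogonal to all of $jW$, it is in particular $g_\nu$-orthogonal to $P$, and pairing $q$ with itself gives $g_\nu(q,q)=0$, so $q\in N_\nu$. The decisive ingredient is then Lemma \ref{lem:101}, which asserts $N_\nu\subset\ker(g_\nu)$; this immediately yields $q\in\ker(g_\nu)$, hence $[q]_\nu=\ker(g_\nu)$, completing the argument.

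I do not expect any serious obstacle in this lemma itself: once Lemmas \ref{lem:203} and \ref{lem:101} are in place, the statement reduces to elementary linear algebra in the quotient $V/\ker(g_\nu)$. The only point requiring care is the passage from the equality of classes to the membership $q\in jW^{\perp,\,g_\nu}$, which rests on the simple but essential observation $\ker(g_\nu)\subset jW^{\perp,\,g_\nu}$. The genuine content of the lemma is entirely carried by the inclusion $N_\nu\subset\ker(g_\nu)$, which in turn comes from the coherent-state positivity encoded in Lemma \ref{lem:101}; the present lemma is essentially a bookkeeping consequence of that fact, arranged so as to feed into the desired decomposition \eqref{eq:224} of $V$.
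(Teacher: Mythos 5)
Your argument is correct and coincides with the paper's own proof: both reduce an element of the intersection to an element of $P\cap jW^{\perp,\,g_{\nu}}$ via the inclusion $\ker(g_{\nu})\subset jW^{\perp,\,g_{\nu}}$, observe that $P\subset jW$ forces such an element into $N_{\nu}$, and conclude with Lemma \ref{lem:101}. The only difference is cosmetic (the paper works with the representative from the $jW^{\perp,\,g_{\nu}}$ side, you with the one from the $P$ side), so nothing further is needed.
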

\begin{proof}
If $v\in jW^{\perp,\,g_{\nu}}$ satisfies $[v]_{\nu}\in[{P}]_{\nu}$, then there exsists $w\in\ker (g_{\nu})$ such that 
\begin{align*}
v-w\in{P}\cap jW^{\perp,\,g_{\nu}},
\end{align*}
and we see from Lemma \ref{lem:101} that $v\in\ker (g_{\nu})$, which proves the assertion.
\end{proof}

The following theorem is a key component in the proof of the next lemma and proposition.
\begin{theorem}\label{th:103}
For any $x\in U$, $v\in V$, and a real subspace $S\subset V$ satisfying $\im Q(S,S)=\{0\}$, suppose that $R_xv\in S\oplus jS$, then there exists $w\in V$ such that $\overline{R_xv}^S=R_xw$.
\end{theorem}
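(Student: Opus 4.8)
The plan is to recast the conclusion as a membership statement and then diagonalize $R_x$. Since $R_xV$ is by definition the set of all values $R_xw$, producing $w\in V$ with $\overline{R_xv}^S=R_xw$ is the same as proving $\overline{R_xv}^S\in R_xV$. To get a usable description of $R_xV$, I would first recall that $R_x=\beta(T_x)$ lies in $\mathcal{H}(V,h)$, i.e. is $h$-self-adjoint, and then apply Theorem \ref{th:228} to write $x=\sum_{k=1}^r\lambda_kc_k$ with $c_1,\dots,c_r$ a complete system of orthogonal idempotents. Because $u\mapsto 2R_u$ is a unital Jordan algebra homomorphism into $\mathcal{H}(V,h)$, each $p_k:=2R_{c_k}$ is a self-adjoint idempotent, the relations $c_kc_l=0$ give $p_kp_l=0$ for $k\neq l$, and $\sum_kp_k=2R_e=\Id_V$. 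This produces an $h$-orthogonal decomposition $V=\bigoplus_kV_k$ with $V_k:=p_kV$, on which $R_x$ acts as the scalar $\lambda_k/2$; hence $\ker R_x=\bigoplus_{\lambda_k=0}V_k$ and $R_xV=\bigoplus_{\lambda_k\neq 0}V_k$.

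Next I would feed the hypothesis into this decomposition at the level of components. The condition $\im Q(S,S)=\{0\}$ is equivalent to $\re h(R_ys,js')=0$ for all $y\in U$ and $s,s'\in S$, using $2h(R_ys,s')=\langle y,Q(s,s')\rangle_U$ together with the standard identity $\re h(\cdot,j\,\cdot)=\pm\im h(\cdot,\cdot)$. Specializing to $y=c_k$, and using that $p_k$ commutes with $j$ and is $\re h$-self-adjoint, I obtain $\re h(p_ks,j\,p_ks')=\re h(p_ks,js')=0$, so the real subspace $S_k:=p_kS\subset V_k$ satisfies $S_k\perp_{\re h}jS_k$. This forces $\im h|_{S_k}=0$ and in particular $S_k\cap jS_k=\{0\}$, so each sum $S_k\oplus jS_k$ is direct. (The same computation with $y=e$ gives $S\cap jS=\{0\}$, so that $\overline{\,\cdot\,}^S$ is well defined on $S\oplus jS$ to begin with.)

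Finally I would run the punchline. Write $R_xv=a+jb$ with $a,b\in S$, so that $\overline{R_xv}^S=a-jb$, and project onto $V_k$ to get $p_k(R_xv)=p_ka+j\,p_kb\in S_k\oplus jS_k$. For every index $k$ with $\lambda_k=0$ one has $V_k\subset\ker R_x$, whence $p_ka+j\,p_kb=p_k(R_xv)=0$; by the directness of $S_k\oplus jS_k$ this yields $p_ka=0$ and $p_kb=0$ \emph{separately}. Consequently $p_k(\overline{R_xv}^S)=p_ka-j\,p_kb=0$ for every such $k$, i.e. $\overline{R_xv}^S\in\bigoplus_{\lambda_k\neq 0}V_k=R_xV$, which is exactly the membership the reduction calls for.

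The main obstacle is conceptual rather than computational: the conjugation $\overline{\,\cdot\,}^S$ is defined only on $S\oplus jS$, and there is no a priori reason the image $R_xV$ should be stable under it. The whole argument hinges on upgrading the single relation $p_ka+j\,p_kb=0$ to the two separate vanishings $p_ka=0$ and $p_kb=0$, which is precisely what lets one discard the kernel blocks after applying $\overline{\,\cdot\,}^S$; and that separation is exactly where the hypothesis $\im Q(S,S)=\{0\}$ is used, through the directness $S_k\cap jS_k=\{0\}$ on each eigenblock.
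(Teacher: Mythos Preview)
Your proof is correct and follows essentially the same approach as the paper: both arguments apply the Jordan spectral decomposition $x=\sum_k\lambda_kc_k$, set $p_k=2R_{c_k}$ to obtain an $h$-orthogonal splitting $V=\bigoplus_kV_k$, and then show that for every index with $\lambda_k=0$ the two components of $R_xv=s_1+js_2$ vanish separately under $p_k$. The only cosmetic difference is that the paper packages the kernel blocks together via $x'=\sum_{\lambda_k=0}c_k$ and derives $\langle x',Q(s_m,s_m)\rangle_U=0$ to force $p_ks_m=0$, whereas you isolate the cleaner block-wise statement $S_k\cap jS_k=\{0\}$ and use it directly; both routes rest on the same positive-definiteness of $h$ combined with $\im Q(S,S)=\{0\}$.
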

\begin{proof}
Let $x=\sum_{k=1}^r\lambda_kc_k$ be the decomposition in Theorem \ref{th:228}.
Letting $p_k:=2R_{c_k}\,(1\leq k\leq r)$, we have
\begin{align*}
p_1+p_2+\cdots +p_r={\Id}_V,\quad p_kp_l=\delta_{kl}p_k\quad(1\leq k,l\leq r),
\end{align*}
where the latter equality can be seen from \eqref{eq:229} and \eqref{eq:230} with $a$, $b$, $c$ replaced by $c_k$, $c_k$, $c_l$, respectively.
We may assume that $\lambda_k=0$ if and only if $1\leq k\leq r_0$, and put
\begin{align*}
{x'}:=\sum_{k=1}^{r_0}c_k,\quad R_xv=s_1+js_2
\end{align*}
with $s_1,s_2\in S$.
Then we have
\begin{align*}
\langle{x'},Q(s_1+js_2,s_m)\rangle_U=2h(R_{{x'}}R_xv,s_m)=0\quad(m=1,2).
\end{align*}
Since $\im Q(S,S)=\{0\}$, this implies that
\begin{align*}
\begin{split}
0=\langle{x'},Q(s_m,s_m)\rangle_U=2h(R_{{x'}}s_m,s_m)=h\left(\sum_{k=1}^{r_0}p_ks_m,s_m\right)=\sum_{k=1}^{r_0}h(p_ks_m,p_ks_m),
\end{split}
\end{align*}
and hence $p_ks_m=0\,(1\leq k\leq r_0,m=1,2)$.
Now if we put
\begin{align*}
w_m:=2\sum_{k=r_0+1}^r\lambda_k^{-1}p_ks_m\quad(m=1,2),
\end{align*}
then we have
\begin{align*}
s_m=\sum_{k=r_0+1}^rp_ks_m=R_xw_m,
\end{align*}
and thus
\begin{align*}
\overline{R_xv}^S=s_1-js_2=R_x(w_1-jw_2),
\end{align*}
which completes the proof.
\end{proof}
Put $S:=jW^{\perp,\,\re h}$.
For ${x}\in U$, let
\begin{align*}
S_{x}:=jW^{\perp,\,g_{x}}\cap W.
\end{align*}
We may use the notation $\mathfrak{g}^W({x}):=\mathfrak{g}_1\oplus S_{x}$, which is consistent with the one in the orbit method if we regard $x$ as an element of ${(\mathfrak{g}^W)}^*$ by means of $\langle\cdot,\cdot\rangle_U$.
From here until the end of the next subsection, we assume that
\begin{align}\label{condi:key}
\im Q(S,S)=\{0\}.
\end{align}
\begin{lemma}\label{lem:106}
Let $s\in S\oplus jS$ and $x\in U$.
If there exists $q\in{P}$ such that $s+q\in\ker (g_x)$, then there exists $q'\in{P}$ such that $\overline{s}^S+q'\in\ker (g_x)$.
\end{lemma}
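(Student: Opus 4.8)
The plan is to mimic the structure used in Theorem \ref{th:103}, exploiting the spectral decomposition of $x$ together with the vanishing condition \eqref{condi:key} to ``conjugate'' the membership $s+q\in\ker(g_x)$ by complex conjugation on the $S$-component. First I would write $s=s_1+js_2$ with $s_1,s_2\in S$, so that $\overline{s}^S=s_1-js_2$, and decompose $x=\sum_{k=1}^r\lambda_k c_k$ as in Theorem \ref{th:228}, setting $p_k:=2R_{c_k}$, which form orthogonal projections summing to the identity as established in the proof of Theorem \ref{th:103}. Relabel so that $\lambda_k=0$ exactly for $1\le k\le r_0$, and put $x':=\sum_{k=1}^{r_0}c_k$; then $R_{x'}$ is the orthogonal projection onto the kernel directions and $\ker(g_x)=\{v\mid R_xv=0\}$ can be analyzed componentwise.

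The key step is to translate $s+q\in\ker(g_x)$ into $R_x s=-R_x q$, and then to show that applying conjugation to the $S$-part is compatible with the image of $R_x$. Concretely, from $s+q\in\ker(g_x)$ I get $R_x(s_1+js_2+q)=0$. The aim is to produce $q'\in P$ with $R_x(s_1-js_2+q')=0$. The natural candidate is to keep the genuinely nonzero spectral components: since $\im Q(S,S)=\{0\}$, the argument in Theorem \ref{th:103} shows that the projections $p_k s_m$ vanish for $1\le k\le r_0$ (the kernel block), so $s_1,s_2$ live entirely in $\bigoplus_{k>r_0}p_k V$; this means $R_x s_1,R_x s_2$ are recoverable and, crucially, the sign flip $js_2\mapsto -js_2$ can be matched by an element of $P$ using that $P=W\cap jW$ is $j$-stable and that $R_x$ commutes appropriately with $j$ on the relevant blocks. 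I would apply Theorem \ref{th:103} itself to the vector $s$ (or to $js_2$) to extract $w\in V$ with $\overline{R_xs}^S=R_xw$, and then project $w$ suitably into $P$.

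The delicate point — and the main obstacle I anticipate — is producing $q'$ that genuinely lies in $P=W\cap jW$ rather than in some larger subspace of $V$; Theorem \ref{th:103} only yields an ambient $w\in V$. To control this I would combine the identity $\overline{R_xs}^S=R_xw$ with the starting relation $R_x q=-R_x s$, so that $R_x(\overline{s}^S)=\overline{R_xs}^S\cdot(\text{sign data})$ matches $-R_x$ applied to a conjugate of $q$; since $q\in P$ and $P$ is $j$-invariant while $S\oplus jS$ is complementary to $P$ by \eqref{eq:221}, the conjugation $\overline{\cdot}^S$ fixes $P$-components in the appropriate sense, allowing me to take $q'$ built from $q$ and $w$ and check it lands in $P$ using Lemma \ref{lem:101} (equivalently Lemma \ref{lem:241}) to absorb any stray component into $\ker(g_x)$.

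The remaining steps are routine: verify $R_x(\overline{s}^S+q')=0$ by a direct computation on the spectral blocks, using $p_k s_m=0$ for $k\le r_0$ and the commutation of the $p_k$ with $j$ restricted to $P$, which yields $\overline{s}^S+q'\in\ker(g_x)$ and completes the proof. I expect the bookkeeping of which block each term lives in, and the verification that the constructed $q'$ respects both the $W$- and $jW$-membership defining $P$, to be the only genuinely nontrivial part; everything else follows the template of Theorem \ref{th:103}.
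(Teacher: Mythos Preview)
Your plan has a genuine gap at the point where you invoke Theorem~\ref{th:103}. That theorem has the hypothesis $R_xv\in S\oplus jS$, and its conclusion produces $w$ with $R_xw=\overline{R_xv}^S$. You propose to apply it with the role of $R_xv$ played by $R_xs$ in order to obtain $\overline{R_xs}^S=R_xw$. But from $s+q\in\ker(g_x)$ you only get $R_xs=-R_xq\in R_xP$, and there is no reason for $R_xq$ to lie in $S\oplus jS$; in fact the expression $\overline{R_xs}^S$ is not even defined unless $R_xs\in S\oplus jS$. The same problem infects your earlier claim that $p_ks_m=0$ for $1\le k\le r_0$: in the proof of Theorem~\ref{th:103} that conclusion came from $R_{x'}R_xv=0$, which used that $s_1+js_2$ was already in the image of $R_x$. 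Here $s\in S\oplus jS$ is arbitrary, so those projections need not vanish. Finally, your appeal to Lemmas~\ref{lem:101} and~\ref{lem:241} is misplaced: those lemmas concern the specific linear form $\nu$ attached to a representation realized in $\mathcal{O}(\mathcal{S}(\Omega,Q))$ and say nothing about a general $x\in U$, which is what Lemma~\ref{lem:106} requires.

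The paper avoids the problem of constructing $q'$ altogether by passing to a dual formulation. Using $\ker(g_x)=\ker R_x$ and $P^{\perp,\,\re h}=S\oplus jS$, the existence of $q'\in P$ with $s'+q'\in\ker(g_x)$ is shown to be equivalent to the orthogonality condition: for every $v\in V$ with $R_xv\in S\oplus jS$, one has $h(R_xs',v)=0$. With this reformulation, Theorem~\ref{th:103} is applied not to $s$ but to the \emph{test vector} $v$, where the hypothesis $R_xv\in S\oplus jS$ holds by assumption; this yields $w$ with $R_xw=\overline{R_xv}^S$, and then a short chain of conjugation identities using \eqref{eq:240} gives $h(R_x\overline{s}^S,v)=\overline{h(R_xs,w)}=0$. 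The key idea you are missing is this duality step: shift the conjugation from $s$ to the test vector so that Theorem~\ref{th:103} becomes applicable.
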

\begin{proof}
Note that
\begin{align*}
v\in\ker (g_x)\Leftrightarrow R_xv=0
\end{align*}
for $v\in V$.
So the following conditions for $s'\in S\oplus jS$ are equivalent:
\begin{enumerate}[label=({\roman*})]
\item
There exists $q'\in{P}$ such that $s'+q'\in\ker (g_x)$;
\item\label{cond:(ii)}
$h(R_xs',v)=0\text{ for all }v\in(R_x{P})^{\perp,\,\re h}$.
\end{enumerate}
Since
\begin{align*}
{P}^{\perp,\,\re h}=(W\cap jW)^{\perp,\,\re h}=W^{\perp,\,\re h}+jW^{\perp,\,\re h}=S\oplus jS,
\end{align*}
\ref{cond:(ii)} is equivalent to
\begin{enumerate}[label=({\roman*}'),start=2]
\item\label{cond:ii'}
For any $v\in V$, if $R_xv\in S\oplus jS$, then $h(R_xs',v)=0$.
\end{enumerate}
Now suppose that we have \ref{cond:ii'} with $s'=s$, and $R_xv\in S\oplus jS$.
Then by Theorem \ref{th:103}, we can find $w\in V$ such that
\begin{align*}
R_xw=\overline{R_xv}^S\in S\oplus jS,
\end{align*}
and hence $h(R_xs,w)=0$.
Since $\langle e,\im Q(S,S)\rangle_U=\{0\}$, we have
\begin{equation}\label{eq:240}
\re h(jS,S)=\{0\},
\end{equation}
and hence
\begin{align*}
h(R_x\overline{s}^S,v)&=h(\overline{s}^S,R_xv)
\\&=h(\overline{s}^S,\overline{R_xw}^S)
\\&=h(R_xw,s)
\\&=h(w,R_xs)=\overline{h(R_xs,w)}=0,
\end{align*}
which completes the proof.
\end{proof}
Note that \eqref{eq:240} shows
\begin{align*}
S\subset jS^{\perp,\,\re h}=W.
\end{align*}
Take a real form $P_0$ of $P$.
In view of the Remark \ref{decomposition242}, the subspace
\begin{align*}
W_0:=S+P_0\subset V
\end{align*}
is a real form of $V$.
For $v\in V$, let us denote
\begin{align*}
\re v:=\frac{1}{2}(v+\overline{v}^{W_0}),\quad \im v:=-\frac{1}{2}j(v-\overline{v}^{W_0}).
\end{align*}
Then $v\in W$ implies that $\im v\in {P}$.
\begin{proposition}\label{prop:107}
One has $[jW]_x\cap[W]_x=[{P}]_x\,(x\in U)$.
\end{proposition}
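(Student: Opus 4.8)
The inclusion $[P]_x\subseteq[jW]_x\cap[W]_x$ is immediate, since $P=W\cap jW$ sits inside both $W$ and $jW$, so the whole content is the reverse inclusion $[jW]_x\cap[W]_x\subseteq[P]_x$. The plan is to reduce this to a single membership relation modulo $\ker(g_x)$ and then to symmetrize it using Lemma \ref{lem:106}. I would first record the orthogonal decomposition $V=jW\oplus S$, which holds because $\re h$ is positive definite and $S=jW^{\perp,\,\re h}$; combined with the inclusion $S\subseteq W$ coming from \eqref{eq:240}, this yields $W=P\oplus S$ and the decomposition $V=P\oplus S\oplus jS$ of \eqref{eq:221}, together with $jP=P$ (as $P$ is a complex subspace).

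Now take $[v]_x\in[W]_x\cap[jW]_x$. Choosing representatives, I can write $[v]_x=[w_1]_x=[jw_2]_x$ with $w_1,w_2\in W$, so that $w_1-jw_2\in\ker(g_x)$. Using $W=P\oplus S$, I decompose $w_i=p_i+s_i$ with $p_i\in P$, $s_i\in S$; then $q:=p_1-jp_2\in P$ and $s:=s_1-js_2\in S\oplus jS$ satisfy $s+q\in\ker(g_x)$. At this point Lemma \ref{lem:106} applies and produces $q'\in P$ with $\overline{s}^S+q'\in\ker(g_x)$. Since $\overline{s}^S=s_1+js_2$, adding the two relations gives $2s_1+(q+q')\in\ker(g_x)$, so that $[s_1]_x=-\tfrac12[q+q']_x\in[P]_x$. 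Consequently $[v]_x=[w_1]_x=[p_1]_x+[s_1]_x\in[P]_x$, which finishes the proof.

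The crux, and the only nontrivial step, is the passage from the single relation $s+q\in\ker(g_x)$ to control over the genuine $S$-component $s_1$, effected by invoking Lemma \ref{lem:106} and the conjugation $\overline{s}^S$. On its own, $s+q\in\ker(g_x)$ only says that the mixed vector $s=s_1-js_2$ is $P$-equivalent to $0$ modulo $\ker(g_x)$, which does not place $s_1$ in $[P]_x$; it is precisely the symmetrization $s+\overline{s}^S=2s_1$ that cancels the $jS$-part and isolates the $S$-part. I therefore expect the main work to lie entirely in the fact that \eqref{condi:key}, i.e.\ $\im Q(S,S)=\{0\}$, is genuinely used through Lemma \ref{lem:106}, the remaining manipulations being the routine bookkeeping indicated above.
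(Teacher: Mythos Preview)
Your proof is correct and follows the same strategy as the paper's: both apply Lemma~\ref{lem:106} to a suitable element of $\ker(g_x)$ and then add the resulting relation to the original one so as to cancel the $jS$-part and isolate the $S$-component, landing in $[P]_x$. The only cosmetic difference is that the paper introduces an auxiliary real form $W_0=S+P_0$ of $V$ (with $P_0$ a real form of $P$) and phrases the symmetrization as conjugation $\overline{\,\cdot\,}^{W_0}$ of the whole kernel element, whereas you work directly with $W=P\oplus S$ and conjugate only the $S\oplus jS$-part via $\overline{\,\cdot\,}^{S}$; your execution is slightly more economical but otherwise identical in content.
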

\begin{proof}
First we note that the following relation holds:
\begin{align*}
(\overline{\ker (g_x)}^{W_0}+{P})/{P}\subset(\ker (g_x)+{P})/{P}.
\end{align*}
Indeed, let $v=s+q\in\ker (g_x)$ with $s\in S\oplus jS$ and $q\in{P}$.
Then by Lemma \ref{lem:106}, we can find $q'\in{P}$ such that
\begin{align*}
\overline{s}^{W_0}+q'\in\ker (g_x).
\end{align*}
Now we can see the relation from
\begin{align*}
\overline{v}^{W_0}=\overline{s}^{W_0}+\overline{q}^{W_0}
=(\overline{s}^{W_0}+q')+(\overline{q}^{W_0}-q').
\end{align*}
Next, for $v_1,v_2\in W$, suppose that $jv_1-v_2\in\ker (g_x)$.
Then there exists $q\in{P}$ such that
\begin{align*}
jv_1-v_2-j\overline{v_1}^{W_0}-\overline{v_2}^{W_0}+q\in\ker (g_x),
\end{align*}
and hence
\begin{align*}
-2\im v_1-2\re v_2+q\in\ker (g_x).
\end{align*}
It is clear that
\begin{align*}
v_2-\frac{q}{2}=(j\im v_2-\im v_1)+(\re v_2+\im v_1-\frac{q}{2}),
\end{align*}
where the first term is contained in ${P}$ and the second term $\ker (g_x)$.
Hence we obtain
\begin{align*}
v_2+\ker (g_x)\in[{P}]_x,
\end{align*}
which proves the assertion.
\end{proof}
Combining Proposition \ref{prop:107} and Lemma \ref{lem:241}, we get the following theorem.
\begin{theorem}\label{th:108}
Suppose that $\im Q(S,S)=\{0\}$.
Then one has $W={P}+S_{\nu}$.
\end{theorem}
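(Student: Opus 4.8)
The plan is to prove the non-trivial inclusion $W\subseteq {P}+S_\nu$, the reverse being immediate since both ${P}$ and $S_\nu$ sit inside $W$. First I would pass to the quotient $\overline{V}:=V/\ker(g_\nu)$. Because $R_\nu$ is $\mathbb{C}$-linear and self-adjoint and $\ker(g_\nu)=\ker R_\nu$ (as noted in the proof of Lemma \ref{lem:106}), the kernel is a complex subspace, so $j$ descends to $\overline{V}$; the form $g_\nu$ induces a nondegenerate symmetric form $\overline{g}$, which we may take to be positive definite since $\nu\in\cl(\Omega)$ (this is forced by the positivity of the reproducing kernel $K^\mathcal{H}$ and is consistent with Lemma \ref{lem:203}). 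Writing $[\cdot]_\nu$ for the projection and $\omega_\nu(\cdot,\cdot)=\overline{g}(\cdot,j\cdot)$ for the associated symplectic form (with $j$ an $\overline{g}$-isometry), the relevant objects become $\overline{W}:=[W]_\nu$, $\overline{{P}}:=[{P}]_\nu$, and $[S_\nu]_\nu$.

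Next I would translate the two cited inputs into $\overline{V}$. Proposition \ref{prop:107} at $x=\nu$ gives $\overline{W}\cap j\overline{W}=\overline{{P}}$, identifying $\overline{{P}}$ as the maximal complex subspace of $\overline{W}$. Since $[jW^{\perp,\,g_\nu}]_\nu=(j\overline{W})^{\perp,\,\overline{g}}=\overline{W}^{\perp,\,\omega_\nu}$, a direct check shows $[S_\nu]_\nu=\overline{W}\cap\overline{W}^{\perp,\,\omega_\nu}$, i.e. $[S_\nu]_\nu$ is exactly the radical of $\omega_\nu|_{\overline{W}}$. Lemma \ref{lem:241} then reads $\overline{{P}}\cap\overline{W}^{\perp,\,\omega_\nu}=\{0\}$, so the sum $\overline{{P}}+[S_\nu]_\nu$ is direct. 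Moreover $\ker(g_\nu)\cap W\subseteq jW^{\perp,\,g_\nu}\cap W=S_\nu$, so any identity $\overline{W}=\overline{{P}}\oplus[S_\nu]_\nu$ in $\overline{V}$ lifts back to $W={P}+S_\nu$ in $V$. It therefore suffices to establish this splitting in the quotient.

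The heart of the argument is to prove $\overline{W}=\overline{{P}}\oplus[S_\nu]_\nu$. Writing $\overline{W}=\overline{{P}}\oplus A'$ for the $\overline{g}$-orthogonal complement $A'$ of $\overline{{P}}$ inside $\overline{W}$, the inclusion $A'\cap jA'\subseteq(\overline{W}\cap j\overline{W})\cap A'=\overline{{P}}\cap\overline{{P}}^{\perp,\,\overline{g}}=\{0\}$ shows that $A'$ is totally real, and the splitting is equivalent to $A'$ being $\omega_\nu$-isotropic, since then $A'\subseteq\overline{W}^{\perp,\,\omega_\nu}$ and hence $A'=[S_\nu]_\nu$. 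To produce this isotropy I would exploit the vanishing hypothesis $\im Q(S,S)=\{0\}$ through its structural consequences: via \eqref{eq:240} it yields the $\re h$-orthogonal decomposition $V={P}\oplus S\oplus jS$ with $W={P}\oplus S$, and via Theorem \ref{th:103} and Lemma \ref{lem:106} it controls how $S\oplus jS$ and ${P}$ meet $\ker(g_x)$. Combining these with the full force of Proposition \ref{prop:107} (valid for every $x\in U$, not merely $x=\nu$), I would show that the pairing $\overline{g}(A',jA')$ vanishes.

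I expect the isotropy of $A'$ to be the main obstacle. The two cited statements, read at $x=\nu$ alone, control only the complex part $\overline{{P}}$ and the transversality $\overline{{P}}\cap\overline{W}^{\perp,\,\omega_\nu}=\{0\}$, and a purely formal dimension count is circular: a totally real subspace need not be $\omega_\nu$-isotropic in general, so without further input the desired identity can fail. What rescues the argument is precisely the Jordan-theoretic content of $\im Q(S,S)=\{0\}$, which excludes the problematic configurations and ultimately forces $\dim\overline{W}=\dim\overline{{P}}+\dim[S_\nu]_\nu$; carrying out this dimension identity cleanly, rather than the routine lifting step, is where the genuine work lies.
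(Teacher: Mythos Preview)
Your quotient setup is correct and essentially mirrors the paper's, but there is a genuine gap at precisely the step you yourself flag as unresolved: you never prove the $\omega_\nu$-isotropy of $A'$, and your suggested route through Theorem~\ref{th:103}, Lemma~\ref{lem:106}, and Proposition~\ref{prop:107} for varying $x$ remains an intention rather than an argument. The ingredient you are missing is Proposition~\ref{prop:109}. Since $g_e=\re h$ one has $jS=W^{\perp,\,g_e}$, so the hypothesis $\im Q(S,S)=\{0\}$ reads $\im Q(W^{\perp,\,g_e},W^{\perp,\,g_e})=\{0\}$; Proposition~\ref{prop:109} then gives $\langle\nu,\im Q(W^{\perp,\,g_\nu},W^{\perp,\,g_\nu})\rangle_U=\{0\}$, i.e.\ $g_\nu(jW^{\perp,\,g_\nu},W^{\perp,\,g_\nu})=\{0\}$, i.e.\ $[jW^{\perp,\,g_\nu}]_\nu\subset\overline{W}$. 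This single coisotropicity statement is where the ``Jordan-theoretic content'' you invoke has already been packaged (the spectral theorem enters in the proof of Proposition~\ref{prop:109}); there is no need to reopen Theorem~\ref{th:103} or to use Proposition~\ref{prop:107} beyond $x=\nu$.

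With coisotropicity in hand, the paper's conclusion is shorter than your $A'$-scheme: it bypasses $A'$ and runs a direct dimension count. From $[jW]_\nu\cap[W]_\nu=([jW^{\perp,\,g_\nu}]_\nu)^{\perp,\,\overline{g_\nu}}\cap[W]_\nu$ and nondegeneracy of $\overline{g_\nu}$ one gets $\dim\overline{P}\ge\dim\overline{W}-\dim[jW^{\perp,\,g_\nu}]_\nu$ (using Proposition~\ref{prop:107} at $x=\nu$), while coisotropicity together with Lemma~\ref{lem:241} gives a direct sum $\overline{P}\oplus[jW^{\perp,\,g_\nu}]_\nu\subset\overline{W}$; comparing dimensions forces equality, and your lifting step then finishes. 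A side remark: your assertion $\nu\in\cl(\Omega)$ is neither justified (Lemma~\ref{lem:203} yields positivity of $g_\nu$ only on $P$, not on all of $V$) nor needed, since the argument requires only nondegeneracy of $\overline{g_\nu}$, which is automatic on the quotient.
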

\begin{proof}
By Proposition \ref{prop:109}, we have $[jW^{\perp,\,g_{\nu}}]_{\nu}\subset[W]_{\nu},$ and hence
\begin{align*}
\left([W]_{\nu}\cap[jW]_{\nu}\right)+[jW^{\perp,\,g_{\nu}}]_{\nu}\subset[W]_{\nu}\subset V/\ker (g_{\nu}).
\end{align*}
Let $\overline{g_{\nu}}$ be the non-degenerate symmetric bilinear form on $[V]_{\nu}$ induced by $g_{\nu}$.
The dimension of
\begin{align*}
[jW]_{\nu}\cap[W]_{\nu}=[jW^{\perp,\,g_{\nu}}]_{{\nu}}^{\perp,\,\overline{g_{\nu}}}\cap[W]_{\nu}
\end{align*}
is greater than or equal to $\dim[W]_{\nu}-\dim[jW^{\perp,\,g_{\nu}}]_{\nu}$.
Then Proposition \ref{prop:107} and Lemma \ref{lem:241} tell us that
\begin{align*}
[{P}]_{\nu}\oplus[jW^{\perp,\,g_{\nu}}]_{\nu}=[W]_{\nu}\subset V/\ker (g_{\nu}),
\end{align*}
which proves ${P}+jW^{\perp,\,g_{\nu}}\supset W$ and hence the assertion.
\end{proof}
Let ${x}\in U$ be satisfying \eqref{positivity238} and \eqref{eq:113}.
Then we have
\begin{align*}
S_{x}\cap jS_{x}={P}\cap\ker (g_{x})=N_{x}.
\end{align*}
Choose any real subspace $S^{x}\subset S_{x}$ complementary to $N_{x}$.
Then we have
\begin{equation*}
S_{x}+jS_{x}=S^{x}\oplus jS^{x}\oplus N_{x}.
\end{equation*}
In the following, for $s\in S^{x}\oplus jS^{x}$, we abbreviate $\overline{s}^{S^{x}}$ to $\overline{s}^{x}$.
Letting
\begin{align*}
{P}^{x}:=N_{x}^{\perp,\,\re h}\cap{P},
\end{align*}
we have
\begin{equation}\label{eq:224}
W={P}+S_{x}={P}^{x}\oplus S_{x}={P}\oplus S^{x}
\end{equation}
in view of Theorem \ref{th:108}.

Now recall from Proposition \ref{prop:202} that
\begin{align*}
K^\mathcal{H}(z,q+s,z',q'+s')=\exp({i\langle\nu,z-\overline{z'}-2iQ(q,q')-2iQ(s,s')\rangle_U})F(s-\overline{s'}^{\nu})
\end{align*}
with $q,q'\in{P},s,s'\in S^{\nu}\oplus jS^{\nu}$ for some $F\in\mathcal{O}(S^{\nu}\oplus jS^{\nu})$ (see also Remark \ref{decomposition242}).
The following proposition plays a crucial role in the derivation of $K^\mathcal{H}$ in Theorem \ref{th:4.4} below.
\begin{proposition}\label{prop:223}
For any $H\in\Gamma(S^{\nu}\oplus jS^{\nu})$, the function
\begin{align*}
&\widetilde{H}(z,q+s,z',q'+s'):
\\&=\exp({\langle\nu,i(z-\overline{z'})+2Q(q,q')+Q(s,\overline{s}^\nu)+Q(\overline{s'}^\nu,s')\rangle_U})H(s,s')
\end{align*}
is contained in $\Gamma({\mathcal{S}(\Omega,Q)})$.
Moreover, if $H\in\Gamma_{S^{\nu}}(S^{\nu}\oplus jS^{\nu})$, then $\widetilde{H}\in\Gamma_{G^W}({\mathcal{S}(\Omega,Q)})$.
\end{proposition}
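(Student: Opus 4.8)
The plan is to factor $\widetilde{H}$ as a pointwise product of manifestly positive-type kernels and invoke the Schur (Hadamard) product theorem. Writing the exponential prefactor as $E = E_1 E_2 E_3$ with
\begin{align*}
E_1 &= \exp(i\langle\nu, z - \overline{z'}\rangle_U), \\
E_2 &= \exp(2\langle\nu, Q(q,q')\rangle_U), \\
E_3 &= \exp(\langle\nu, Q(s,\overline{s}^\nu)\rangle_U)\,\exp(\langle\nu, Q(\overline{s'}^\nu, s')\rangle_U),
\end{align*}
we have $\widetilde{H} = E_1 E_2 E_3 \cdot (H\circ\mathrm{pr})$, where $\mathrm{pr}(z,q+s) = s$ and $H\circ\mathrm{pr}$ denotes the pullback of $H$. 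Since a pointwise product of positive-type kernels is again of positive type, and $H\in\Gamma(S^\nu\oplus jS^\nu)$ pulls back to a positive-type kernel on $\mathcal{S}(\Omega,Q)$ that is holomorphic in $s$ and anti-holomorphic in $s'$, it suffices to show that each $E_i$ is a positive-type kernel, holomorphic in $(z,q,s)$ and anti-holomorphic in $(z',q',s')$; the Hermitian symmetry of $\widetilde{H}$ then follows factor by factor.

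For $E_1$ and $E_3$ I would exhibit rank-one factorizations $E_1 = \alpha(z)\overline{\alpha(z')}$ with $\alpha(z) = e^{i\langle\nu,z\rangle_U}$, and $E_3 = \psi(s)\overline{\psi(s')}$ with $\psi(s) = \exp(\langle\nu, Q(s,\overline{s}^\nu)\rangle_U)$, each automatically of positive type. The genuinely non-obvious point is that $\psi$ is holomorphic: although $\overline{s}^\nu$ is anti-holomorphic in $s$, writing $s = s_1 + js_2$ with $s_1,s_2\in S^\nu$ and using that $Q$ is Hermitian one computes
\begin{align*}
\langle\nu, Q(s,\overline{s}^\nu)\rangle_U = g_\nu(s_1,s_1) - g_\nu(s_2,s_2) + 2i\,g_\nu(s_1,s_2),
\end{align*}
which is the value at $(s,s)$ of the $\mathbb{C}$-bilinear extension of the symmetric form $g_\nu|_{S^\nu}$, a holomorphic polynomial in $s$; hence $\psi$ is holomorphic and $\overline{\psi(s')}$ anti-holomorphic. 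For $E_2$, note $2\langle\nu, Q(q,q')\rangle_U = 4h(R_\nu q, q')$ is the exponential of the Hermitian form $(q,q')\mapsto 4h(R_\nu q,q')$ on $P$, which is positive semidefinite by Lemma \ref{lem:203} (inequality \eqref{positivity238}); the exponential of a positive-semidefinite Hermitian kernel is of positive type (expand in power series and apply the Schur product theorem termwise), so $E_2$ is positive type, holomorphic in $q$ and anti-holomorphic in $q'$. Combining the three factors proves $\widetilde{H}\in\Gamma(\mathcal{S}(\Omega,Q))$.

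For the invariance assertion I would reduce $G^W$-invariance to invariance under the generators $\mathbf{n}(x_0,0)$ with $x_0\in U$ and $\mathbf{n}(0,v_0)$ with $v_0 = q_0+s_0\in W = P\oplus S^\nu$. Invariance under the $z$-translations $\mathbf{n}(x_0,0)$ is immediate from $E_1$ since $x_0$ is real. For $\mathbf{n}(0,v_0)$ I would substitute $\tilde{z} = z + 2iQ(v,v_0) + iQ(v_0,v_0)$ and $\tilde{v} = v+v_0$ into $\widetilde{H}$ and verify, using the group law, the Hermitian symmetry of $Q$, the hypothesis $\im Q(S,S)=\{0\}$ (equivalently $\re h(jS,S)=\{0\}$, \eqref{eq:240}), and the $g_\nu$-orthogonality $S_\nu\subset jW^{\perp,g_\nu}$, that all correction terms in the exponent cancel. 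A representative case is the $S^\nu$-translation $\mathbf{n}(0,s_0)$ with $q=q'=0$, where the contributions $-Q(s,s_0)+Q(s_0,\overline{s}^\nu)$ and $-Q(s_0,s')+Q(\overline{s'}^\nu,s_0)$ vanish after pairing with $\nu$ precisely because $h(R_\nu s_1,s_0)$ is real for $s_1,s_0\in S^\nu$. Consequently $E$ is $G^W$-invariant, and the only residual effect of $\mathbf{n}(0,v_0)$ on $\widetilde{H}$ is the translation $H(s,s')\mapsto H(s+s_0,s'+s_0)$ of the $H$-slot; thus $H\in\Gamma_{S^\nu}(S^\nu\oplus jS^\nu)$ forces $\widetilde{H}\in\Gamma_{G^W}(\mathcal{S}(\Omega,Q))$.

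The main obstacle is twofold. Conceptually, the crux is recognizing that the term $\langle\nu,Q(s,\overline{s}^\nu)\rangle_U$, which at first sight is neither holomorphic nor anti-holomorphic, is in fact the holomorphic quadratic form obtained by complexifying $g_\nu|_{S^\nu}$; this is what yields the rank-one factorization of $E_3$ and is the step that genuinely exploits the splitting $V = P\oplus S^\nu\oplus jS^\nu$ and the definition of $S^\nu$. Technically, the laborious part is the invariance computation: tracking the cross terms $Q(q,s_0)$ and $Q(s,q_0)$ for a general $v_0 = q_0+s_0$ and confirming their cancellation requires careful use of all the orthogonality relations established earlier, and it is here that the hypothesis $\im Q(S,S)=\{0\}$ enters essentially.
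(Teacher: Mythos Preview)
Your proposal is correct and essentially matches the paper's approach. For positivity the paper likewise writes out the defining sum and factors off the rank-one pieces in $(z,z')$ and $(s,s')$, then uses Lemma~\ref{lem:203} for the positive-type Gaussian in $(q,q')$; for invariance the paper takes the minor shortcut of observing that the full kernel $\exp\bigl(i\langle\nu,\,z-\overline{z'}-2iQ(v,v')\rangle_U\bigr)$ is manifestly $G^W$-invariant (indeed $G^V$-invariant from the group law), so that only the $S^\nu$-translation invariance of the residual $(s,s')$-dependent factor needs checking---the same endpoint your direct verification reaches via the same orthogonality identities.
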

\begin{proof}
For $a_k\in\mathbb{C}$, $z_k\in U_\mathbb{C}$, $q_k\in{P}$, $s_k\in S^{\nu}\oplus jS^{\nu}\,(k=1,2,\cdots, n_0)$, we have
\begin{align*}
\sum_{k,l=1}^{n_0}&a_k\overline{a_l}\widetilde{H}(z_l,q_l+s_l,z_k,q_k+s_k)
\\=\sum_{k,l=1}^{n_0}&\exp({\langle\nu,i(z_l-\overline{z_k})+2Q(q_l,q_k)+Q(s_l,\overline{s_l}^\nu)+Q(\overline{s_k}^\nu,s_k)\rangle_U})
\\&\quad\cdot a_k\overline{a_l}H(s_l,s_k),
\end{align*}
which equals
\begin{align*}
\sum_{k,l=1}^{n_0}a_ke^{\langle\nu,Q(s_l,\overline{s_l}^{\nu})\rangle_U}e^{i\langle\nu,z_l\rangle_U}\overline{a_le^{\langle\nu,Q(s_k,\overline{s_k}^{\nu})\rangle_U} e^{i\langle\nu,z_k\rangle_U}}e^{2\langle\nu,Q(q_l,q_k)\rangle_U}H(s_l,s_k),
\end{align*}
by Proposition \ref{prop:109}.
Now the positivity of $\widetilde{H}$ can be seen from the expression and Lemma \ref{lem:203}.
The latter assertion immediately follows from the fact
\begin{align*}
\exp({\langle\nu,i(z-\overline{z'})+2Q(q,q')+2Q(s,s')\rangle_U})
\end{align*}
is $G^W$-invariant.
\end{proof}
\begin{theorem}\label{th:4.4}
There exists $\chi\in (S^{\nu})^*$ such that
\begin{equation}\label{eq:249}
\begin{split}
&K^\mathcal{H}(z,q+s,z',q'+s')
\\&=\exp({i\langle\nu,z-\overline{z'}-2iQ(q,q')-iQ(s,\overline{s}^\nu)-iQ(\overline{s'}^\nu,s')\rangle_U})
\\&\quad\cdot e^{-i\langle\chi,s-\overline{s'}^{\nu}\rangle}
\end{split}
\end{equation}
with $q,q'\in{P},s,s'\in S^{\nu}\oplus jS^{\nu}$.
\end{theorem}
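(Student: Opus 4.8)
The plan is to take the expression for $K^\mathcal{H}$ from Proposition~\ref{prop:202}, rewrite it using the decomposition \eqref{eq:224}, and reduce the problem to identifying the holomorphic factor $F$ on $S^{\nu}\oplus jS^{\nu}$. First I would recall that Proposition~\ref{prop:202} gives
\begin{align*}
K^\mathcal{H}(z,q+s,z',q'+s')=\exp({i\langle\nu,z-\overline{z'}-2iQ(q,q')-2iQ(s,s')\rangle_U})F(s-\overline{s'}^{\nu})
\end{align*}
for some $F\in\mathcal{O}(S^{\nu}\oplus jS^{\nu})$. The key observation is that the prefactor in Proposition~\ref{prop:223} differs from the one above only in the symmetrized $Q$-term: Proposition~\ref{prop:223} uses $Q(s,\overline{s}^\nu)+Q(\overline{s'}^\nu,s')$ in place of $2Q(s,s')$. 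So I would define a reduced kernel $H$ on $S^{\nu}\oplus jS^{\nu}$ by factoring out the Proposition~\ref{prop:223}-type exponential; concretely, set
\begin{align*}
H(s,s'):=F(s-\overline{s'}^{\nu})\exp({-\langle\nu,Q(s,\overline{s}^\nu)+Q(\overline{s'}^\nu,s')-2Q(s,s')\rangle_U}),
\end{align*}
and check that $\widetilde{H}=K^\mathcal{H}$, so that $H\in\Gamma_{S^{\nu}}(S^{\nu}\oplus jS^{\nu})$ by the converse direction of the correspondence.

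Next I would analyze $H$ as a reproducing kernel on the complex vector space $S^{\nu}\oplus jS^{\nu}$ that is invariant under translations by the real form $S^{\nu}$. The crucial point is that $\mathcal{H}\neq 0$ forces $K^\mathcal{H}$ to be extremal (it corresponds to an irreducible invariant Hilbert subspace via Theorem~\ref{th:243}), and extremality should pin down $H$ to be a one-dimensional object, i.e. of the form $H(s,s')=e^{-i\langle\chi,s-\overline{s'}^{\nu}\rangle}$ for a suitable $\chi\in(S^{\nu})^*$ extended to $S^{\nu}\oplus jS^{\nu}$ by complex linearity. To establish this I would argue that a translation-invariant reproducing kernel on a complex vector space, when it arises from an irreducible representation, must be a character of the abelian translation group $S^{\nu}$; here the invariance of $H$ under $S^{\nu}$ together with the Hermitian-positive-type condition and the extremality leaves only the one-dimensional characters as extremal rays. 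Substituting this form of $H$ back into $\widetilde{H}=K^\mathcal{H}$ and simplifying the exponent then yields exactly \eqref{eq:249}.

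The main obstacle I anticipate is justifying rigorously that $H$ is a pure character rather than a more general element of $\Gamma_{S^{\nu}}(S^{\nu}\oplus jS^{\nu})$. The decomposition \eqref{eq:224} guarantees that $W={P}\oplus S^{\nu}$ with $S^{\nu}$ a genuine complement, so the residual freedom in $F$ lives entirely over the $S^{\nu}\oplus jS^{\nu}$ factor where the only remaining group action is translation by the real form $S^{\nu}$; but one must verify that the differential equation \eqref{eq:235} (which encoded the $\mathfrak{h}_-$-invariance) has already been fully exploited and imposes no further holomorphic constraint beyond the character form. I expect this to follow from the extremality of $K^\mathcal{H}$: since extremal invariant kernels correspond to \emph{irreducible} invariant Hilbert subspaces, and the irreducible unitary representations of the abelian group $S^{\nu}$ are precisely its unitary characters, the reproducing kernel $H$ must be the kernel attached to a single character, which forces the exponential-linear form with a uniquely determined $\chi$. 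Confirming that the analytic-continuation step (passing from real $s,s'\in S^{\nu}$ to $s,s'\in S^{\nu}\oplus jS^{\nu}$) preserves this form is the remaining technical wrinkle, handled as in the proof of Proposition~\ref{prop:202}.
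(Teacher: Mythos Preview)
Your approach is essentially the same as the paper's: define $H$ so that $\widetilde{H}=K^\mathcal{H}$ via Proposition~\ref{prop:223}, deduce that $H$ is extremal in $\Gamma_{S^\nu}(S^\nu\oplus jS^\nu)$ from the extremality of $K^\mathcal{H}$, and conclude that $H$ is a character of the abelian group $S^\nu$. Your $H$ coincides with the paper's $H_0(s,s')=e^{-\langle\nu,Q(s-\overline{s'}^\nu,\overline{s}^\nu-s')\rangle_U}F(s-\overline{s'}^\nu)$, once one uses Proposition~\ref{prop:109} to see that $\langle\nu,Q(s,s')\rangle_U=\langle\nu,Q(\overline{s'}^\nu,\overline{s}^\nu)\rangle_U$ for $s,s'\in S^\nu\oplus jS^\nu$.

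One small point to tighten: you write that $H\in\Gamma_{S^\nu}(S^\nu\oplus jS^\nu)$ ``by the converse direction of the correspondence,'' but Proposition~\ref{prop:223} only states the forward direction $H\mapsto\widetilde{H}$. The paper instead verifies the positive-definiteness of $H_0$ directly, by restricting the positivity inequality for $K^\mathcal{H}$ to points of the form $(ie,s_k)$ and invoking Proposition~\ref{prop:109}; this is straightforward and fills the gap. Your worries about residual constraints from \eqref{eq:235} and about analytic continuation are unnecessary: once $H_0$ is identified as the kernel of an irreducible $S^\nu$-invariant Hilbert subspace, the abelianness of $S^\nu$ forces the character form immediately, and substituting back gives \eqref{eq:249} with no further continuation step.
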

\begin{proof}
By the positivity of $K^\mathcal{H}$, for any $a_k\in\mathbb{C}$, $s_k\in S^{\nu}\oplus jS^{\nu}\,(k=1,2,\cdots, n_0)$, we have
\begin{equation*}
\sum_{k,l=1}^{n_0}a_k\overline{a_l}e^{2\langle\nu,Q(s_l,s_k)\rangle_U}F(s_l-\overline{s_k}^{\nu})\geq 0,
\end{equation*}
and hence by Proposition \ref{prop:109}, the function
\begin{align*}
H_0(s,s'):=e^{-\langle\nu,Q(s-\overline{s'}^{\nu},\overline{s}^{\nu}-s')\rangle_U}F(s-\overline{s'}^{\nu})\quad(s,s'\in S^{\nu}\oplus jS^{\nu})
\end{align*}
is contained in $\Gamma_{S^{\nu}}(S^{\nu}\oplus jS^{\nu})$.
By Proposition \ref{prop:223}, it follows that the function $H_0\in\Gamma_{S^{\nu}}(S^{\nu}\oplus jS^{\nu})$ is extremal since $K^\mathcal{H}=\widetilde{H_0}$ is extremal.
The corresponding Hilbert subspace of $\mathcal{O}(S^{\nu}\oplus jS^{\nu})$ is irreducible by Theorem \ref{th:243}, and hence we get
\begin{align*}
H_0(s,s')=e^{-i\langle\chi,s-\overline{s'}^{\nu}\rangle}
\end{align*}
for some $\chi\in (S^{\nu})^*$.
\end{proof}
Next, we present a concrete description of an admissible parametrization of $\ext(\Gamma_{G^W}(\mathcal{S}(\Omega,Q)))$.
For this, we first express $K^\mathcal{H}$ in terms of the coordinates $q\in P$ and $s\in S \oplus jS$.
For $s\in S\oplus jS$, let $s^{x}$ be the projection of $s$ on $S^{x}\oplus jS^{x}$ given by the decomposition \eqref{eq:221} with $S$ replaced by $S^{x}$.
Let us define $p^{x}:S\oplus jS\rightarrow V$ by $p^{x} s:=s^{x}$ and a self-adjoint operator $A^{x}$ on $(S\oplus jS, h)$ by
\begin{align*}
A^{x}:=2(p^{x})^*R_{x}p^{x}.
\end{align*}
Then we can write
\begin{align*}
\langle{x},Q(s^{x},\overline{s^{x}}^{x})\rangle_U=h(s,A^{x}\overline{s}^S).
\end{align*}
For $0\leq k\leq \dim_\mathbb{C}P$, let
\begin{align*}
\Lambda_k:=\left\{{x}\in U\Bigm| \begin{array}{c}g_{x}|_{P\times P}\text{ is positive semi-definite},\\ \dim_\mathbb{C}N_{x}=k,\quad N_{x}\subset \ker(g_{x})\end{array}\right\}
\end{align*}
and put $\Lambda:=\coprod_{k=0}^{\dim_\mathbb{C}P}\Lambda_k$ and for $({x},\chi)\in \Lambda\times S^*$, let
\begin{equation}\label{eq:Ax}\begin{split}
L^{{x},\chi}(z,v,z',v'):&=e^{i\langle {x},z-\overline{z'}-2iQ(q,q')\rangle_U}e^{h(s,A^{x}\overline{s}^S)}e^{h(\overline{s'}^S,A^{x}s')}e^{-i\langle\chi,s-\overline{s'}\rangle}
\\&=e^{i\langle {x},z-\overline{z'}-2iQ(v,v')\rangle_U}e^{h(s-\overline{s'}^S,A^{x}(\overline{s}^S-s'))}e^{-i\langle\chi,s-\overline{s'}\rangle}
\end{split}\end{equation}
with $v=q+s,v'=q'+s'\,(q,q'\in{P},s,s'\in S\oplus jS)$.
\begin{corollary}\label{cor:115}
Under the condition \eqref{condi:key}, for any $G^W$-invariant Hilbert subspace $\mathcal{H}$ of $\mathcal{O}({\mathcal{S}(\Omega,Q)})$, there exists unique Radon measures $m_{k}$ on $\Lambda_k\times S^*\,(k=0,1,\cdots,\dim_\mathbb{C}P)$ such that the reproducing kernel $K^\mathcal{H}$ of $\mathcal{H}$ is expressed as
\begin{align*}
K^\mathcal{H}(z,v,z',v')=\sum_{k=0}^{\dim_\mathbb{C}P}\int_{\Lambda_k\times S^*}L^{{x},\chi}(z,v,z',v')\,dm_k({x},\chi).
\end{align*}
\end{corollary}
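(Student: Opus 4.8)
The plan is to exhibit the family $\{L^{x,\chi}\}_{(x,\chi)\in\Lambda\times S^*}$ as an admissible parametrization of $\ext(\Gamma_{G^W}(\mathcal{S}(\Omega,Q)))$ in the sense of Section~\ref{subsect:invariant}, and then to feed it into the integral-representation machinery of Theorems~\ref{th:integralexpression} and~\ref{th:uniquemeasure}. First I would assemble the classification of the extremal kernels. Theorem~\ref{th:4.4} shows that every nonzero $K^\mathcal{H}\in\ext(\Gamma_{G^W}(\mathcal{S}(\Omega,Q)))$ has the asserted exponential shape, governed by a vector $\nu\in U$ subject to \eqref{positivity238} and \eqref{eq:113} together with a form $\chi$; since these two conditions are exactly the defining constraints of $\Lambda$ and since $\dim_\mathbb{C}N_\nu$ selects the stratum, one has $\nu\in\Lambda_k$ for $k=\dim_\mathbb{C}N_\nu$. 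For the reverse inclusion I would run the proof of Theorem~\ref{th:4.4} backwards: for each $(x,\chi)\in\Lambda\times S^*$ the rank-one kernel $H(s,s')=e^{-i\langle\chi,s-\overline{s'}\rangle}$ lies in $\Gamma_{S^x}(S^x\oplus jS^x)$ and is extremal, while Proposition~\ref{prop:223} identifies $\widetilde H$ with $L^{x,\chi}$. Because the central subgroup $G_1$ acts on the Hilbert subspace of $L^{x,\chi}$ by the single character $e^{i\langle x,\cdot\rangle_U}$, any decomposition of $L^{x,\chi}$ in $\Gamma_{G^W}(\mathcal{S}(\Omega,Q))$ stays within this fiber, where the bijection $H\mapsto\widetilde H$ and the extremality of $H$ force proportionality; hence $L^{x,\chi}$ is extremal, and the two directions combine to give $\ext(\Gamma_{G^W}(\mathcal{S}(\Omega,Q)))=\{0\}\coprod\coprod_{(x,\chi)}\mathbb{R}_{>0}L^{x,\chi}$.

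Next I would verify that $(x,\chi)\mapsto L^{x,\chi}$ is injective and continuous, with universally measurable inverse. Injectivity is read directly off \eqref{eq:Ax}: the factor $e^{i\langle x,z-\overline{z'}\rangle_U}$ recovers $x$ and the factor $e^{-i\langle\chi,s-\overline{s'}\rangle}$ recovers $\chi$, so distinct parameters give non-proportional kernels. As $\Lambda\times S^*$ is a second countable, locally compact subset of $U\times S^*$, universal measurability of the inverse is automatic, as remarked after the definition of admissible parametrization.

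Granting the parametrization, Theorem~\ref{th:integralexpression} furnishes, for the given $G^W$-invariant Hilbert subspace $\mathcal{H}$, a Radon measure $m$ on $\Lambda\times S^*$ with $K^\mathcal{H}=\int_{\Lambda\times S^*}L^{x,\chi}\,dm$; restricting $m$ to the finitely many strata $\Lambda_k\times S^*$ produces the $m_k$ and the asserted finite sum. Uniqueness of the $m_k$ is, by Theorem~\ref{th:uniquemeasure}, equivalent to the multiplicity-freeness of $\pi_0$, which holds under the standing hypothesis \eqref{condi:key} and is precisely \ref{condi:mainreal}~$\Rightarrow$~\ref{condi:mainmf} of Theorem~\ref{th:main}. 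To keep the present argument self-contained I would instead prove uniqueness directly from injectivity of the transform $(x,\chi)\mapsto L^{x,\chi}$: specializing $v=v'=0$ collapses $K^\mathcal{H}$ to a Laplace transform $\int e^{i\langle x,z-\overline{z'}\rangle_U}$ against the $\Lambda$-marginal of $m$, whose injectivity on the cone pins down that marginal, after which varying $s,s'$ and Fourier-inverting the character $e^{-i\langle\chi,s-\overline{s'}\rangle}$ recovers the full dependence on $(x,\chi)$.

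The main obstacle I anticipate is the continuity of the parametrization across the strata, since $S^x$, $N_x$, the projection $p^x$, and hence the operator $A^x=2(p^x)^*R_xp^x$ all jump as $x$ passes between the $\Lambda_k$'s. I would resolve this by working with the stratum-free second line of \eqref{eq:Ax}, namely the factor $e^{h(s-\overline{s'}^S,\,A^x(\overline{s}^S-s'))}$: because $v\in\ker(g_x)\Leftrightarrow R_xv=0$ and $N_x\subset\ker(g_x)$ for $x\in\Lambda$, the operator $R_x$ annihilates exactly the directions responsible for the jump in $p^x$, so $A^x$ in fact depends continuously on $x$ through $R_x$ and the spectral data of Theorem~\ref{th:228}. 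The accompanying bookkeeping, namely matching this $S\oplus jS$-coordinate form with the $S^\nu\oplus jS^\nu$-coordinate expression of Theorem~\ref{th:4.4} via the identity $\langle x,Q(s^x,\overline{s^x}^x)\rangle_U=h(s,A^x\overline{s}^S)$, is the remaining technical point.
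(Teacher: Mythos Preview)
Your overall strategy matches the paper's: exhibit $\{L^{x,\chi}\}$ as an admissible parametrization and invoke Theorems~\ref{th:integralexpression} and~\ref{th:uniquemeasure}. The differences lie precisely in the two technical points that make up the body of the paper's proof, and your treatment of both has gaps.

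\emph{Topology on $\Lambda$.} You assert that $\Lambda\times S^*$ is ``a second countable, locally compact subset of $U\times S^*$''. With the subspace topology from $U$ this is not clear: the strata $\Lambda_k$ can accumulate on lower strata, and the union need not be locally closed. The paper avoids this by equipping $\Lambda$ with the disjoint-union topology $\coprod_k\Lambda_k$, and then proves that each $\Lambda_k$ is open in $\Lambda_{\ge k}$ (hence locally closed in $U$, hence locally compact). Continuity therefore only has to be checked stratum by stratum, at points of a fixed $\Lambda_k$.

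\emph{Continuity of $(x,\chi)\mapsto L^{x,\chi}$.} Your heuristic that ``$R_x$ annihilates the directions responsible for the jump in $p^x$'' is not a proof: $A^x=2(p^x)^*R_xp^x$ sees $p^x$ on both sides of $R_x$, and $p^x$ depends on a noncanonical choice of complement $S^x\subset S_x$. The paper's argument is concrete: near a fixed $x'\in\Lambda_k$ one runs Gram--Schmidt on an $h_{x}$-orthonormal basis of $P^{x'}$ to obtain $q_l(x)$ depending continuously on $x$; the projection of $v$ onto $P^{\perp,g_x}$ is then, modulo $N_x$, the explicit polynomial $v-\sum_l h_x(v,q_l(x))q_l(x)$, so $\langle x,Q(s^x-\overline{s'{}^x}^x,\overline{s^x}^x-s'{}^x)\rangle_U$ is locally a polynomial in $x$ and the $q_l(x)$. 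This is exactly the ``remaining technical point'' you flag at the end, and it is the heart of the proof rather than bookkeeping.

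Your direct uniqueness argument via Laplace and Fourier inversion is a legitimate alternative to invoking Theorem~\ref{th:uniquemeasure} (the paper simply cites that theorem), but note that the $U$-marginal of $m$ is supported on $\Lambda$, not on $\Omega$, so the Laplace-injectivity step needs a bit more than the usual cone argument.
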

\begin{proof}
By Theorem \ref{th:uniquemeasure}, it is enough to show that the map
\begin{align*}
\Lambda\times S^*\ni({x},\chi)\mapsto L^{{x},\chi}(z,v,z',v')\in\ext (\Gamma_{G^W}(\mathcal{S}(\Omega,Q)))
\end{align*}
is an admissible parametrization of $\ext (\Gamma_{G^W}(\mathcal{S}(\Omega,Q)))$ when we equip $\Lambda\simeq \coprod_{k=0}^{\dim_\mathbb{C} P}\Lambda_k$ with the product topology.
In the following, when we refer the topology of $\Lambda$, it is always assumed to be the initial topology unless stated otherwise.
For $({x}',\chi')\in\Lambda_k\times S^*$, take an orthonormal basis
\begin{align*}
(q_1,q_2,\cdots, q_{\dim_\mathbb{C}P-k})
\end{align*}
of $P^{{x}'}$ with respect to the Hermitian inner product whose real part is $g_{{x}'}$.
For ${x}$ within a suitable neighborhood of ${x}'$, we produce, by the Gram-Schmidt process, the orthonormal vectors $(q_1({x}),q_2({x}),\cdots, q_{\dim_\mathbb{C} P-k}({x}))$ in $P$ with respect to the Hermitian form $h_{x}$ such that $\re h_{x}=g_{x}$.
Then the maps $q_l(\cdot)\,(1\leq l\leq \dim_\mathbb{C}P-k)$ are continuous at ${x}={x}'$, which implies that $\Lambda_k\subset \Lambda_{\geq k}:=\coprod_{l=k}^{\dim_\mathbb{C}P}\Lambda_l$ is open.
For ${x}\in\Lambda_{\geq k}$ within a suitable neighborhood of ${x}'$, the projection of $v\in V$ on $P^{\perp,\,g_{x}}=S^{x}\oplus jS^{x}\oplus N_{x}$ along with the decomposition $V=P^{\perp,\,g_{x}}\oplus P^{x}$ is expressed, up to a vector in $N_{x}$, as
\begin{align*}
v-\sum_{l=1}^{\dim_\mathbb{C}P-k}h_{x}(v,q_l({x}))q_l({x}),
\end{align*}
and hence as $F(v,{x},q_1({x}),q_2({x}),\cdots,q_{\dim_\mathbb{C}P-k}({x}))$ for some polynomial map $F$.
So, the map
\begin{align*}
\Lambda\times V\times V\ni({x},v,v')\mapsto \langle{x},Q(s^{x}-\overline{s'}^{x},\overline{s}^{x}-{s'}^{x})\rangle_U\in\mathbb{C}
\end{align*}
is locally of the form $F'(v,\overline{v'}^{W_0},{x},q_1({x}),q_2({x}),\cdots,q_{\dim_\mathbb{C}P-k}({x}))$ for some polynomial map $F'$.
Therefore, the map $\Lambda_{\geq k}\times S^*\ni({x},\chi)\mapsto L^{{x},\chi}(z,v,z',v')\in\mathcal{O}(\mathcal{S}(\Omega,Q))$ is continuous at $({x},\chi)=({x}',\chi')$.
To complete the proof, it is enough to show the space $\Lambda\simeq\coprod_{k=0}^{\dim_\mathbb{C}P}\Lambda_k$ with the product topology is second countable and locally compact.
The latter follows from the fact that $\Lambda_k\subset \Lambda_{\geq k}$ is open for each $0\leq k\leq\dim_\mathbb{C}P$.
\end{proof}

\subsection{Construction of intertwining operators}\label{subsect:intertwiningoperator}
In this subsection, we show \ref{condi:mainreal} $\Rightarrow$ \ref{condi:mainmf} of Theorem \ref{th:main}.
The condition \ref{condi:mainmf} can be expressed as $\ext(\Gamma_{G^W}(\mathcal{S}(\Omega,Q)))/\mathbb{R}^\times\hookrightarrow\widehat{G^W}$.
The set on the left hand side is determined in Theorem \ref{th:4.4} in Sect. \ref{subsect:extremal}.
To understand the unitary equivalences, it is natural to employ the orbit method and construct irreducible unitary representations from coadjoint orbits.
In this context, following the Auslander-Kostant theory \cite{auslander_polarization_1971}, we utilize the holomorphically induced representation  defined by a complex polarization, rather than a real polarization.
Proposition \ref{prop:245} provides a realization of the holomorphically induced representation on the Fock space $\mathcal{F}_x\subset\mathcal{O}(P)$ defined below, which can be seen as a generalization of the Bargmann-Fock representation of the Heisenberg group.
We complete the proof by constructing an intertwining operator from $\mathcal{F}_x$ to $\mathcal{O}(\mathcal{S}(\Omega,Q))$ and observing the coadjoint orbits.

For a finite-dimensional vector space $V_0$ over $\mathbb{R}$, let us consider the pushforward measure $\mu_{V_0}$ of the Lebesgue measure by a linear isomorphism of $\mathbb{R}^{\dim V_0}$ onto $V_0$.
Suppose that $\mu^{x}:=\mu_{{P}^{x}}$ is normalized so that
\begin{align*}
\int_{{P}^{x}}e^{-2\langle {x},Q(q,q)\rangle_U}d\mu^{x}(q)=1.
\end{align*}
Put
\begin{align*}
\mathcal{F}_{x}:=\left\{F\in\mathcal{O}({P})\Bigm| \begin{array}{c}F(q_1+q_2)=F(q_1)\text{ for all }q_1\in{P}\text{ and }q_2\in N_{x},\\
\int_{{P}^{x}}|F(q)|^2e^{-2\langle {x},Q(q,q)\rangle_U}\,d\mu^{x}(q)<\infty\end{array}\right\}.
\end{align*}
By \eqref{eq:224}, we can define for $\sigma\in (S_{x})^*$, a unitary representation $(V_{{x},\sigma},\mathcal{F}_{x})$ by
\begin{align*}
\begin{split}
V_{{x},\sigma}({\bf n}(x_0,q_0+s_0))F(q):=e^{-i\langle{x},x_0+2iQ(q,q_0)-iQ(q_0,q_0)\rangle_U}e^{i\langle\sigma,s_0\rangle}F(q-q_0)\\
(x_0\in U,q_0\in{P}^{x},s_0\in S_{x},F\in\mathcal{F}_x).
\end{split}
\end{align*}
Let $X_{{x},\sigma}\in(\mathfrak{g}^W)^*$ be given by
\begin{align*}
X_{{x},\sigma}(x_0,q_0+s_0):=-\langle{x},x_0\rangle_U+\langle\sigma,s_0\rangle\quad(x_0\in U,q_0\in{P}^{x},s_0\in S_{x}).
\end{align*}
Let
\begin{align*}
\mathfrak{p}:=U_\mathbb{C}\oplus(S_{x})_\mathbb{C}\oplus\{q+ijq\mid q\in{P}^{x}\},\quad \mathfrak{d}:=\mathfrak{p}\cap\mathfrak{g}^W,\quad D:=G^{S_{x}}.
\end{align*}
\begin{proposition}\label{lem:244}
The complex subalgebra $\mathfrak{p}$ is a positive polarization at $X_{{x},\sigma}\in(\mathfrak{g}^W)^*$ and satisfies the Pukanszky condition.
\end{proposition}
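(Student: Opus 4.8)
The plan is to verify the defining conditions of a positive polarization à la Auslander--Kostant \cite{auslander_polarization_1971}: that $\mathfrak{p}$ is a complex subalgebra of $(\mathfrak{g}^W)_\mathbb{C}$ containing the complexified radical, that $\mathfrak{p}+\overline{\mathfrak{p}}$ is a subalgebra, that $\mathfrak{p}$ is maximal isotropic for $B:=X_{x,\sigma}([\cdot,\cdot])$, that the associated Hermitian form is positive, and finally the Pukanszky condition. Write $\ell:=X_{x,\sigma}$, so that $\ell(u,0)=-\langle x,u\rangle_U$ on $U$. Since $\mathfrak{g}^W$ is two-step nilpotent with $[(x_1,v_1),(x_2,v_2)]=(4\im Q(v_1,v_2),0)$ valued in the central ideal $U$, and since the $\mathbb{C}$-linearity of $R_x$ gives $\langle x,\im Q(v,v')\rangle_U=g_x(v,jv')$, one has
\begin{align*}
B((x_1,v_1),(x_2,v_2))=-4\langle x,\im Q(v_1,v_2)\rangle_U=-4g_x(v_1,jv_2).
\end{align*}
Thus $B$ only sees the $W$-components, and its radical is $\mathfrak{g}^W(\ell)=U\oplus S_x$ because $s_0\in S_x$ is exactly the condition $g_x(s_0,jW)=\{0\}$. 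In particular $\mathfrak{g}^W(\ell)_\mathbb{C}=U_\mathbb{C}\oplus(S_x)_\mathbb{C}\subset\mathfrak{p}$, and since every bracket lands in $U_\mathbb{C}\subset\mathfrak{p}\cap\overline{\mathfrak{p}}$, both $\mathfrak{p}$ and $\mathfrak{p}+\overline{\mathfrak{p}}$ are automatically subalgebras.

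Next I would prove isotropy and maximality. The form $B$ vanishes on the complexified radical $U_\mathbb{C}\oplus(S_x)_\mathbb{C}$, so only the summand $\{q+ijq\mid q\in P^x\}$ remains; since $P^x$ is $j$-invariant this is the $(-i)$-eigenspace of $j$ on $(P^x)_\mathbb{C}$. As $\omega_x:=g_x(\cdot,j\cdot)$ is $j$-invariant, its $\mathbb{C}$-bilinear extension obeys $\omega_x(j\xi,j\eta)=\omega_x(\xi,\eta)$; evaluating on two such eigenvectors gives $\omega_x(\xi,\eta)=(-i)^2\omega_x(\xi,\eta)=-\omega_x(\xi,\eta)$, hence $0$, so $\mathfrak{p}$ is isotropic. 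For maximality I would check the dimension count: using the decomposition $W=P^x\oplus S_x$ of \eqref{eq:224}, so that $\dim_\mathbb{R}W=2\dim_\mathbb{C}P^x+\dim_\mathbb{R}S_x$, one obtains
\begin{align*}
\dim_\mathbb{C}\mathfrak{p}=\dim U+\dim_\mathbb{R}S_x+\dim_\mathbb{C}P^x=\tfrac12\left(\dim_\mathbb{R}\mathfrak{g}^W+\dim_\mathbb{R}\mathfrak{g}^W(\ell)\right),
\end{align*}
the required half-dimension.

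For positivity I would note that the radical directions $U_\mathbb{C}\oplus(S_x)_\mathbb{C}$ contribute nothing, so the defining Hermitian form is governed by its value on $a=q+ijq$ with $q\in P^x$, where it equals
\begin{align*}
-i\langle x,[a,\overline{a}]\rangle_U=8\langle x,Q(q,q)\rangle_U=8\,g_x(q,q).
\end{align*}
This is precisely the quantity in \eqref{positivity238}, hence nonnegative (indeed strictly positive for $q\neq 0$, as $g_x|_{P^x}$ is definite, $N_x$ being the kernel of $g_x|_P$). Thus $\mathfrak{p}$ is the positive polarization, in agreement with the realization of the induced representation on the holomorphic Fock space $\mathcal{F}_x$.

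The main work is the Pukanszky condition, which I would take in the standard complex-polarization form $\Ad^*(D)\ell=\ell+\mathfrak{e}^\perp$, where $\mathfrak{d}=\mathfrak{p}\cap\mathfrak{g}^W$, $\mathfrak{e}=(\mathfrak{p}+\overline{\mathfrak{p}})\cap\mathfrak{g}^W$ and $D=\exp\mathfrak{d}$. First I would identify $\mathfrak{d}$: a real element $u+s+(q+ijq)\in\mathfrak{p}$ (with $u\in U_\mathbb{C},s\in(S_x)_\mathbb{C},q\in P^x$) has $W$-imaginary part $\im(s)+jq$, and reality forces $jq\in S_x$; since also $jq\in P^x$ and $W=P^x\oplus S_x$ is direct, $jq=0$, whence $\mathfrak{d}=U\oplus S_x$ and $D=G^{S_x}$, as recorded. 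Because $\mathfrak{p}+\overline{\mathfrak{p}}=U_\mathbb{C}\oplus(S_x\oplus P^x)_\mathbb{C}=(\mathfrak{g}^W)_\mathbb{C}$, we get $\mathfrak{e}=\mathfrak{g}^W$ and $\mathfrak{e}^\perp=\{0\}$. Finally a direct coadjoint computation yields
\begin{align*}
\left(\Ad^*({\bf n}(x_0,s_0))\ell\right)(x_2,v_2)=\ell(x_2,v_2)+4g_x(s_0,jv_2)\quad(s_0\in S_x,\ v_2\in W),
\end{align*}
and the correction vanishes since $S_x$ is the radical of $g_x(\cdot,j\cdot)|_W$; thus $\Ad^*(D)\ell=\{\ell\}=\ell+\mathfrak{e}^\perp$. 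The one delicate point is to use the correct ($\mathfrak{e}^\perp$, not $\mathfrak{d}^\perp$) form of the condition: here both sides collapse to the single point $\{\ell\}$ precisely because $S_x$ is the full radical of $B$, which is the structural reason the group $D=G^{S_x}$ fixes $\ell$.
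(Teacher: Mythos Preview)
Your argument is correct and follows essentially the paper's route: maximality from the decomposition \eqref{eq:224} and positivity from \eqref{positivity238}. The only differences are cosmetic---your isotropy check is direct (radical plus $(-i)$-eigenspace) where the paper cites Proposition~\ref{prop:109}, and you spell out the Pukanszky condition, which the paper leaves implicit (it is immediate here because $\mathfrak{d}=U\oplus S_x$ coincides with the full stabilizer $\mathfrak{g}^W(\ell)$, so $D$ fixes $\ell$ while $\mathfrak{e}=\mathfrak{g}^W$ forces $\mathfrak{e}^\perp=0$).
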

\begin{proof}
The isotropicity of $\mathfrak{p}$ follows from Proposition \ref{prop:109}.
Also, recalling $\mathfrak{g}_1\oplus S_{x}=\mathfrak{g}^W({x})$ and \eqref{eq:224}, we see the maximality of $\mathfrak{p}$.
The positivity of $\mathfrak{p}$ follows from Proposition \ref{prop:109} and \eqref{positivity238}.
\end{proof}

Let $\mathcal{H}(X_{{x},\sigma},\mathfrak{p},G^W)$ be the space of smooth functions $\phi$ on $G^W$ satisfying
\begin{align}
\phi(g\exp b)&=e^{-\langle X_{{x},\sigma},b\rangle}\phi(g)\quad(g\in G^W,b\in\mathfrak{d})\label{eq:411},\\
&\int_{G^W/D}|\phi|^2\,dm_{G^W/D}<\infty,\label{eq:412}\\
dR(a)\phi&=-i\langle X_{{x},\sigma},a\rangle\phi\quad(a\in\mathfrak{p}),\label{eq:413}
\end{align}
where $m_{G^W/D}$ denotes a nonzero $G^W$-invariant measure on $G^W/D$, and for $a_1, a_2\in\mathfrak{g}^W$, we define $dR(a_1+ia_2)\phi\in C^\infty(G^W)$ by
\begin{align*}
dR(a_1+ia_2)\phi(g):=\frac{d}{dt}\Bigr|_{t=0}\phi(ge^{ta_1})+i\frac{d}{dt}\Bigr|_{t=0}\phi(ge^{ta_2})\quad(g\in G^W).
\end{align*}
The holomorphically induced representation $\rho=\rho(X_{{x},\sigma},\mathfrak{p},G^W)$ is given by
\begin{align*}
\rho(g)\phi(g'):=\phi(g^{-1}g')\quad(\phi\in\mathcal{H}(X_{{x},\sigma},\mathfrak{p},G^W),g,g'\in G^W).
\end{align*}
For $v\in{P}$, let $v^{x}\in{P}^{x}$ be the orthogonal projection of $v$ on ${P}^{x}$ with respect to $\re h$.
Let $\Psi_{x}:\mathcal{H}(X_{{x},\sigma},\mathfrak{p},G^W)\rightarrow C^\infty({P})$ be given by
\begin{align*}
\Psi_{x}\phi(q)=e^{\langle{x},Q(q,q)\rangle_U}\phi({\bf n}(0,q^{x}))\quad(q\in{P}).
\end{align*}
\begin{proposition}\label{prop:245}
The following hold.
\begin{enumerate}[label*=\textup{(\arabic*)}]
\item
The map $\Psi_{x}$ gives a $G^W$-intertwining operator from $\mathcal{H}(X_{{x},\sigma},\mathfrak{p},G^W)$ onto $\mathcal{F}_{x}$.
\item
$\mathcal{H}(X_{{x},\sigma},\mathfrak{p},G^W)\neq \{0\}$.
\end{enumerate}
\end{proposition}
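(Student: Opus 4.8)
The plan is to realize $\mathcal{H}(X_{x,\sigma},\mathfrak{p},G^W)$ as a space of functions on $G^W/D$, to identify the latter with $P^{x}$, and to recognize $\Psi_{x}$ as an explicit Bargmann-type transform. First I would use \eqref{eq:224} to write $W=P^{x}\oplus S_{x}$, so that $D=G^{S_{x}}$ gives $G^W=\{\mathbf{n}(0,q)\mid q\in P^{x}\}\cdot D$ and $q\mapsto\mathbf{n}(0,q)D$ is a diffeomorphism $P^{x}\xrightarrow{\sim}G^W/D$. The covariance \eqref{eq:411} shows that every $\phi\in\mathcal{H}(X_{x,\sigma},\mathfrak{p},G^W)$ is determined by $\Phi(q):=\phi(\mathbf{n}(0,q))$ on $P^{x}$, and that the norm in \eqref{eq:412} equals $\int_{P^{x}}|\Phi|^{2}\,d\mu^{x}$ once $m_{G^W/D}$ is normalized to $\mu^{x}$. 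Since $|\Psi_{x}\phi(q)|^{2}e^{-2\langle x,Q(q,q)\rangle_U}=|\Phi(q)|^{2}$, the condition \eqref{eq:412} is then literally equivalent to membership of $\Psi_{x}\phi$ in the weighted $L^{2}$-space defining $\mathcal{F}_{x}$.

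The remaining membership conditions for $\mathcal{F}_{x}$ are $N_{x}$-invariance and holomorphy. For $N_{x}$-invariance I would use $N_{x}\subset\ker(g_{x})$ (Lemma \ref{lem:101}), which gives both $(q+q_2)^{x}=q^{x}$ and $\langle x,Q(q+q_2,q+q_2)\rangle_U=\langle x,Q(q,q)\rangle_U$ for $q_2\in N_{x}$, so that $\Psi_{x}\phi$ is constant along $N_{x}$. For holomorphy the key point is that $X_{x,\sigma}$ vanishes on $P^{x}$, so for $q_0\in P^{x}$ the polarization condition \eqref{eq:413} at $a=q_0+ijq_0\in\mathfrak{p}$ reads $dR(q_0+ijq_0)\phi=0$. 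Transporting the right translations by $\mathbf{n}(0,tq_0)$ and $\mathbf{n}(0,tjq_0)$ to the cross-section through the group cocycle $\tfrac12[v_1,v_2]=2\im Q(v_1,v_2)$ and \eqref{eq:411}, this becomes a first-order equation for $\Phi$ whose inhomogeneous term is cancelled, after multiplication by $e^{\langle x,Q(q,q)\rangle_U}$, by the holomorphic derivative of the prefactor; together with the $N_{x}$-invariance this yields the Cauchy--Riemann system for $\Psi_{x}\phi$ on $P$, so $\Psi_{x}\phi\in\mathcal{F}_{x}$. The intertwining identity $\Psi_{x}\circ\rho(g)=V_{x,\sigma}(g)\circ\Psi_{x}$ I would verify directly: writing $g=\mathbf{n}(x_0,q_0+s_0)$, one moves $g^{-1}$ past the cross-section representative via the group law and absorbs the resulting $D$-factor through \eqref{eq:411}, reproducing exactly the cocycle $e^{-i\langle x,x_0+2iQ(q,q_0)-iQ(q_0,q_0)\rangle_U}e^{i\langle\sigma,s_0\rangle}$ and the translation $F(q-q_0)$ defining $V_{x,\sigma}$.

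Injectivity is immediate: if $\Psi_{x}\phi=0$ then $\Phi\equiv0$ on $P^{x}$, and \eqref{eq:411} forces $\phi\equiv0$ on $G^W=\mathbf{n}(0,P^{x})\cdot D$. For surjectivity in part (1) and non-triviality in part (2) I would run the construction in reverse: given $F\in\mathcal{F}_{x}$, set $\Phi(q):=e^{-\langle x,Q(q,q)\rangle_U}F(q)$ on $P^{x}$ and define $\phi(\mathbf{n}(0,q)\exp b):=e^{-\langle X_{x,\sigma},b\rangle}\Phi(q)$ for $q\in P^{x}$, $b\in\mathfrak{d}$. Well-definedness and smoothness follow from the product decomposition, \eqref{eq:411} and \eqref{eq:412} hold by construction and the norm identity above, and \eqref{eq:413} on the holomorphic directions is the reverse of the computation in the previous paragraph, valid because $F$ is holomorphic. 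Then $\Psi_{x}\phi=F$, and taking any $F\neq0$ (for instance the constant $1$, which lies in $\mathcal{F}_{x}$ by the normalization of $\mu^{x}$) yields $\mathcal{H}(X_{x,\sigma},\mathfrak{p},G^W)\neq\{0\}$. Alternatively, part (2) follows at once from Proposition \ref{lem:244} together with the Auslander--Kostant theory \cite{auslander_polarization_1971}, since a positive polarization satisfying the Pukanszky condition yields a nonzero (irreducible) holomorphically induced representation.

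The main obstacle I anticipate is the holomorphy computation: one must show that the single equation \eqref{eq:413} along the directions $q_0+ijq_0$ ($q_0\in P^{x}$), after transport to the cross-section and multiplication by the factor $e^{\langle x,Q(q,q)\rangle_U}$, is precisely the Cauchy--Riemann system for $\Psi_{x}\phi$. This relies on the isotropy of $\mathfrak{p}$ (Proposition \ref{prop:109}), which guarantees that the group cocycle contributes only the $\im Q$ term that the holomorphic derivative of the prefactor absorbs, and on careful bookkeeping of the conventions relating $j$, the complexification, and the bilinear extension of $\langle\cdot,\cdot\rangle_U$. A secondary technical point is the normalization matching $m_{G^W/D}$ with $\mu^{x}$, so that the two $L^{2}$-conditions coincide rather than merely being proportional.
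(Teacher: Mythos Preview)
Your proposal is correct and follows essentially the same route as the paper: use \eqref{eq:411} and \eqref{eq:413} to obtain holomorphy of $\Psi_x\phi$, use \eqref{eq:412} for the $L^2$-condition, verify the intertwining identity by a direct group-law computation, and deduce part~(2) from $\mathcal{F}_x\neq\{0\}$ via surjectivity. The paper's proof is considerably terser (it omits the $N_x$-invariance check and the explicit surjectivity argument you spell out), and the inclusion $N_x\subset\ker(g_x)$ you invoke is here a standing assumption on $x$ (the condition \eqref{eq:113}) rather than a consequence of Lemma~\ref{lem:101}.
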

\begin{proof}
(1)
The conditions \eqref{eq:411} and \eqref{eq:413} imply that $\Psi_{x}\phi\in\mathcal{O}({P})$, and \eqref{eq:412} shows that $\Psi_{x}$ is an isometry onto $\mathcal{F}_{x}$ up to a scalar multiplication.
For $x_0\in U$, $s_0\in S_{x}$, $q_0\in{P}^{x}$, we see from \eqref{eq:411} that
\begin{align*}
&\Psi_{x}\rho({\bf n}(x_0,s_0+q_0))\phi(q)
\\&=\phi({\bf n}(0,q^{x}-q_0){\bf n}(-2\im Q(q^{x}-q_0,-s_0)-x_0-2\im Q(s_0+q_0,q^{x}),-s_0))
\\&\quad\cdot e^{\langle{x},Q(q,q)\rangle_U}
\\&=e^{-i\langle{x},x_0+2\im Q(q_0,q^{x})+iQ(q,q)\rangle_U}e^{-i\langle\sigma,-s_0\rangle}\phi({\bf n}(0,q^{x}-q_0))
\\&=e^{-i\langle{x},x_0\rangle_U}e^{2\langle{x},Q(q,q_0)\rangle_U}e^{-\langle {x},Q(q_0,q_0)\rangle_U}e^{i\langle\sigma,s_0\rangle}\Psi_{x}\phi(q-q_0)
\\&=V_{{x},\sigma}({\bf n}(x_0,q_0+s_0))\Psi_{x}\phi(q),
\end{align*}
and we are led to the conclusion.

(2) Since $\mathcal{F}_{x}\neq \{0\}$, the assertion follows from (1).
\end{proof}
By Fujiwara \cite{fujiwara_unitary_1974}, Propositions \ref{lem:244} and \ref{prop:245}(2) imply the following.
\begin{proposition}\label{th:246}
$\rho$ is irreducible and the orbit $G^WX_{{x},\sigma}$ is mapped by the Kirillov-Bernat correspondence to the unitary equivalence class of $\rho$.
\end{proposition}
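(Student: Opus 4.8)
The plan is to invoke the Auslander–Kostant theory in the form given by Fujiwara, which asserts that once one has a positive complex polarization satisfying the Pukanszky condition \emph{and} the associated holomorphically induced representation is nonzero, the representation is automatically irreducible and corresponds under the Kirillov–Bernat map to the coadjoint orbit through the chosen linear functional. Since Proposition \ref{lem:244} establishes that $\mathfrak{p}$ is a positive polarization at $X_{x,\sigma}$ satisfying the Pukanszky condition, and Proposition \ref{prop:245}(2) guarantees $\mathcal{H}(X_{x,\sigma},\mathfrak{p},G^W)\neq\{0\}$, the hypotheses of Fujiwara's theorem are met. Thus the first step is simply to quote \cite{fujiwara_unitary_1974} and conclude the irreducibility of $\rho=\rho(X_{x,\sigma},\mathfrak{p},G^W)$.

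For the second assertion, I would recall that the Kirillov–Bernat correspondence $\widehat{\rho_{G^W}}$ sends a linear form $\lambda\in(\mathfrak{g}^W)^*$ to the irreducible representation obtained by holomorphic induction from any positive polarization at $\lambda$ satisfying the Pukanszky condition, and that this class depends only on the coadjoint orbit $G^W\lambda$, not on $\lambda$ or the choice of polarization. Applying this with $\lambda=X_{x,\sigma}$, the orbit $G^W X_{x,\sigma}$ is precisely the orbit whose image under $\widehat{\rho_{G^W}}$ is the equivalence class of $\rho$. Since $G^W$ is a connected, simply connected nilpotent Lie group, all of the hypotheses needed for the Kirillov–Bernat theory to apply verbatim are in place (as set up in Sect. \ref{subsect:orbit}), so this identification is immediate.

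The only genuine verification that is not purely formal is ensuring that the representation $\rho$ realized on $\mathcal{H}(X_{x,\sigma},\mathfrak{p},G^W)$ is the \emph{same} representation that Fujiwara's theory attaches to the orbit; but this is built into the statement of the theorem, which says precisely that the holomorphically induced representation from a Pukanszky polarization realizes the orbit representation. Hence there is no real obstacle here: the proposition is a direct corollary of the two preceding propositions together with the cited theorem. I would therefore keep the proof to essentially one sentence, citing \cite{fujiwara_unitary_1974} and Propositions \ref{lem:244} and \ref{prop:245}(2), with the understanding that the nontrivial content has already been discharged in verifying the polarization hypotheses and the nonvanishing of the induced space.
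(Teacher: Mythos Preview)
Your proposal is correct and matches the paper's approach exactly: the paper states the proposition as a direct consequence of Fujiwara's theorem \cite{fujiwara_unitary_1974} together with Propositions \ref{lem:244} and \ref{prop:245}(2), with no additional argument. Your suggestion to reduce the proof to a single sentence citing these three ingredients is precisely what the paper does.
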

Let us denote by $\Ad ^*$ the coadjoint representation of $G^W$.
\begin{remark}\label{rem:226}
We can see from Proposition \ref{prop:109} that
\begin{align*}
\langle \mathrm{Ad}^*({\bf n}(x,v))X_{{x},\sigma},s\rangle=\langle\sigma,s\rangle
\end{align*}
for $x\in U$, $v\in W$, and $s\in S_{x}$.
Hence Propositions \ref{prop:245}(1) and \ref{th:246} show that the following conditions for $\sigma,\sigma'\in(S_{x})^*$ are equivalent:
\begin{enumerate}[label=(\roman*)]
\item
$V_{{x},\sigma}\simeq V_{{x},\sigma'}$\quad(as unitary representations of $G^W$);
\item
$\sigma=\sigma'$.
\end{enumerate}
\end{remark}
Noting Remark \ref{decomposition242} and \eqref{eq:224}, for $F\in\mathcal{F}_{x}$, let $\Phi_{{x},\sigma}F$ be the function on ${\mathcal{S}(\Omega,Q)}$ defined by
\begin{align*}
\Phi_{{x},\sigma}F(z,q+s):=e^{\langle{x},iz+Q(s,\overline{s}^{x})\rangle_U}e^{-i\langle \sigma,s\rangle}F(q)\quad(q\in{P}, s\in S^{x}\oplus jS^{x}).
\end{align*}
\begin{proposition}\label{prop:4.5}
When $\sigma|_{N_{x}}=0$, the operator $\Phi_{{x},\sigma}$ intertwines $V_{{x},\sigma}$ with $\pi_0$.
\end{proposition}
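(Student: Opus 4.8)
The plan is to verify directly that $\Phi_{x,\sigma}$ is a $G^W$-intertwining operator, i.e., that for all $g\in G^W$ and $F\in\mathcal{F}_x$ one has $\Phi_{x,\sigma}\circ V_{x,\sigma}(g)=\pi_0(g)\circ\Phi_{x,\sigma}$, where $\pi_0$ is the geometric representation of $G^W$ on $\mathcal{O}(\mathcal{S}(\Omega,Q))$ given by $\pi_0(g)f=f\circ g^{-1}$. First I would check that $\Phi_{x,\sigma}F$ genuinely lies in $\mathcal{O}(\mathcal{S}(\Omega,Q))$: the factor $e^{\langle x,iz\rangle_U}$ is holomorphic in $z$, the factor $F(q)$ is holomorphic in $q\in P$ by definition of $\mathcal{F}_x$, and the remaining factor $e^{\langle x,Q(s,\overline{s}^x)\rangle_U}e^{-i\langle\sigma,s\rangle}$ must be seen to depend holomorphically on $s\in S^x\oplus jS^x$. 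Here the Hermitian map $Q(s,\overline{s}^x)$ is holomorphic in $s$ precisely because conjugation $\overline{(\cdot)}^x$ is applied to the second slot while $Q$ is complex-linear in the first; the hypothesis $\sigma|_{N_x}=0$ guarantees that the formula is well-defined modulo $N_x$, consistently with the defining invariance $F(q_1+q_2)=F(q_1)$ for $q_2\in N_x$ in $\mathcal{F}_x$.

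Next I would carry out the core computation. It suffices to check the intertwining identity on generators ${\bf n}(x_0,q_0+s_0)$ with $x_0\in U$, $q_0\in P^x$, $s_0\in S_x$, using the explicit action formula for $V_{x,\sigma}$ together with the explicit formula for the affine transformation ${\bf n}(x_0,v_0)$ recorded before Proposition~\ref{prop:234}. On the one side, $\Phi_{x,\sigma}V_{x,\sigma}({\bf n}(x_0,q_0+s_0))F(z,q+s)$ expands, by substituting the action of $V_{x,\sigma}$ on $F$, into a product of exponential factors times $F(q-q_0)$. On the other side, $\pi_0({\bf n}(x_0,q_0+s_0))\Phi_{x,\sigma}F(z,q+s)=\Phi_{x,\sigma}F({\bf n}(-x_0,-q_0-s_0)(z,q+s))$, and applying the formula for ${\bf n}(-x_0,-v_0)$ produces $z$ shifted by $-x_0+2iQ(q+s,-q_0-s_0)+iQ(q_0+s_0,q_0+s_0)$ and the $V$-coordinate shifted by $-q_0-s_0$. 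I would then expand $\Phi_{x,\sigma}$ at this shifted point and collect the exponential factors, pairing the terms coming from $Q(q+s,\cdot)$, $Q(\cdot,\cdot)$, and the $\sigma$-linear form against those produced by the $V_{x,\sigma}$-action.

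The main obstacle, and where the real content of the hypotheses enters, will be reconciling the cross terms in $Q$ that mix the $q$-directions ($P^x$) with the $s$-directions ($S^x\oplus jS^x$). The naive expansion of $Q(q+s,q_0+s_0)$ produces mixed contributions such as $Q(q,s_0)$, $Q(s,q_0)$, and $Q(s,s_0)$ that are not separately accounted for by the two factors $F(q-q_0)$ and $e^{\langle x,Q(s,\overline{s}^x)\rangle_U}$. To control these I would invoke Proposition~\ref{prop:109} (the vanishing $\langle x,\im Q(W^{\perp,g_x},W^{\perp,g_x})\rangle_U=\{0\}$), together with the orthogonality relations built into the decomposition \eqref{eq:224}, namely $W=P^x\oplus S_x=P\oplus S^x$ and the fact that $S^x\subset S_x=jW^{\perp,g_x}\cap W$, to show that the pairings $\langle x,Q(q,s_0)\rangle_U$ and the relevant real or imaginary parts of the mixed terms either vanish or combine into the $e^{\langle x,\cdots\rangle_U}$ structure already present. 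The condition $\sigma|_{N_x}=0$ will be used again precisely to make the $\sigma$-dependent factors match, since $s_0\in S_x$ decomposes across $S^x$ and $N_x$ and only the $S^x$-part survives in $\Phi_{x,\sigma}$. Once all exponential factors are shown to agree, the equality of the two sides follows, establishing that $\Phi_{x,\sigma}$ intertwines $V_{x,\sigma}$ with $\pi_0$.
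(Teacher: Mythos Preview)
Your proposal is correct and follows essentially the same approach as the paper: a direct verification of the intertwining identity on generators of $G^W$, using the explicit formulas for $V_{x,\sigma}$, $\Phi_{x,\sigma}$, and the affine action $\mathbf{n}(x_0,v_0)$. The only notable difference is that the paper decomposes the generator as $q_0+s_0$ with $q_0\in P$ and $s_0\in S^x$ (the decomposition aligned with the definition of $\Phi_{x,\sigma}$), whereas you propose $q_0\in P^x$ and $s_0\in S_x$ (the decomposition aligned with the definition of $V_{x,\sigma}$); since these two decompositions of $W$ differ only by elements of $N_x$, the hypothesis $\sigma|_{N_x}=0$ together with the $N_x$-invariance built into $\mathcal{F}_x$ and the inclusion $N_x\subset\ker(g_x)$ reconciles them in either direction, and the cross-term cancellations you anticipate (via the orthogonality encoded in \eqref{eq:224} and Proposition~\ref{prop:109}) are exactly what the paper's terse computation implicitly uses.
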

\begin{proof}
For $F\in\mathcal{F}_{{x},\sigma}$, $x_0\in U$, $q_0\in{P}$, $s_0\in S^{x}$, we have
\begin{align*}
&\pi_0(\exp x_0)\Phi_{{x},\sigma}F(z,q+s)
\\&=\Phi_{{x},\sigma}F(z-x_0,q+s)
\\&=e^{-i\langle \sigma,s\rangle}e^{\langle {x},iz-ix_0+Q(s,\overline{s}^{x})\rangle_U}F(q)=\Phi_{{x},\sigma}V_{{x},\sigma}(\exp x_0)F(z,q+s)
\end{align*}
and
\begin{align*}
&\pi_0(\exp(q_0+s_0))\Phi_{{x},\sigma}F(z,q+s)
\\&=\Phi_{{x},\sigma}F({\bf n}(0,-q_0-s_0)(z,q+s))
\\&=e^{\langle{x},iz+Q(s,\overline{s}^{x})\rangle_U}e^{-i\langle\sigma,s\rangle}e^{i\langle\sigma,s_0\rangle}e^{-\langle{x},Q(q_0,q_0)\rangle_U}e^{2\langle{x},Q(q,q_0)\rangle_U}F(q-q_0),
\end{align*}
and $\sigma|_{N_{x}}=0$ implies that this equals
\begin{align*}
e^{\langle{x},iz+Q(s,\overline{s}^{x})\rangle_U}e^{-i\langle\sigma,s\rangle}V_{{x},\sigma}(\exp(q_0+s_0))F(q)=\Phi_{{x},\sigma}V_{{x},\sigma}(\exp(q_0+s_0))F(z,q+s).
\end{align*}
\end{proof}
\begin{remark}\label{rem:117}
When $\sigma|_{N_{x}}=0$, the map $\Phi_{{x},\sigma}$ does not depend on the choice of $S^{x}$.
\end{remark}
We can define a natural Hilbert space structure on $\Phi_{{x},\sigma}(\mathcal{F}_{x})$.
Let us denote by $K^{{x},\sigma}\in\Gamma_{G^W}({\mathcal{S}(\Omega,Q)})$ its reproducing kernel.
\begin{proposition}\label{prop:114}
One has
\begin{equation}\begin{split}\label{eq:248}
&K^{{x},\sigma}(z,q+s,z',q'+s')
\\&=\exp({i\langle{x},z-\overline{z'}-2iQ(q,q')-iQ(s,\overline{s}^x)-iQ(\overline{s'}^x,s')\rangle_U})
\\&\quad\cdot e^{-i\langle\sigma, s-\overline{s'}^{x}\rangle}
\end{split}
\end{equation}
with $q,q'\in{P}, s,s'\in S^{x}\oplus jS^{x}$.
\end{proposition}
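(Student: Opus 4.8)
The plan is to define the Hilbert space structure on $\Phi_{{x},\sigma}(\mathcal{F}_{x})$ by declaring $\Phi_{{x},\sigma}$ to be unitary onto its image (it is injective, since the multiplier below is nowhere vanishing), and then to obtain $K^{{x},\sigma}$ by transporting the reproducing kernel of the Fock space $\mathcal{F}_{x}$ along $\Phi_{{x},\sigma}$. The key observation is that $\Phi_{{x},\sigma}$ is a \emph{multiplier operator}: writing $c(z,q+s):=e^{\langle{x},iz+Q(s,\overline{s}^{x})\rangle_U}e^{-i\langle\sigma,s\rangle}$, one has $\Phi_{{x},\sigma}F(z,q+s)=c(z,q+s)\,F(q)$, where the argument $q\in{P}$ enters only through its class in ${P}^{x}$ by the $N_{x}$-invariance built into $\mathcal{F}_{x}$. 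Hence for an orthonormal basis $\{e_n\}$ of $\mathcal{F}_{x}$ the functions $\{\Phi_{{x},\sigma}e_n\}$ form an orthonormal basis of the image, and summing $\Phi_{{x},\sigma}e_n(z,q+s)\,\overline{\Phi_{{x},\sigma}e_n(z',q'+s')}$ gives
\begin{align*}
K^{{x},\sigma}(z,q+s,z',q'+s')=c(z,q+s)\,\overline{c(z',q'+s')}\,K_{\mathcal{F}_{x}}(q,q'),
\end{align*}
where $K_{\mathcal{F}_{x}}$ is the reproducing kernel of $\mathcal{F}_{x}$. I regard this transport identity as the structural core of the argument.

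First I would compute $K_{\mathcal{F}_{x}}$. Since $N_{x}\subset\ker(g_{x})$, the positive semi-definite form $g_{x}|_{{P}\times{P}}$ descends to a positive-definite Hermitian form $H(q,q'):=2\langle{x},Q(q,q')\rangle_U$ on ${P}^{x}$, whose real part is $g_{x}$, and $\mathcal{F}_{x}$ is exactly the Bargmann--Fock space of entire functions on ${P}^{x}$ that are square-integrable against $e^{-H(q,q)}\,d\mu^{x}$. Choosing complex coordinates on ${P}^{x}$ in which $H$ is the standard Hermitian form reduces $\mathcal{F}_{x}$ to a tensor product of one-dimensional Fock spaces; a monomial computation (whose overall constants are absorbed by the normalization $\int_{{P}^{x}}e^{-2\langle{x},Q(q,q)\rangle_U}\,d\mu^{x}=1$) then yields
\begin{align*}
K_{\mathcal{F}_{x}}(q,q')=e^{H(q,q')}=e^{2\langle{x},Q(q,q')\rangle_U}\quad(q,q'\in{P}^{x}).
\end{align*}

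It remains to simplify $c(z,q+s)\,\overline{c(z',q'+s')}\,e^{2\langle{x},Q(q,q')\rangle_U}$. Using that ${x}$ is real, so that $\overline{\langle{x},u\rangle_U}=\langle{x},\overline{u}\rangle_U$ for $u\in U_\mathbb{C}$, together with the Hermitian property $\overline{Q(v,v')}=Q(v',v)$ and the fact that $\sigma$ is extended complex-linearly (whence $\overline{\langle\sigma,s'\rangle}=\langle\sigma,\overline{s'}^{x}\rangle$), the conjugate multiplier becomes $\overline{c(z',q'+s')}=e^{-i\langle{x},\overline{z'}\rangle_U}e^{\langle{x},Q(\overline{s'}^{x},s')\rangle_U}e^{i\langle\sigma,\overline{s'}^{x}\rangle}$. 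Collecting all exponents and rewriting the real exponentials as $2\langle{x},Q(q,q')\rangle_U=i\langle{x},-2iQ(q,q')\rangle_U$, and likewise for the $Q(s,\overline{s}^{x})$ and $Q(\overline{s'}^{x},s')$ terms, produces exactly \eqref{eq:248}. The only delicate points are the rigorous identification of $\mathcal{F}_{x}$ with a Bargmann--Fock space together with the descent to ${P}^{x}$, and the careful bookkeeping of the complex conjugations in this last step; both become routine once the multiplier structure is isolated, which is why the transport identity of the first paragraph is the crux.
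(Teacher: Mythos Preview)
Your proposal is correct and follows essentially the same approach as the paper: both transport the reproducing kernel of $\mathcal{F}_{x}$ (which the paper simply states as $K^{\mathcal{F}_{x}}(q,q')=e^{2\langle{x},Q(q,q')\rangle_U}$) through the isometry $\Phi_{{x},\sigma}$ to obtain $K^{{x},\sigma}$. The only cosmetic difference is that the paper verifies the reproducing property directly by checking $(\Phi_{{x},\sigma}(F),f)_{\Phi_{{x},\sigma}(\mathcal{F}_{x})}=\Phi_{{x},\sigma}F(z',q'+s')$ for the candidate $f=e^{\langle{x},-i\overline{z'}+Q(\overline{s'}^{x},s')\rangle_U}e^{i\langle\sigma,\overline{s'}^{x}\rangle}\Phi_{{x},\sigma}(K_{q'}^{\mathcal{F}_{x}})$, whereas you use the equivalent orthonormal-basis expansion $K^{{x},\sigma}=\sum_n\Phi_{{x},\sigma}e_n\,\overline{\Phi_{{x},\sigma}e_n}$; the multiplier identity you isolate is exactly what the paper's computation encodes.
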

\begin{proof}
Fixing $(z',q'+s')\in{\mathcal{S}(\Omega,Q)}$ with $q'\in{P}$ and $s'\in S^{x}$, define
\begin{align*}
f:=e^{\langle{x},-i\overline{z'}+Q(\overline{s'}^{x},s')\rangle_U} e^{i\langle \sigma,\overline{s'}^{x}\rangle}\Phi_{{x},\sigma}(K_{q'}^{\mathcal{F}_{x}})\in\mathcal{O}(\mathcal{S}(\Omega,Q)).
\end{align*}
Then for $F\in\mathcal{F}_{x}$, we have
\begin{align*}
(\Phi_{{x},\sigma}(F),f)_{\Phi_{{x},\sigma}(\mathcal{F}_{x})}&=(F,e^{\langle{x},-i\overline{z'}+Q(\overline{s'}^{x},s')\rangle_U}e^{i\langle\sigma,\overline{s'}^{x}\rangle}K_{q'}^{\mathcal{F}_{x}})_{\mathcal{F}_{x}}
\\&=e^{\langle{x},iz'+Q(s',\overline{s'}^{x})\rangle_U}e^{-i\langle\sigma,s'\rangle}F(q')
\\&=\Phi_{{x},\sigma}F(z',q'+s'),
\end{align*}
which shows that $K^{{x},\sigma}(z,v,z',v')=f(z,v)$.
Now, we get the desired expression from $K^{\mathcal{F}_{x}}(q,q')=e^{2\langle{x},Q(q,q')\rangle_U}\,(q,q'\in{P})$.
\end{proof}
Comparing \eqref{eq:249} and \eqref{eq:248}, we conclude that the representation $(\pi_0,\mathcal{H})$ corresponds to the coadjoint orbit $G^WX_{{x},\sigma}$ by the Kirillov-Bernat map with ${x}={\nu}$ and $\sigma$ the zero extension of $\chi$ along with the decomposition
\begin{align*}
S_{\nu}=S^{\nu}\oplus N_{\nu}.
\end{align*}
Taking into account Remarks \ref{rem:226} and \ref{rem:117}, the following is just a corollary of Theorem \ref{th:4.4} and Proposition \ref{prop:114}.
\begin{corollary}\label{cor:112}
Suppose that a real subspace $W\subset V$ satisfies $\im Q(S,S)=\{0\}$.
Then the representation $\pi_0$ of $G^W$ is multiplicity-free.
\end{corollary}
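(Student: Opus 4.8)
The plan is to establish condition \ref{condi:mainmf} of Theorem \ref{th:main}, namely the injectivity of the natural map $\ext(\Gamma_{G^W}(\mathcal{S}(\Omega,Q)))/\mathbb{R}^\times \to \widehat{G^W}$ that sends each extremal invariant reproducing kernel to the equivalence class of the representation it carries. By Theorem \ref{th:243} this injectivity is precisely the statement that $\pi_0$ is multiplicity-free in the sense of Definition \ref{def:4.1}\,(2), so it suffices to prove it. Under the standing hypothesis $\im Q(S,S)=\{0\}$, Theorem \ref{th:4.4} expresses every extremal kernel in the form \eqref{eq:249} with parameters $\nu\in U$ and $\chi\in(S^\nu)^*$; comparing \eqref{eq:249} with the kernel \eqref{eq:248} of Proposition \ref{prop:114}, this kernel equals $K^{\nu,\sigma}$, where $\sigma\in(S_\nu)^*$ is the zero extension of $\chi$ under $S_\nu=S^\nu\oplus N_\nu$. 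By Propositions \ref{prop:4.5}, \ref{prop:245} and \ref{th:246}, the class carried by $K^{\nu,\sigma}$ is the image under the Kirillov--Bernat correspondence of the coadjoint orbit $G^WX_{\nu,\sigma}$.

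It then remains to show that the parameter $(x,\sigma)$ is recovered from the orbit $G^WX_{x,\sigma}$, for then distinct parameters give inequivalent representations while equal parameters give, by the explicit formula \eqref{eq:248} (well defined by Remark \ref{rem:117}), the same kernel. To recover $x$, I would use that $U\cong\mathfrak{g}_1$ is the center of $\mathfrak{g}^W$, since $[(x_1,v_1),(x_2,v_2)]=([v_1,v_2],0)\in U$. Because $\Ad(g)$ fixes every central element, the restriction $X_{x,\sigma}|_U=-x$ is a coadjoint invariant: concretely $\langle\Ad^*({\bf n}(x_1,v_1))X_{x,\sigma},(u,0)\rangle=\langle X_{x,\sigma},(u,0)\rangle=-\langle x,u\rangle_U$ for all $u\in U$, independently of $(x_1,v_1)$. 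This pins down $x$. With $x$ thus determined, Remark \ref{rem:226} shows that the surviving datum $\sigma\in(S_x)^*$ is itself an invariant of the class, so $\sigma$ is determined as well.

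This yields the injectivity and hence the corollary. The proof is essentially an assembly of the earlier results: the classification of $\ext(\Gamma_{G^W}(\mathcal{S}(\Omega,Q)))$ in Theorem \ref{th:4.4}, the realization of the corresponding classes as holomorphically induced representations in Propositions \ref{prop:245} and \ref{th:246}, and the separation of parameters in Remarks \ref{rem:226} and \ref{rem:117}. The only genuine step is the recovery of the central parameter $x$ from the orbit, and even this is the standard observation that the infinitesimal central character is a coadjoint invariant; I therefore anticipate no serious obstacle, the substantive difficulty having already been resolved in producing the explicit parametrization under the hypothesis $\im Q(S,S)=\{0\}$.
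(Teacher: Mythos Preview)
Your proposal is correct and follows essentially the same approach as the paper: identify each extremal kernel via Theorem \ref{th:4.4} with some $K^{\nu,\sigma}$ through the comparison of \eqref{eq:249} and \eqref{eq:248}, attach to it the coadjoint orbit $G^WX_{\nu,\sigma}$ via Propositions \ref{prop:4.5}, \ref{prop:245}, \ref{th:246}, and then separate the parameters using Remarks \ref{rem:226} and \ref{rem:117}. The only difference is that you make explicit the recovery of $x$ as the (negative of the) restriction to the center $\mathfrak{g}_1\cong U$, whereas the paper takes this for granted since $\nu$ was defined from the outset of \S\ref{subsect:extremal} as the central parameter of the orbit attached to $\mathcal{H}$.
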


\subsection{Specific form of an admissible parametrization}\label{subsect:specificform}
In this subsection, we consider an example of $W$ constructed from projections defined by a Jordan frame and the Jordan algebra representation.
We present an explicit formula for $h(s, A^{x} \overline{s}^S) \,(s\in S, x\in \Lambda)$, which is part of the description of our admissible parametrization in \eqref{eq:Ax}.
We observe that as a function of $S\oplus jS$, it reduces to a scalar multiplication of $h(s, \overline{s}^S)$.

Let $e_1, e_2,\cdots, e_r$ be a Jordan frame of $U$.
For the sake of simplicity, we assume that $R_x=0$ iff $x=0$ for $x\in U$ and $\langle e_1,e_1\rangle_U=1$.
For $x\in U$, the Peirce decomposition allows us to write $x=x_1+x_{1/2}+x_0$ with $x_\lambda\in U(e_1,\lambda):=\{u\in U\mid e_1u=\lambda u\}\,(\lambda=0,1/2,1)$.
For $v,v'\in V$, we see that
\begin{align*}
\langle x,Q(R_{e_1}v,R_{e_1}v')\rangle_U=2h(v,R_{P(e_1)x}v')=2h(v,R_{x_1}v')=\langle x_1,e_1\rangle_U\langle{e_1},Q(v,v')\rangle_U.
\end{align*}
Hence we can find a real subspace $S\subset R_{e_1}V$ such that $\im Q(S,S)=\{0\}$ and $S\oplus jS=R_{e_1}V$.
Let $e':=e_2+e_3+\cdots+e_r$, $P:=R_{e'}V$.
For $u,u'\in U$, put $\boxx{u}{u'}:=T_{uu'}+[T_u,T_{u'}]\in\mathfrak{g}(\Omega)$.
\begin{lemma}[c.f. {\cite[Lemma VI.3.1]{faraut_analysis_1994}}]\label{lem:adjointofFK}
For $y\in U(e_1,1/2)$, one has
\begin{align*}
\exp{}^t(\boxx{2y}{e_1})(x_1+x_0)=\left(x_1+2T_{e_1}(T_y)^2x_0\right)+2T_yx_0+x_0.
\end{align*}
\end{lemma}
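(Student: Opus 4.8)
The plan is to compute the operator $\boxx{2y}{e_1}$ explicitly, take its transpose with respect to $\langle\cdot,\cdot\rangle_U$, and then exploit the Peirce decomposition relative to $e_1$ to collapse the exponential to a finite sum. First I would rewrite $\boxx{2y}{e_1}=T_{(2y)e_1}+[T_{2y},T_{e_1}]$; since $y\in U(e_1,1/2)$ gives $(2y)e_1=2(ye_1)=y$, this equals $T_y+2[T_y,T_{e_1}]$. Because $T_y\in\mathfrak{p}_0$ is symmetric while $[T_y,T_{e_1}]\in[\mathfrak{p}_0,\mathfrak{p}_0]\subset\mathfrak{k}_0$ is antisymmetric with respect to $\langle\cdot,\cdot\rangle_U$, the transpose is
\begin{align*}
A:={}^t(\boxx{2y}{e_1})=T_y-2[T_y,T_{e_1}].
\end{align*}

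Next I would determine how $A$ acts on each Peirce space $U(e_1,\lambda)$ $(\lambda=1,\tfrac12,0)$, using that $T_{e_1}$ acts as the scalar $\lambda$ on $U(e_1,\lambda)$ together with the Peirce multiplication rules (in particular $U(e_1,1/2)\cdot U(e_1,0)\subset U(e_1,1/2)$, $U(e_1,1/2)\cdot U(e_1,1)\subset U(e_1,1/2)$, and $U(e_1,1/2)\cdot U(e_1,1/2)\subset U(e_1,1)\oplus U(e_1,0)$). A direct evaluation of $[T_y,T_{e_1}]$ on each space shows that $A$ strictly raises the Peirce grading by one step: $A|_{U(e_1,1)}=0$, that $Av=2T_{e_1}T_yv\in U(e_1,1)$ for $v\in U(e_1,1/2)$, and that $Ax_0=2T_yx_0\in U(e_1,1/2)$ for $x_0\in U(e_1,0)$. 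In particular $A$ is nilpotent with $A^3=0$, so $\exp A=\Id+A+\tfrac12 A^2$.

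Finally I would apply this to $x_1+x_0$. Since $Ax_1=0$ we have $A(x_1+x_0)=2T_yx_0$, and since $T_yx_0\in U(e_1,1/2)$,
\begin{align*}
A^2x_0=A(2T_yx_0)=4T_{e_1}(T_y)^2x_0\in U(e_1,1).
\end{align*}
Substituting yields $\exp A\,(x_1+x_0)=x_1+x_0+2T_yx_0+2T_{e_1}(T_y)^2x_0$, which is the asserted identity after grouping the $U(e_1,1)$, $U(e_1,1/2)$, and $U(e_1,0)$ components. The only genuinely delicate point is the Peirce bookkeeping in the middle step: correctly verifying that $A$ raises the grading by exactly one level (so that the series truncates) and that the $U(e_1,1)$-component of $(T_y)^2x_0$ is precisely $T_{e_1}(T_y)^2x_0$. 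Everything else reduces to the scalar action of $T_{e_1}$ and is routine.
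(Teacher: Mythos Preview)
Your proposal is correct and follows essentially the same approach as the paper: compute the action of ${}^t(\boxx{2y}{e_1})$ on each Peirce component $U(e_1,\lambda)$, observe that it raises the grading by one step (so is nilpotent of order three), and then sum the truncated exponential series. The paper's proof simply records the three identities ${}^t(\boxx{2y}{e_1})x_1=0$, ${}^t(\boxx{2y}{e_1})x_{1/2}=2T_{e_1}T_yx_{1/2}$, ${}^t(\boxx{2y}{e_1})x_0=2T_yx_0$ and leaves the exponential computation implicit; you have supplied the details behind those identities (via the explicit formula $A=T_y-2[T_y,T_{e_1}]$ and the Peirce multiplication rules), but the argument is the same.
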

\begin{proof}
For $y\in U(e_1,1/2)$, we have
\begin{align*}
{}^t(\boxx{2y}{e_1})x_1&=0,\\
{}^t(\boxx{2y}{e_1})x_{1/2}&=2T_{e_1}T_yx_{1/2},\\
{}^t(\boxx{2y}{e_1})x_0&=2T_yx_0,
\end{align*}
which leads to the desired equality.
\end{proof}
\begin{lemma}\label{lem:nilpotent}
The following hold.
\begin{enumerate}[label*=\textup{(\arabic*)}]
\item
$e^{\beta(\boxx{2y}{e_1})}|_P=\Id_P$.\label{nilpotent1}
\item
$e^{\beta(\boxx{2y}{e_1})^*}|_{S\oplus jS}=\Id_{S\oplus jS}$.\label{nilpotent2}
\item
$R_{e_1}e^{\beta(\boxx{2y}{e_1})}=R_{e_1}$.\label{nilpotent3}
\end{enumerate}
\end{lemma}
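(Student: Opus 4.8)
The plan is to reduce all three assertions to a single explicit formula for the operator $\beta(\boxx{2y}{e_1})$ and to exploit that it turns out to be nilpotent of the simplest possible kind. Write $p_1:=2R_{e_1}$, so that $p_1$ is a self-adjoint idempotent (by the orthogonality relations in the proof of Theorem \ref{th:103}) with $p_1V=R_{e_1}V=S\oplus jS$ and, since $\sum_k 2R_{e_k}=\Id_V$, with $(\Id_V-p_1)V=R_{e'}V=P$. Because $y\in U(e_1,1/2)$ gives $e_1y=\tfrac12 y$, one has $(2y)e_1=y$, hence $\boxx{2y}{e_1}=T_y+2[T_y,T_{e_1}]$. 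Applying the Lie algebra homomorphism $\beta$, using $\beta(T_y)=R_y$, $\beta(T_{e_1})=R_{e_1}=\tfrac12 p_1$, and that $\beta$ preserves brackets, I would obtain $\beta(\boxx{2y}{e_1})=R_y+[R_y,p_1]$.

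The main step is the anticommutation relation $R_y=p_1R_y+R_yp_1$. I would derive it from \eqref{eq:230} with $A=T_{e_1}$ and $x=y$: here $R_{e_1}$ is self-adjoint so $\beta(T_{e_1})^*=R_{e_1}$, and $T_{e_1}y=\tfrac12 y$ yields $\tfrac12 R_y=R_{e_1}R_y+R_yR_{e_1}$, which is exactly $R_y=p_1R_y+R_yp_1$ after multiplying by $2$. Substituting $R_y-p_1R_y=R_yp_1$ into the formula of the previous paragraph collapses it to $\beta(\boxx{2y}{e_1})=2R_yp_1$. Sandwiching the anticommutation relation by $p_1$ and using $p_1^2=p_1$ gives $p_1R_yp_1=0$; consequently $\beta(\boxx{2y}{e_1})^2=4R_y(p_1R_yp_1)=0$, so the exponential is the single linear term $e^{\beta(\boxx{2y}{e_1})}=\Id_V+2R_yp_1$ (and likewise $e^{\beta(\boxx{2y}{e_1})^*}=\Id_V+2p_1R_y$).

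With this formula the three claims are immediate. For \ref{nilpotent1}, any $w\in P=(\Id_V-p_1)V$ satisfies $p_1w=0$, so $(\Id_V+2R_yp_1)w=w$. For \ref{nilpotent3}, $R_{e_1}e^{\beta(\boxx{2y}{e_1})}=R_{e_1}+2R_{e_1}R_yp_1=R_{e_1}+p_1R_yp_1=R_{e_1}$. For \ref{nilpotent2}, the identity $p_1R_yp_1=0$ forces the range $\beta(\boxx{2y}{e_1})V=2R_yp_1V\subset(\Id_V-p_1)V$, which is orthogonal to $R_{e_1}V=S\oplus jS$ since $p_1$ is self-adjoint; hence $S\oplus jS\subset\ker\beta(\boxx{2y}{e_1})^*$ and $e^{\beta(\boxx{2y}{e_1})^*}$ acts as the identity there. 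I expect the only genuine subtlety to be the algebraic bookkeeping yielding $R_y=p_1R_y+R_yp_1$ and its consequence $p_1R_yp_1=0$; once these are in place, the nilpotency $\beta(\boxx{2y}{e_1})^2=0$ reduces each exponential to a two-term sum and all three statements follow from the idempotency and self-adjointness of $p_1$.
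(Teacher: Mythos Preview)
Your proof is correct and follows essentially the same approach as the paper. Both arguments hinge on the anticommutation relation $R_y=2R_{e_1}R_y+2R_yR_{e_1}$ derived from \eqref{eq:230}, and its consequence $R_{e_1}R_yR_{e_1}=0$; the paper checks directly that $\beta(\boxx{2y}{e_1})R_{e'}=2R_{e'}R_yR_{e'}=0$ and $\beta(\boxx{2y}{e_1})^*R_{e_1}=2R_{e_1}R_yR_{e_1}=0$ (deducing \ref{nilpotent3} by taking the adjoint), while you go one step further and obtain the closed formula $\beta(\boxx{2y}{e_1})=2R_yp_1$ together with its square-zero property---a slightly more explicit route to the same destination.
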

\begin{proof}
We have $\beta(\boxx{2y}{e_1})R_{e'}=2R_{e'}R_{y}R_{e'}=0$, which implies \ref{nilpotent1}.
On the other hand, we see that $\beta(\boxx{2y}{e_1})^*R_{e_1}=2R_{e_1}R_yR_{e_1}=0$, which implies \ref{nilpotent2} and by taking the adjoint, we see \ref{nilpotent3}.
\end{proof}
\begin{proposition}\label{prop:factor}
Suppose that $N_x\subset \ker(g_x)$.
Then there exists $y\in U(e_1,1/2)$ such that $x_{1/2}=2T_yx_0$.
\end{proposition}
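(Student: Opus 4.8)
The plan is to produce $y$ by an explicit pseudo-inverse construction and to verify the desired identity at the level of the represented operators, using the injectivity of $x\mapsto R_x$. Since $2T_yx_0$ and $x_{1/2}$ both lie in $U(e_1,1/2)$, it suffices to find $y\in U(e_1,1/2)$ with $R_{x_{1/2}}=2(R_yR_{x_0}+R_{x_0}R_y)$: the right-hand side equals $R_{2T_yx_0}$ by $R_{ab}=R_aR_b+R_bR_a$, and then injectivity of $R$ gives $x_{1/2}=2T_yx_0$.

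First I would record the block structure coming from Peirce theory. Because $x\mapsto 2R_x$ is a unital Jordan homomorphism, $p:=2R_{e_1}$ is a self-adjoint idempotent, so $V=V_1\oplus P$ orthogonally with $V_1:=R_{e_1}V$ and $P=R_{e'}V=(\Id_V-p)V$. From $R_{e_1}R_{x_\lambda}+R_{x_\lambda}R_{e_1}=R_{e_1x_\lambda}=\lambda R_{x_\lambda}$ and the fact that $p$ has only the eigenvalues $0$ and $1$, one reads off that $R_{x_0}$ preserves $P$ and annihilates $V_1$, while $R_{x_{1/2}}$ interchanges $V_1$ and $P$; the same holds for $R_{x_0'}$ with any $x_0'\in U(e_1,0)$. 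Writing $a_0:=R_{x_0}|_P$, which is self-adjoint on $(P,h)$, the definition of $g_x$ gives $g_x(q,q')=2\re h(a_0q,q')$ for $q,q'\in P$, and for such $q$ the relation $R_xq=0$ is equivalent to $R_{x_{1/2}}q=0$ together with $a_0q=0$.

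Next I would translate the hypothesis. Since $\ker(a_0)\subset N_x$ always holds, the assumption $N_x\subset\ker(g_x)$ forces $N_x=\ker(a_0)$ (in particular $a_0$ is semidefinite) and $R_{x_{1/2}}|_{N_x}=0$; by self-adjointness of $R_{x_{1/2}}$ this also yields $R_{x_{1/2}}(V_1)\subset N_x^{\perp,\,\re h}\cap P$, i.e.\ $R_{x_{1/2}}$ maps $V_1$ into the image of $a_0$. These two consequences are exactly what the verification below will consume.

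Finally I would build $y$ and check the operator identity. Applying the spectral theorem (Theorem~\ref{th:228}) inside the subalgebra $U(e_1,0)$, write $x_0=\sum_k\mu_kf_k$ with orthogonal idempotents $f_k$, and set $x_0^{(-1)}:=\sum_{\mu_k\neq0}\mu_k^{-1}f_k\in U(e_1,0)$ and $y:=2x_0^{(-1)}x_{1/2}\in U(e_1,1/2)$. Using $R_{ab}=R_aR_b+R_bR_a$ and the mapping properties above, all but one term in $2(R_yR_{x_0}+R_{x_0}R_y)$ vanish on each of $V_1$ and $P$, and the surviving contribution is governed by $R_{x_0}R_{x_0^{(-1)}}$, which restricts on $P$ to $\tfrac14$ times the orthogonal projection $\Pi$ onto the image of $a_0$ (that is, onto $N_x^{\perp,\,\re h}\cap P$). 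On $V_1$ one gets $2R_{x_0}R_ys=4R_{x_0}R_{x_0^{(-1)}}R_{x_{1/2}}s=\Pi R_{x_{1/2}}s=R_{x_{1/2}}s$, since $R_{x_{1/2}}s$ already lies in the image of $a_0$; on $P$ one gets $2R_yR_{x_0}q=R_{x_{1/2}}\Pi q=R_{x_{1/2}}q$, since $(\Id_P-\Pi)q\in N_x$ and $R_{x_{1/2}}|_{N_x}=0$. Hence $2(R_yR_{x_0}+R_{x_0}R_y)=R_{x_{1/2}}$, and injectivity of $R$ concludes that $x_{1/2}=2T_yx_0$. I expect the main obstacle to be the careful bookkeeping of the scalar factors produced by the homomorphism $x\mapsto 2R_x$ (it is precisely this factor that forces $y=2x_0^{(-1)}x_{1/2}$ rather than $x_0^{(-1)}x_{1/2}$), together with checking that the hypothesis $N_x\subset\ker(g_x)$ is exactly what renders the projection $\Pi$ invisible to $R_{x_{1/2}}$ in both computations.
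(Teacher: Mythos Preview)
Your proof is correct and runs parallel to the paper's up to the translation of the hypothesis into the operator statement $R_{x_{1/2}}(V_1)\subset R_{x_0}P$ (equivalently $R_{x_{1/2}}|_{N_x}=0$), but diverges in the decisive step. After reaching this point the paper factors $R_{e'}R_{x_{1/2}}=R_{x_0}B$ for some $B\in\mathfrak{gl}(V)$, rewrites $R_{x_{1/2}}=2(R_{x_0}B+B^*R_{x_0})$, and then shows that for any idempotent $u\in U(e_1,0)$ annihilated by $x_0$ one has $R_uR_{x_{1/2}}R_u=0$; applying this to the spectral idempotents $e_k'$ of $x_0$ with zero eigenvalue and invoking the injectivity of $R$ yields $P(e_k')x_{1/2}=0$, from which the existence of $y$ is read off via the Peirce decomposition of $U(e_1,1/2)$. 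You instead write down the candidate $y=2x_0^{(-1)}x_{1/2}$ directly via a pseudo-inverse in $U(e_1,0)$ and verify $R_{2T_yx_0}=R_{x_{1/2}}$ blockwise on $V_1\oplus P$, using that $4R_{x_0}R_{x_0^{(-1)}}$ restricts on $P$ to the orthogonal projection onto $\operatorname{im}a_0$ and that the hypothesis makes this projection invisible to $R_{x_{1/2}}$ on both sides. Your route is more constructive---it produces an explicit $y$ and the verification is a self-contained block computation---whereas the paper's argument is more indirect but highlights the Jordan-algebraic content, namely that $x_{1/2}$ has no component in the Peirce blocks $U_{1k}$ attached to the kernel of $T_{x_0}|_{U(e_1,0)}$.
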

\begin{proof}
First, for $v\in V$, we have the following equivalences:
\begin{align*}
R_{e'}v\in N_x\Leftrightarrow h(v,R_{e'}R_xR_{e'}V)=\{0\}\Leftrightarrow h(v,R_{x_0}V)=\{0\},
\end{align*}
\begin{align*}
R_{e'}v\in \ker(g_x)\Leftrightarrow h(v,R_{e'}R_xV)=\{0\}\Leftrightarrow h(v,R_{e'}(R_{x_{1/2}}+R_{x_0})V)=\{0\}.
\end{align*}
Hence we see from $N_x\subset \ker(g_x)$ that
\begin{align*}
(R_{x_0}V)^{\perp,\,\re h}\subset(R_{e'}R_{x_{1/2}}V)^{\perp,\,\re h},
\end{align*}
and hence $R_{e'}R_{x_{1/2}}V\subset R_{x_0}V$.
Then we can find $B\in\mathfrak{gl}(V)$ such that $R_{e'}R_{x_{1/2}}=R_{x_0}B$.
Thus we get
\begin{align}\label{eq:factor1}
R_{x_{1/2}}=2(R_{x_0}B+B^*R_{x_0}).
\end{align}
So, if $u\in U$ satisfies
\begin{align}\label{eq:factor3}
\boxx{x_0}{u}=0,\quad T_ux_0=0,
\end{align}
and then we have $R_{x_0}R_{u}=R_uR_{x_0}=0$, and it follows from \eqref{eq:factor1} that
\begin{align}\label{eq:factor2}
R_uR_{x_{1/2}}R_u=0.
\end{align}

Next, noting that $U(e_1,0)$ is a subalgebra of $U$, let $x_0=\sum_{k=2}^r\lambda_k{e_k}'\,(\lambda_k\neq 0\text{ iff }2\leq k\leq r_0)$ be the decomposition in Theorem \ref{th:228}.
For $u={e_k}'$, with $k>r_0$, we have \eqref{eq:factor3} and hence $R_{P({e_k}')x_{1/2}}=0$ by \eqref{eq:factor2}.
By our assumption, we obtain $P({e_k}')x_{1/2}=0$, which shows the assertion.
\end{proof}
\begin{proof}[Proof of Theorem \ref{th:main2}]
Let $x\in U$ satisfy the assumption of Proposition \ref{prop:factor} and let $u:=(x_1-2T_{e_1}{(T_y)}^2x_0)+x_0$.
Then we have for $s\in S\oplus jS$,
\begin{equation}\begin{split}\label{eq:findingscalar}
h(s,A^{x}\overline{s}^S)
&=\langle x,Q(p^xs,\overline{p^xs}^x)\rangle_U
\\&=2h(R_xp^xs,\overline{p^xs}^x)
\\&=2h(R_xp^xs,\overline{s})=2h(R_{e^{{}^t(\boxx{2y}{e_1})}u}p^xs,\overline{s}),
\end{split}\end{equation}
where in the third and last equalities, we have used the fact $R_{x}p^xs\in S\oplus jS$ and Lemma \ref{lem:adjointofFK}, respectively.
By Lemma \ref{lem:105}, we have
\begin{align*}
R_{e^{{}^t(\boxx{2y}{e_1})}u}=e^{\beta(\boxx{2y}{e_1})^*}R_{u}e^{\beta(\boxx{2y}{e_1})}.
\end{align*}
On the other hand, by Lemma \ref{lem:nilpotent}\ref{nilpotent2}, we have
\begin{align*}
e^{\beta(\boxx{2y}{e_1})^*}R_{u}e^{\beta(\boxx{2y}{e_1})}p^xs=R_{u}e^{\beta(\boxx{2y}{e_1})}p^xs\in S\oplus jS.
\end{align*}
Furthermore, we can write
\begin{align*}
p^xs=s+q,\quad e^{\beta(\boxx{2y}{e_1})}(s+q)=s+q'
\end{align*}
for some $q,q'\in P$ by Lemma \ref{lem:nilpotent}\ref{nilpotent1}, \ref{nilpotent3}.
Therefore, the last expression of \eqref{eq:findingscalar} equals
\begin{align*}
2h(R_{u}{(s+q')},\overline{s})&=h(2R_{x_1-2T_{e_1}{(T_y)}^2x_0}s,\overline{s})
\\&=\langle x_1-2T_{e_1}{(T_y)}^2x_0,e_1\rangle_Uh(s,\overline{s}).
\end{align*}
\end{proof}

\section{Multiplicity-free unitary representation on the Bergman space}\label{sect:L2}
In this section, we show \ref{condi:mainl2}$\Rightarrow$\ref{condi:mainreal} of Theorem \ref{th:main}.
We use the multiplicity-free direct integral decomposition of the unitary representation of $G^V$ on the space of all $L^2$ holomorphic functions.
Additionally, in Proposition \ref{prop:5.2}, we provide a description of the restrictions of the irreducible representations $V_{x}=V_{x,0}\,(x \in U)$ of $G^V$ to $G^W$, which serves as a crucial component in our proof.
The condition \ref{condi:mainl2} ensures that for each $x \in \Omega$, the representation $V_{x}|_{G^W}$ is multiplicity-free, leading to \ref{condi:mainreal} as demonstrated in Theorem \ref{th:220}.

Let $W\subset V$ be a real subspace.
We do not impose on $W$ any other conditions.
We assume that $\mu_U$ stands for the pushforward measure of the Lebesgue measure by an isometry from the space $\mathbb{R}^N$ with the standard inner product onto $(U,\langle\cdot,\cdot\rangle_U)$.
We denote by $\mu$ the natural complete measure on $U_\mathbb{C}\oplus V$ induced by $\mu_U$ and $\mu_V$.
Let
\begin{align*}
L_a^2({\mathcal{S}(\Omega,Q)}):=L^2({\mathcal{S}(\Omega,Q)},\mu)\cap\mathcal{O}({\mathcal{S}(\Omega,Q)}).
\end{align*}
We see an integral expression for the Bergman kernel of ${\mathcal{S}(\Omega,Q)}$.
For $u\in\Omega$, let
\begin{align*}
I(u):=\int_\Omega e^{-2\langle u,y\rangle_U}\,d\mu_U(y),\quad I_Q(u):=\int_V e^{-2\langle u,Q(v,v)\rangle_U}\,d\mu_V(v).
\end{align*}
\begin{theorem}[\cite{gindikin_analysis_1964}]\label{th:5.3}
For $(z,v),(z',v')\in{\mathcal{S}(\Omega,Q)}$, the reproducing kernel $K$ of $L_a^2({\mathcal{S}(\Omega,Q)})$ is given by
\begin{align*}
K(z,v,z',v')=\frac{1}{(2\pi)^N}\int_\Omega e^{i\langle u,z-\overline{z'}-2iQ(v,v')\rangle_U}I(u)^{-1}I_Q(u)^{-1}\,d\mu_U(u).
\end{align*}
\end{theorem}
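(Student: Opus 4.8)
The plan is to realize $L_a^2(\mathcal{S}(\Omega,Q))$ as a direct integral of weighted Bargmann--Fock spaces over $\Omega$ by a partial Fourier--Laplace transform in $\re z$, and then to assemble the reproducing kernel from those of the fibers. First I would fix $F\in L_a^2(\mathcal{S}(\Omega,Q))$, write $z=x+iy$ with $x,y\in U$, and Fourier transform $F$ in $x$ alone for fixed $(y,v)$; this is legitimate because, once $y-Q(v,v)\in\Omega$, the point $(x+iy,v)$ lies in $\mathcal{S}(\Omega,Q)$ for every $x\in U$. The Cauchy--Riemann equations in $z$ force the $y$-dependence of the transform to be $e^{-\langle u,y\rangle_U}$, so $\widehat F(u,y,v)=e^{-\langle u,y\rangle_U}G(u,v)$ with $G$ independent of $y$, and holomorphy in $v$ gives $G(u,\cdot)\in\mathcal{O}(V)$. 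Fourier inversion then yields
\[
F(z,v)=\frac{1}{(2\pi)^N}\int_U e^{i\langle u,z\rangle_U}\,G(u,v)\,d\mu_U(u).
\]

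Next I would compute $\|F\|^2$. Applying Plancherel in $x$ and then integrating over $y\in Q(v,v)+\Omega$, the substitution $y=Q(v,v)+y'$ produces the factor
\[
\int_{Q(v,v)+\Omega}e^{-2\langle u,y\rangle_U}\,d\mu_U(y)=e^{-2\langle u,Q(v,v)\rangle_U}I(u).
\]
Finiteness forces $u\in\Omega=\Omega^*$, so $G(u,\cdot)$ vanishes off $\Omega$, and one obtains
\[
\|F\|^2=\frac{1}{(2\pi)^N}\int_\Omega I(u)\Bigl(\int_V|G(u,v)|^2e^{-2\langle u,Q(v,v)\rangle_U}\,d\mu_V(v)\Bigr)d\mu_U(u).
\]
For $u\in\Omega$ the form $v\mapsto 2\langle u,Q(v,v)\rangle_U=4h(R_uv,v)$ is positive definite, by self-duality of $\Omega$ together with the $\Omega$-positivity of $Q$, so the inner integral is the squared norm in the Gaussian Fock space $\mathcal{F}_u\subset\mathcal{O}(V)$ with weight $e^{-2\langle u,Q(\cdot,\cdot)\rangle_U}$. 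Thus $F\mapsto G$ is a unitary isomorphism of $L_a^2(\mathcal{S}(\Omega,Q))$ onto the direct integral $\int_\Omega^\oplus\mathcal{F}_u\,(2\pi)^{-N}I(u)\,d\mu_U(u)$.

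Finally I would use that the reproducing kernel of a Gaussian Fock space is explicit: a monomial computation gives that $\mathcal{F}_u$ has reproducing kernel $I_Q(u)^{-1}e^{2\langle u,Q(v,v')\rangle_U}$, the normalization $I_Q(u)^{-1}$ being exactly the reciprocal of the Gaussian integral. Evaluating $F(z',v')=(2\pi)^{-N}\int_\Omega e^{i\langle u,z'\rangle_U}G(u,v')\,d\mu_U(u)$ through the reproducing property in each fiber and matching against $\langle F,K_{(z',v')}\rangle$, one reads off the $G$-transform of $K_{(z',v')}$ as $I(u)^{-1}I_Q(u)^{-1}e^{-i\langle u,\overline{z'}\rangle_U}e^{2\langle u,Q(\cdot,v')\rangle_U}$. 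Substituting into the inversion formula and combining exponents via $2\langle u,Q(v,v')\rangle_U=i\langle u,-2iQ(v,v')\rangle_U$ yields the stated formula.

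The main obstacle is making the Paley--Wiener step fully rigorous: one must justify the Fubini interchanges, verify that the spectrum of $G$ is genuinely carried by $\Omega$, and, most importantly, prove that $F\mapsto G$ is \emph{onto} the direct integral. Surjectivity (equivalently, completeness of the image) is what guarantees that the function computed above is the true reproducing kernel rather than merely a candidate reproducing a dense subspace; establishing it, rather than the algebra of exponents, is the delicate part of the argument.
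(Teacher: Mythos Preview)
The paper does not prove this theorem; it is stated with a citation to Gindikin and no argument is given. There is therefore nothing in the paper to compare your proposal against.

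Your sketch is the standard Paley--Wiener/Fock-space derivation and is essentially correct. A couple of minor remarks: the step ``finiteness forces $u\in\Omega$'' should more precisely read that $I(u)<\infty$ requires $u\in\Omega^*=\Omega$, and then one must also argue that for $u$ on the boundary of $\Omega$ the Gaussian weight $e^{-2\langle u,Q(v,v)\rangle_U}$ is degenerate so no nonzero holomorphic $G(u,\cdot)$ is square-integrable, hence the support is the \emph{open} cone $\Omega$. Your own diagnosis of the main gap (surjectivity of $F\mapsto G$) is accurate; one typically closes it either by showing directly that for $G$ in a dense subspace of the direct integral the inverse transform lands in $L_a^2$, or by verifying that the candidate kernel is already in $L_a^2(\mathcal{S}(\Omega,Q))$ in each variable and reproduces a dense set, which forces it to be the reproducing kernel.
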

In the special case that $W=V$, Propositions \ref{prop:4.5} and \ref{prop:114} imply that for $x\in U$ satisfying \eqref{positivity238}, the representation $V_{x}:=V_{{x},0}$ of $G^V$ can be realized in $\mathcal{O}({\mathcal{S}(\Omega,Q)})$, and the corresponding reproducing kernel is given by
\begin{align*}
e^{i\langle{x},z-\overline{z'}-2iQ(v,v')\rangle_U}
\end{align*}
up to a constant.
Hence, in view of Theorem \ref{th:5.3}, it follows that
\begin{equation}\label{eq:219}
L_a^2({\mathcal{S}(\Omega,Q)})\simeq\int_\Omega^\oplus V_{u}\,d\mu_U(u).
\end{equation}
Indeed, the map
\begin{align*}
&\int_\Omega^\oplus\mathcal{F}_{u}I(u)^{-1}I_Q(u)^{-1}\,d\mu_U(u)\ni f\\\quad&\mapsto\int_\Omega\Phi_{{x},0}f(u)(\cdot)I(u)^{-1}I_Q(u)^{-1}\,d\mu_U(u)\in\mathcal{O}(\mathcal{S}(\Omega,Q))
\end{align*}
is continuous, and
the kernel $\mathcal{K}$ of the map is given by
\begin{align*}
\mathcal{K}=\int_{\Omega_0}^\oplus \mathcal{F}_{u}I(u)^{-1}I_Q(u)^{-1}\,d\mu_U(u)
\end{align*}
for some measurable set $\Omega_0\subset\Omega$ due to \cite[Theorem 1.2]{mautner_unitary_1950}.
On the other hand, the Hilbert space
\begin{align*}
\int_\Omega^\oplus\mathcal{F}_{u}I(u)^{-1}I_Q(u)^{-1}\,d\mu_U(u)/\mathcal{K}
\end{align*}
is a $G^V$-invariant Hilbert subspace of $\mathcal{O}(\mathcal{S}(\Omega,Q))$ and has the same reproducing kernel as $L_a^2(\mathcal{S}(\Omega,Q))$, and hence isomorphic to it.
Let us show that $\mathcal{K}=\{0\}$.
For ${\bf z}\in\mathcal{S}(\Omega,Q)$, we have
\begin{align*}
&\int_{\Omega_0}(K_{\bf z}^{{u},0},K_{\bf z}^{{u},0})_{\Phi_{{u},0}(\mathcal{F}_{u})}I(u)^{-1}I_Q(u)^{-1}\,d\mu_U(u)
\\&=\int_{\Omega_0}K_{\bf z}^{{u},0}({\bf z})I(u)^{-1}I_Q(u)^{-1}\,d\mu_U(u)=0,
\end{align*}
which implies that $\mu_U(\Omega_0)=0$.
Suppose that ${x}\in U$ satisfies \eqref{positivity238} with ${P}$ replaced by $V$.
For $v\in V$, let ${x}_v\in {(V_\mathbb{R})}^*$ be given by
\begin{align*}
\langle {x}_v,v'\rangle:=\langle {x},[v,v']\rangle_U\quad(v'\in V).
\end{align*}
Taking a real subspace $S^{x}\subset V$ complementary to $W^{\perp,\,g_{x}}+jW$, we put $\mu_{x}:=\mu_{jS^{x}}$.
Let $p:(\mathfrak{g}^V)^*\rightarrow(\mathfrak{g}^W)^*$ be the canonical projection, and put
\begin{align*}
p_{x}(v):=\widehat{\rho_{G^W}}\circ p(-{x}+{x}_v)\quad(v\in V),
\end{align*}
where we regard $x$ as a vector in $U^*\subset(\mathfrak{g}^V)^*$ by means of $\langle\cdot,\cdot\rangle_U$.
Then we have the following proposition.
\begin{proposition}\label{prop:5.2}
We have
\begin{equation}\label{eq:227}
V_{x}|_{G^W}\simeq \int_{jS^{x}}^\oplus n(p_{x}(v))p_{x}(v)\,d\mu_{x}(v)
\end{equation}
with $n(p_{x}(v))\equiv 1\text{ or }\infty$.
More precisely, the following are equivalent:
\begin{enumerate}[label=\textup{(\roman*)}]
\item
$n(p_{x}(v))=1$ for all $v\in jS^{x}$;
\item[\textup{(i')}]
$V_{x}$ is multiplicity-free as a unitary representation of $G^W$;
\item
For any $v\in jW^{\perp,\, g_{x}}$, there exists $w\in W$ such that $v+w\in N_{x}$.\label{cond:iii}
\end{enumerate}
\end{proposition}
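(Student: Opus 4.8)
The plan is to read off the restriction $V_x|_{G^W}$ from the geometry of the coadjoint orbit attached to $V_x$ and then invoke the Corwin--Greenleaf formula (Theorem \ref{th:250}). By Proposition \ref{th:246} applied with $W=V$, the representation $V_x=V_{x,0}$ corresponds under the Kirillov--Bernat map to the orbit $\mathcal{O}:=G^VX_{x,0}\subset(\mathfrak{g}^V)^*$. First I would compute $\mathcal{O}$ explicitly: from the coadjoint formula $\Ad^*({\bf n}(x_0,v_0))X_{x,0}=-x+x_{v_0}$ together with $\langle x_v,v'\rangle=\langle x,[v,v']\rangle_U=4\langle x,\im Q(v,v')\rangle_U=4\omega_x(v,v')$, one gets $\mathcal{O}=\{-x+x_v\mid v\in V\}$, on which $x_v=0\Leftrightarrow v\in\ker(g_x)$ and $(x_v)|_W=0\Leftrightarrow v\in jW^{\perp,g_x}$. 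Hence $\mathcal{O}\cong V/\ker(g_x)$ as affine spaces, with the $G^V$-invariant measure corresponding to Lebesgue measure, and $G^W$ acts on $\mathcal{O}$ by the translations $[v]\mapsto[v+w]$, $w\in W$.

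Next I would analyze the composite $\mathcal{O}\xrightarrow{\,p\,}(\mathfrak{g}^W)^*\xrightarrow{\widehat{\rho_{G^W}}}\widehat{G^W}$, which sends $-x+x_v\mapsto p_x(v)$. Since $p(-x+x_v)$ and $p(-x+x_{v'})$ lie in the same $G^W$-coadjoint orbit exactly when $(x_{v-v'})|_W\in\{(x_w)|_W\mid w\in W\}$, that is, when $v-v'\in W+jW^{\perp,g_x}$, the fibres of this composite are precisely the cosets of $(W+jW^{\perp,g_x})/\ker(g_x)$. Because $S^x$ is chosen complementary to $W^{\perp,g_x}+jW$, applying $j$ gives $V=jS^x\oplus(W+jW^{\perp,g_x})$, so $jS^x$ is a global transversal, the map $v\mapsto p_x(v)$ is injective on $jS^x$, and the invariant measure on $\mathcal{O}$ pushes forward to a measure equivalent to $\mu_x=\mu_{jS^x}$. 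Feeding this into Theorem \ref{th:250} yields the direct-integral decomposition \eqref{eq:227}.

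For the multiplicity, I would count the $G^W$-orbits (the translation cosets of $(W+\ker(g_x))/\ker(g_x)$) inside a single fibre $v+W+jW^{\perp,g_x}$; this count equals the cardinality of $(W+jW^{\perp,g_x})/(W+\ker(g_x))$, which is independent of $v$ and is either $1$ or a continuum. Thus $n(p_x(v))$ is constant, equal to $1$ or $\infty$, and $n\equiv1$ iff $jW^{\perp,g_x}\subseteq W+\ker(g_x)$. The equivalence (i)$\Leftrightarrow$(i') is then immediate, as \eqref{eq:227} is a direct integral over the a.e.-distinct irreducibles $p_x(v)$, hence multiplicity-free precisely when $n\equiv1$. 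To match (iii): since $0\in N_x$, for $v\in jW^{\perp,g_x}$ the demand that some $v+w$ ($w\in W$) lie in $N_x$ is satisfiable exactly when $v\in W$ (take $w=-v$), so (iii) is equivalent to $jW^{\perp,g_x}\subseteq W$; this coincides with $jW^{\perp,g_x}\subseteq W+\ker(g_x)$ for the nondegenerate $x$ at issue—in particular for all $x\in\Omega$, where $R_x$ is invertible and $\ker(g_x)=\{0\}$.

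The main obstacle will be the orbit-counting and measure step: justifying that the fibres of $\widehat{\rho_{G^W}}\circ p$ break into $G^W$-orbits exactly as translation cosets, that the resulting multiplicity is genuinely constant along $\mathcal{O}$, and that the pushforward of the invariant measure is equivalent to $\mu_x$ on the transversal $jS^x$. All of these rest on the explicit affine model $\mathcal{O}\cong V/\ker(g_x)$ and the splitting $V=jS^x\oplus(W+jW^{\perp,g_x})$; tracking the radical $\ker(g_x)$ carefully is precisely what makes the identification of (iii) with $n\equiv1$ delicate.
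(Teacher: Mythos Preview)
Your approach is essentially the same as the paper's: both invoke the Corwin--Greenleaf formula (Theorem~\ref{th:250}), compute the coadjoint orbit $\mathcal{O}=\{-x+x_v\mid v\in V\}$ explicitly, identify the fibres of $\widehat{\rho_{G^W}}\circ p$ as cosets of $W+jW^{\perp,g_x}$, use $jS^x$ as a transversal (from the splitting $V=jS^x\oplus(W+jW^{\perp,g_x})$), and push forward the invariant measure to obtain \eqref{eq:227}. Your orbit-counting step, giving $n\equiv\#\bigl((W+jW^{\perp,g_x})/(W+\ker g_x)\bigr)$ and hence $n\equiv 1\Leftrightarrow jW^{\perp,g_x}\subset W+\ker g_x$, is exactly the mechanism behind the paper's statement that the multiplicity is independent of $\nu$ and equals $1$ or $\infty$; the paper itself defers the details of condition~\ref{cond:iii} to \cite[Proposition~5.3]{arashi_multiplicityfree_a} rather than proving them directly.

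There is one genuine loose end. You correctly show that \ref{cond:iii} is literally equivalent to $jW^{\perp,g_x}\subset W$ (since $N_x\subset P\subset W$ and $0\in N_x$), whereas your orbit count gives $n\equiv 1\Leftrightarrow jW^{\perp,g_x}\subset W+\ker g_x$. You then write that these ``coincide for the nondegenerate $x$ at issue---in particular for all $x\in\Omega$.'' But the hypothesis preceding Proposition~\ref{prop:5.2} only assumes $x$ satisfies \eqref{positivity238} with $P$ replaced by $V$, i.e.\ $R_x\ge 0$, which allows $\ker g_x\ne\{0\}$; for such $x$ the two inclusions need not agree (take $W$ a real form of $V$ and $\ker g_x$ a nonzero complex subspace, so $\ker g_x\not\subset W$). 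Thus your argument for (i)$\Leftrightarrow$\ref{cond:iii} is complete only when $\ker g_x=\{0\}$. This is harmless for the downstream application in Theorem~\ref{th:220}, which uses only $x\in\Omega$, and the paper likewise does not supply the degenerate case in the body; but as a proof of the proposition \emph{as stated} you should either appeal to \cite[Proposition~5.3]{arashi_multiplicityfree_a} as the paper does, or explain how the degenerate case is handled.
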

\begin{proof}
Since the details of the condition \ref{cond:iii} are dealt with in \cite[Proposition 5.3]{arashi_multiplicityfree_a}, we skip them here.
Let
\begin{align*}
V_{x}|_{G^W}\simeq\int_{\widehat{G^W}}n(\nu)\nu\,dm(\nu)
\end{align*}
be the disintegration of $V_{x}|_{G^W}$ in Theorem \ref{th:250}.
When $n(\nu)\geq 1$, we have $n(\nu)=1$ if and only if \ref{cond:iii} holds.
Note that the latter condition does not depend on $\nu\in\widehat{G^W}$, and hence is equivalent to $n(\nu)=1$ for all $\nu\in\widehat{G^W}$ with $n(\nu)\geq 1$.
Also we have $n(\nu)\in\{0,\infty\}$ if \ref{cond:iii} does not hold.
For a real subspace $W_0\subset V$, put
\begin{align*}
{x}_{W_0}:=\{{x}_v\mid v\in W_0\}\subset {(V_\mathbb{R})}^*.
\end{align*}
Taking a natural complete measure $m_1$ on $-{x}+{x}_V$, which is defined by finite measures on ${x}_{(jS^{x})}$, say $m_2$, and ${x}_{(jW^{\perp,\,g_{x}}+W)}$, equivalent to natural complete measures.
Let us consider
\begin{align*}
m:=(\widehat{\rho_{G^W}}\circ p)_*m_1.
\end{align*}
Then for an integrable function $f$, we have
\begin{align*}
\int_{\widehat{G^W}}f(\nu)\,dm(\nu)=\int_{-{x}+{x}_V}f(\widehat{\rho_{G^W}}(\nu|_{\mathfrak{g}^W}))\,dm_1(\nu),
\end{align*}
which equals
\begin{align*}
\int_{{x}_{(jS^{x})}}f(\widehat{\rho_{G^W}}(-{x}+\nu|_W))\,dm_2(\nu),
\end{align*}
and hence
\begin{align*}
\int_{jS^{x}}f(p_{x}(v))\frac{dm_2'}{d\mu_{x}}({x}_v)\,d\mu_{x}(v)
\end{align*}
up to a constant, where $m_2'$ is the pushforward measure of $m_2$ by the inverse mapping of $jS^{x}\ni v\mapsto {x}_v\in{x}_{(jS^{x})}$.
This gives the desired expression.
\end{proof}
Now we shall prove \ref{condi:mainl2} $\Rightarrow$ \ref{condi:mainreal} of Theorem \ref{th:main}.
\begin{theorem}\label{th:220}
If $(\pi_0,L_a^2({\mathcal{S}(\Omega,Q)}))$ is multiplicity-free as a unitary representation of $G^W$, then one has $\im Q(S,S)=\{0\}$.
\end{theorem}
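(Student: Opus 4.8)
The plan is to extract the conclusion from the direct integral decomposition \eqref{eq:219} together with the fiberwise analysis of Proposition \ref{prop:5.2}. Restricting \eqref{eq:219} to the subgroup $G^W\subseteq G^V$ gives $L_a^2(\mathcal{S}(\Omega,Q))|_{G^W}\simeq\int_\Omega^\oplus V_u|_{G^W}\,d\mu_U(u)$. Since the commutant of a direct integral contains the direct integral of the fiberwise commutants, commutativity of $\End_{G^W}(L_a^2(\mathcal{S}(\Omega,Q)))$ forces $V_u|_{G^W}$ to be multiplicity-free for $\mu_U$-almost every $u\in\Omega$. Every such $u$ meets the hypothesis \eqref{positivity238} of Proposition \ref{prop:5.2}, because $Q$ is $\Omega$-positive and $\Omega=\Omega^*$, so $\langle u,Q(v,v)\rangle_U\geq 0$. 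Hence by the equivalence of (i$'$) and \ref{cond:iii} in Proposition \ref{prop:5.2}, condition \ref{cond:iii} holds for almost every $u\in\Omega$.

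Next I would turn condition \ref{cond:iii} into an inclusion of subspaces. Since $N_u\subseteq P=W\cap jW\subseteq W$, condition \ref{cond:iii} at $u$ forces every $v\in jW^{\perp,\,g_u}$ to lie in $W$, i.e.\ $jW^{\perp,\,g_u}\subseteq W$. Using $g_u(v,w)=2\re h(R_uv,w)$, exactly as in the proof of Lemma \ref{lem:111}, one has $jW^{\perp,\,g_u}=R_u^{-1}(jW)^{\perp,\,\re h}=R_u^{-1}S$ for $u\in\Omega$, where $R_u$ is invertible. Thus $R_u^{-1}S\subseteq W$ for almost every $u\in\Omega$. Because $x\mapsto 2R_x$ is a unital Jordan homomorphism, $R_u^{-1}$ is a positive multiple of $R_{u^{-1}}$, and the inversion $u\mapsto u^{-1}$ is a $\mu_U$-null-set-preserving bijection of $\Omega$; replacing $u$ by $u^{-1}$ then yields $R_vS\subseteq W$ for almost every $v\in\Omega$.

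Finally I would compute directly. Note that $s\in S=(jW)^{\perp,\,\re h}$ is equivalent to $h(s,w)\in\mathbb{R}$ for all $w\in W$, since $\re h(s,jw)=\pm\im h(s,w)$. Fix $s_1,s_2\in S$. For almost every $v\in\Omega$ we have $R_vs_1\in W$, whence $h(R_vs_1,s_2)=\overline{h(s_2,R_vs_1)}\in\mathbb{R}$, so that $\langle v,Q(s_1,s_2)\rangle_U=2h(R_vs_1,s_2)$ is real and $\langle v,\im Q(s_1,s_2)\rangle_U=0$. As this holds on a dense subset of the open cone $\Omega$ and $v\mapsto\langle v,\im Q(s_1,s_2)\rangle_U$ is continuous, it vanishes on all of $\Omega$, hence on $U=\Omega-\Omega$; nondegeneracy of $\langle\cdot,\cdot\rangle_U$ gives $\im Q(s_1,s_2)=0$, that is, $\im Q(S,S)=\{0\}$.

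I expect the main obstacle to be the passage from almost-everywhere multiplicity-freeness of the fibers to a pointwise algebraic statement that can be fed into the final computation. Proposition \ref{prop:5.2} only delivers condition \ref{cond:iii} on a set of full measure, and the delicate point is to relocate this information: one must use the invertibility of $R_u$ on $\Omega$ to rewrite \ref{cond:iii} as $R_u^{-1}S\subseteq W$, and then the inversion symmetry of $\Omega$ to obtain $R_vS\subseteq W$ for almost every $v\in\Omega$, so that the real cone $\Omega$ (which spans $U$) together with a continuity argument closes the gap. The decomposable-commutant reduction and the verification that $u\in\Omega$ satisfies \eqref{positivity238} are the routine checks supporting this step.
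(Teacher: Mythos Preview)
Your argument is correct. Both proofs arrive at the same intermediate statement---namely $jW^{\perp,\,g_u}\subset W$ for $\mu_U$-almost every $u\in\Omega$---but diverge in how they close the gap to $\im Q(S,S)=\{0\}$. The paper substitutes the double direct integral \eqref{eq:227} into \eqref{eq:219} and reads off $n\equiv 1$ a.e., then passes through the auxiliary complement $S^u$, invokes the continuity established in the proof of Corollary~\ref{cor:115} to upgrade an a.e.\ statement to an everywhere statement, and finally applies Lemma~\ref{lem:111} together with the stability of $\Omega$ under $x\mapsto P(y)x^{-1}$. You instead stay with the single direct integral \eqref{eq:219}, use the general fact that the decomposable intertwiners sit inside the full commutant, rewrite the a.e.\ inclusion as $R_u^{-1}S\subset W$, exploit the identity $R_u^{-1}=4R_{u^{-1}}$ and the inversion symmetry of $\Omega$ to obtain $R_vS\subset W$ a.e., and finish with a one-line computation and the continuity of a fixed linear form. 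Your route is more elementary in the endgame: it avoids both Lemma~\ref{lem:111} and the Gram--Schmidt continuity argument from Corollary~\ref{cor:115}. One remark: the claim ``$R_u^{-1}$ is a positive multiple of $R_{u^{-1}}$'' deserves a line of justification---since $u^{-1}$ lies in the (associative) subalgebra generated by $u$, the homomorphism $x\mapsto 2R_x$ sends it to a polynomial in $2R_u$, so $R_u$ and $R_{u^{-1}}$ commute, and then $R_uR_{u^{-1}}+R_{u^{-1}}R_u=R_e=\tfrac12\Id_V$ gives $R_{u^{-1}}=\tfrac14 R_u^{-1}$.
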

\begin{proof}
Combining \eqref{eq:219} and \eqref{eq:227}, we have
\begin{equation}\begin{split}\label{eq:251}
(\pi_0, L_a^2({\mathcal{S}(\Omega,Q)}))&\simeq\int_\Omega^\oplus\int_{jS^{u}}^\oplus n(p_{u}(v))p_{u}(v)\,d\mu_{u}(v)\,d\mu_U(u)
\\&\simeq\int_{\Omega\times jS}^\oplus n(p_{u}(p^{u} v))p_{u}(p^{u} v)\,d\mu_{U\oplus V}(u,v).
\end{split}\end{equation}
Suppose that $(\pi_0, L_a^2({\mathcal{S}(\Omega,Q)}))$ is multiplicity-free.
Then we see from \eqref{eq:251} that
\begin{align*}
n(p_{u}(p^{u} v))=1,\quad \text{a.e. }(u,v)\in\Omega\times jS.
\end{align*}
and by Proposition \ref{prop:5.2}, this implies that
\begin{equation*}
jW^{\perp,\,g_x}\subset W,\quad \text{a.e. }x\in \Omega.
\end{equation*}
Hence
\begin{equation}
\langle {x},\im Q(S^{x},S^{x})\rangle_U=\{0\},\quad \text{a.e. }x\in \Omega,
\end{equation}
and this holds for all $x\in \Omega$ as a consequence of the continuity that we have seen in the proof of Corollary \ref{cor:115}.
Then by Lemma \ref{lem:111}, we have
\begin{align*}
j((W^{\perp,\,g_e})^{\perp,\,g_x})^{\perp,\,g_e}=jW^{\perp,\,g_{x^{-1}}}\subset W,
\end{align*}
since $\Omega$ is preserved under the mapping $U^\times\ni x\mapsto P(y)x^{-1}\in U\,(y\in\Omega)$.
Therefore, we have $jS\subset S^{\perp,\,g_x}\,(x\in\Omega)$, which implies that $\im Q(S,S)=\{0\}$.
\end{proof}

\section{Coisotropic action and the multiplicity-freeness property}\label{sect:coisotropic}
In this section, we prove \ref{condi:maincoiso} $\Rightarrow$ \ref{condi:mainreal} of Theorem \ref{th:main} in Theorem \ref{th:5.11}, and the converse in Theorem \ref{th:5.14}.
A central tool in these proofs is the pseudo-inverse map $\mathcal{I}_\Delta: \Omega \rightarrow U$, defined below.
In Proposition \ref{prop:5.13}, to show the coisotropicity of the group action, we explicitly determine the orthogonal complements of the tangent spaces of the group orbits with respect to the Bergman metric of $\mathcal{S}(\Omega, Q)$ over a certain submanifold of $\mathcal{S}(\Omega, Q)$.
Note that our proofs rely on a technical result, established in Proposition \ref{prop:comm}, which asserts that $T_{e'}$ with $e'=\mathcal{I}_\Delta(e)$ lies in the center of $\mathfrak{g}(\Omega)$.

For $y\in\Omega$, let $m_y$ denote the measure on $U$ given by
\begin{align*}
m_y:=e^{-2\langle \cdot,y\rangle_U}I^{-1}I_Q^{-1}\,\mu_U.
\end{align*}
Let $\Delta$ be the function on $\Omega$ defined by
\begin{align*}
\Delta(y):=\int_\Omega \,dm_y(u)\quad (y\in\Omega).
\end{align*}
Let $\mathcal{I}_\Delta:\Omega\rightarrow U$ be given by
\begin{align*}
\langle\mathcal{I}_\Delta(y),x\rangle_U=-\partial_x\log\Delta(y)=\frac{2}{\Delta(y)}\int_\Omega\langle u,x\rangle_U\,dm_y(u)\quad (x\in U).
\end{align*}
Let $e':=\mathcal{I}_\Delta(e)$.
The following lemma shows a basic property of the map $\mathcal{I}_\Delta$.
\begin{lemma}[{\cite[Lemma 2.5]{dorfmeister_homogeneous_1982}}]\label{lem:5.5}
$\mathcal{I}_\Delta$ defines a diffeomorphism from $\Omega$ to itself.
In addition, one has
\begin{align*}
\langle e',x\rangle_U=\partial_x\partial_e\log\Delta(e)\quad(x\in U).
\end{align*}
\end{lemma}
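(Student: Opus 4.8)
The plan is to recognize $\mathcal{I}_\Delta$ as the gradient map of the strictly convex function $\phi:=-\log\Delta$ and to exploit the equivariance of $\Delta$ under $G(\Omega)$. First I would observe that, by the very definition of $\mathcal{I}_\Delta$, one has $\langle\mathcal{I}_\Delta(y),x\rangle_U=\partial_x\phi(y)$, so that $\mathcal{I}_\Delta=\nabla\phi$ with respect to $\langle\cdot,\cdot\rangle_U$. Differentiating under the integral sign and writing $\mathbb{E}_y[f]:=\Delta(y)^{-1}\int_\Omega f\,dm_y$, a direct computation gives
\begin{align*}
\partial_x\partial_{x'}\phi(y)=4\left(\mathbb{E}_y[\langle u,x\rangle_U\langle u,x'\rangle_U]-\mathbb{E}_y[\langle u,x\rangle_U]\,\mathbb{E}_y[\langle u,x'\rangle_U]\right),
\end{align*}
i.e.\ the Hessian of $\phi$ at $y$ is four times the covariance form of the probability measure $\Delta(y)^{-1}m_y$. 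Since the density of $m_y$ is strictly positive on the open set $\Omega$, this form is positive definite (the variance of $u\mapsto\langle u,x\rangle_U$ vanishes only for $x=0$). Hence $\phi$ is strictly convex, so $\mathcal{I}_\Delta=\nabla\phi$ is injective and, by the inverse function theorem, a local diffeomorphism. Moreover $\langle\mathcal{I}_\Delta(y),x\rangle_U>0$ for every $x\in\cl(\Omega)\setminus\{0\}$, so by self-duality $\mathcal{I}_\Delta$ maps $\Omega$ into $\Omega^*=\Omega$.

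The remaining point for the first assertion is surjectivity, which I would obtain from equivariance. Using the transformation laws $I({}^tg\,u)=|\det g|^{-1}I(u)$ (change of variables in $\Omega$) and $I_Q({}^tg\,u)=|\det_{\mathbb{R}}\tilde g|^{-1}I_Q(u)$ for $g=e^A$, $\tilde g=e^{\beta(A)}$ (which follows from \eqref{eq:232}), a change of variables $u\mapsto{}^tg^{-1}u$ in the integral defining $\Delta$ shows that $\Delta(gy)=c(g)\,\Delta(y)$ with the multiplicative constant $c(g)=|\det g|^{-2}|\det_{\mathbb{R}}\tilde g|^{-1}$ independent of $y$. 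Differentiating the resulting identity $\log\Delta(gy)=\log c(g)+\log\Delta(y)$ in $y$ yields ${}^tg\,\nabla\log\Delta(gy)=\nabla\log\Delta(y)$, that is
\begin{align*}
\mathcal{I}_\Delta(gy)={}^tg^{-1}\mathcal{I}_\Delta(y).
\end{align*}
Since the group generated by $\{e^A\mid A\in\mathfrak{g}(\Omega)\}$ acts transitively on $\Omega$ and $g\mapsto{}^tg^{-1}$ is an automorphism of $G(\Omega)$ preserving $\Omega^*=\Omega$, the image $\mathcal{I}_\Delta(\Omega)$ is exactly the orbit of $e'=\mathcal{I}_\Delta(e)$, hence all of $\Omega$. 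An injective surjective local diffeomorphism is a diffeomorphism, proving the first statement.

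For the displayed formula I would use homogeneity. Specializing the character computation to $g=t\,\Id_U$ (so $\tilde g=t^{1/2}\Id_V$ by \eqref{eq:214}) gives $\Delta(ty)=t^{-p}\Delta(y)$ for a fixed $p>0$, whence $\log\Delta(ty)=\log\Delta(y)-p\log t$. Differentiating in $t$ at $t=1$ gives Euler's relation $\partial_y\log\Delta(y)=-p$ for every $y\in\Omega$; that is, the function $y\mapsto\langle\nabla\log\Delta(y),y\rangle_U$ is constant. Differentiating this identity in the direction $x$ and using symmetry of the Hessian gives $\partial_x\partial_e\log\Delta(e)+\partial_x\log\Delta(e)=0$ at $y=e$, and since $\langle e',x\rangle_U=-\partial_x\log\Delta(e)$ by definition of $\mathcal{I}_\Delta$, we conclude $\langle e',x\rangle_U=\partial_x\partial_e\log\Delta(e)$.

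The main obstacle I anticipate is the surjectivity step: it requires both the precise transformation law of $I_Q$ under the Jordan-algebra representation $\beta$ via \eqref{eq:232} and the transitivity of the relevant connected subgroup of $G(\Omega)$ on $\Omega$, together with care that the contragredient action again preserves $\Omega=\Omega^*$. By contrast, the strict convexity and the Euler-relation arguments are routine once the covariance identity for the Hessian is in hand.
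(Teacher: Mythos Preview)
Your argument is correct, but there is nothing in the paper to compare it against: Lemma~\ref{lem:5.5} is quoted from Dorfmeister without proof. The convexity/covariance argument, the equivariance $\mathcal{I}_\Delta(gy)={}^tg^{-1}\mathcal{I}_\Delta(y)$, and the Euler-relation derivation of the displayed formula are all valid.

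One point of contact worth noting: the paper does, later in Lemma~\ref{lem:253}, carry out a closely related computation. There the relative invariance $\Delta(e^{\lambda T_a}e)=e^{-2\lambda(\tr T_a+\tr R_a)}\Delta(e)$ is Taylor-expanded to second order in $\lambda$, yielding $\partial_{a^2}\log\Delta(e)+\partial_a^2\log\Delta(e)=0$; polarizing and setting one argument equal to $e$ gives exactly your identity $\partial_x\log\Delta(e)+\partial_x\partial_e\log\Delta(e)=0$. So your Euler-relation route (scaling by $t\,\Id_U$) and the paper's route (the one-parameter group $e^{\lambda T_a}$) are two phrasings of the same relative-invariance computation; the paper's version additionally delivers the full Hessian $\partial_a\partial_b\log\Delta(e)=2(\tr T_{ab}+\tr R_{ab})$, which it needs elsewhere.

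A small technical remark on your surjectivity step: you do not actually need the precise form of the character $c(g)$, only that $\Delta$ is a relative invariant of the identity component $G(\Omega)^\circ$. Since $G(\Omega)^\circ$ already acts transitively on $\Omega$, the ratio $\Delta(gy)/\Delta(y)$ is automatically independent of $y$, and the equivariance of $\mathcal{I}_\Delta$ follows without invoking the transformation law of $I_Q$ explicitly.
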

In the following lemma, we shall give formulae for the value $\tilde{g}_{{\bf z}}$ of the Bergman metric $\tilde{g}$ at ${\bf z}\in{\mathcal{S}(\Omega,Q)}$.
The tangent space $T_{{\bf z}}{\mathcal{S}(\Omega,Q)}$ will be naturally identified with $U_\mathbb{C}\oplus V$.
\begin{lemma}
For $\zeta,\zeta'\in U_\mathbb{C}$ and $\gamma,\gamma'\in V$, we have
\begin{equation}\label{eq:5.1}
\begin{split}
&\tilde{g}_{(z,v)}(\zeta,\zeta')
\\&=-\re (2\pi)^{-2N}K^{-2}(z,v,z,v)
\\&\quad\cdot\left\{(2\pi)^NK(z,v,z,v)\int_\Omega\langle u,\zeta\rangle_U\langle u,\overline{\zeta'}\rangle_U \,dm_{\im z-Q(v,v)}(u)\right.
\\&\quad\quad\left.-\left(\int_\Omega\langle u,\zeta\rangle_U \,dm_{\im z-Q(v,v)}(u)\right)
\left(\int_\Omega\langle u,\overline{\zeta'}\rangle_U \,dm_{\im z-Q(v,v)}(u)\right)\right\},
\end{split}
\end{equation}
\begin{equation}\label{eq:5.2}
\begin{split}
&\tilde{g}_{(z,v)}(\zeta,\gamma)
\\&=\re (2\pi)^{-2N}K^{-2}(z,v,z,v)
\\&\quad\cdot\left\{(2\pi)^NK(z,v,z,v)\int_\Omega 2i\langle u,\zeta\rangle_U\langle u,Q(v,\gamma)\rangle_U \,dm_{\im z-Q(v,v)}(u)\right.
\\&\quad\quad\left.-\int_\Omega i\langle u,\zeta\rangle_U \,dm_{\im z-Q(v,v)}(u)\int_\Omega 2\langle u,Q(v,\gamma)\rangle_U \,dm_{\im z-Q(v,v)}(u)\right\},
\end{split}
\end{equation}
\begin{equation}\label{eq:5.3}
\begin{split}
&\tilde{g}_{(z,v)}(\gamma,\gamma')
\\&=2\re (2\pi)^{-N}K^{-1}(z,v,z,v)
\\&\quad\cdot\left(2\int_\Omega\langle u,Q(v,\gamma')\rangle_U\langle u,Q(\gamma,v)\rangle_U \,dm_{\im z-Q(v,v)}(u)\right.
\\&\quad\quad\left.+\int_\Omega\langle u,Q(\gamma,\gamma')\rangle_U \,dm_{\im z-Q(v,v)}(u)\right)-4(2\pi)^{-2N}K^{-2}(z,v,z,v)
\\&\quad\cdot\int_\Omega\langle u,Q(v,\gamma')\rangle_U \,dm_{\im z-Q(v,v)}(u)\int_\Omega\langle u,Q(\gamma,v)\rangle_U \,dm_{\im z-Q(v,v)}(u).
\end{split}
\end{equation}
\end{lemma}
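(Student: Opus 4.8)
The plan is to obtain all three formulas as the (realified) complex Hessian of the K\"ahler potential $\Phi(z,v):=\log K(z,v,z,v)$, using the integral representation of $K$ from Theorem \ref{th:5.3} together with the measures $m_y$ and the function $\Delta$. First I would record the diagonal identity: setting $z'=z$, $v'=v$ in Theorem \ref{th:5.3}, and using $z-\overline{z}=2i\im z$ together with $Q(v,v)\in U$, the exponent collapses to $-2\langle u,\im z-Q(v,v)\rangle_U$, so that
\begin{equation*}
(2\pi)^N K(z,v,z,v)=\int_\Omega e^{-2\langle u,\,\im z-Q(v,v)\rangle_U}I(u)^{-1}I_Q(u)^{-1}\,d\mu_U(u)=\Delta(\im z-Q(v,v)).
\end{equation*}
Thus $\Phi$ is, up to an additive constant, the logarithm of the total mass of $m_y$ at $y=\im z-Q(v,v)$, i.e. a cumulant generating function for the family $\{m_y\}_{y\in\Omega}$, and its mixed second derivative will therefore be a covariance with respect to the probability measure $dm_y/\Delta(y)$.

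Next I would differentiate, identifying $T_{(z,v)}\mathcal S(\Omega,Q)$ with $U_\mathbb{C}\oplus V$ as stated above. Perturbing the two arguments of $K$ holomorphically and anti-holomorphically by $s(\zeta,\gamma)$ and $t(\zeta',\gamma')$ and expanding the exponent
\begin{equation*}
\langle u,\,(z+s\zeta)-\overline{(z+t\zeta')}-2iQ(v+s\gamma,\,v+t\gamma')\rangle_U,
\end{equation*}
one reads off that $\partial_s$ brings down the factor $i\langle u,\zeta-2iQ(\gamma,v)\rangle_U$, that $\partial_{\bar t}$ brings down $i\langle u,-\overline{\zeta'}-2iQ(v,\gamma')\rangle_U$, and that the pure $s\bar t$ cross term of $-2iQ(v+s\gamma,v+t\gamma')$ contributes the extra factor $2\langle u,Q(\gamma,\gamma')\rangle_U$, which is present \emph{only} when both perturbation directions lie in $V$. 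Differentiating $\log$ of the integral under the integral sign then yields exactly the covariance combination ``second moment against $dm_y$ minus product of first moments against $dm_y$'' appearing in \eqref{eq:5.1}, \eqref{eq:5.2} and \eqref{eq:5.3}.

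Specializing the three cases finishes the computation. The pure $U_\mathbb{C}$-directions $(\gamma=\gamma'=0)$ pair $i\langle u,\zeta\rangle_U$ with $-i\langle u,\overline{\zeta'}\rangle_U$, producing $\int_\Omega\langle u,\zeta\rangle_U\langle u,\overline{\zeta'}\rangle_U\,dm_y$ minus the product of the corresponding first moments, which is \eqref{eq:5.1}; the mixed directions pair $i\langle u,\zeta\rangle_U$ with $2\langle u,Q(v,\gamma)\rangle_U$, giving \eqref{eq:5.2}; and the pure $V$-directions additionally retain the cross-term integral $\int_\Omega\langle u,Q(\gamma,\gamma')\rangle_U\,dm_y$, which is the source of the extra summand in \eqref{eq:5.3}. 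The apparent difference in prefactors, namely the uniform $(2\pi)^{-2N}K^{-2}$ in \eqref{eq:5.1}--\eqref{eq:5.2} against the mixture of $(2\pi)^{-N}K^{-1}$ and $(2\pi)^{-2N}K^{-2}$ in \eqref{eq:5.3}, is merely a matter of whether one clears the factor $\Delta=(2\pi)^N K$ out of the single-integral terms using the diagonal identity; taking the real part then turns the Hermitian Hessian into the Riemannian metric $\tilde g$ on $U_\mathbb{C}\oplus V$.

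The main obstacle I anticipate is bookkeeping rather than conceptual. One must justify differentiation under the integral sign and the attendant interchange of limits; this rests on the rapid decay of $e^{-2\langle u,y\rangle_U}$ for $y\in\Omega$, which forces every polynomial moment $\int_\Omega P(u)\,dm_y(u)$ to converge locally uniformly in $(z,v)$, so a dominated-convergence argument on compact subsets of $\mathcal S(\Omega,Q)$ suffices. The genuinely delicate point is tracking the complex structure, the conjugations hidden in the Hermitian map $Q$, and the normalization convention for $\tilde g$ carefully enough to land on the precise signs and numerical factors displayed; I would handle this by fixing once and for all the identification of a real tangent vector in $U_\mathbb{C}\oplus V$ with its $(1,0)$-part before differentiating, and only then matching against \eqref{eq:5.1}--\eqref{eq:5.3}.
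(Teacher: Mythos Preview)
The paper does not supply a proof of this lemma; it states the three formulas and moves directly on to Theorem \ref{th:5.11}. Your plan---differentiating the Gindikin integral representation of $K$ from Theorem \ref{th:5.3} under the integral sign, recognising $(2\pi)^N K(z,v,z,v)=\Delta(\im z-Q(v,v))$ on the diagonal, and reading off the mixed holomorphic/anti-holomorphic second derivatives of $\log K$ as the covariance-type combinations displayed---is exactly the computation one carries out to obtain the Bergman metric here, and it is correct. Your identification of which directions pick up the extra $Q(\gamma,\gamma')$ cross term is right and accounts for the structural difference between \eqref{eq:5.3} and \eqref{eq:5.1}--\eqref{eq:5.2}; the remaining work is, as you say, bookkeeping of signs and prefactors rather than a new idea.
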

We shall show \ref{condi:maincoiso} $\Rightarrow$ \ref{condi:mainreal} of Theorem \ref{th:main} in Theorem \ref{th:5.11}.
\begin{theorem}\label{th:5.11}
Suppose that every $G^W$-orbit of ${\mathcal{S}(\Omega,Q)}$ is a coisotropic submanifold with respect to the Bergman metric, then we have $\im Q(S,S)=\{0\}$.
\end{theorem}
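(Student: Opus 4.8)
The plan is to exploit coisotropy only along the slice $\{(z,0)\mid \im z\in\Omega\}$, where the Bergman metric collapses to something recognizable, and then to feed the resulting linear condition into the Jordan-theoretic machinery (Lemma \ref{lem:111}, the invariance of $\Omega$ under inversion, and the surjectivity of $\mathcal{I}_\Delta$) exactly as in the proof of Theorem \ref{th:220}. First I would identify the tangent space to the $G^W$-orbit through a point $(z,0)$ with $y:=\im z\in\Omega$. The infinitesimal generators are $(x_0,0)$ for $x_0\in U$ and $(2iQ(v,v_0),v_0)$ for $v_0\in W$; at $v=0$ the latter reduces to $(0,v_0)$, so the tangent space is $T=U\oplus W\subset U_\mathbb{C}\oplus V$, with $U$ the real part of $U_\mathbb{C}$.

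Next I would evaluate $\tilde g_{(z,0)}$ using \eqref{eq:5.1}--\eqref{eq:5.3}. Every term of the mixed block \eqref{eq:5.2} carries a factor $Q(v,\gamma)$, hence vanishes at $v=0$; so $\tilde g_{(z,0)}$ is block-diagonal for $U_\mathbb{C}\oplus V$. In the $V$-block, \eqref{eq:5.3} loses all its $Q(v,\cdot)$ terms and, after substituting $K(z,0,z,0)=(2\pi)^{-N}\Delta(y)$ from Theorem \ref{th:5.3}, becomes $\tilde g_{(z,0)}(\gamma,\gamma')=2\Delta(y)^{-1}\int_\Omega\langle u,\re Q(\gamma,\gamma')\rangle_U\,dm_y(u)$. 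By the defining formula for $\mathcal{I}_\Delta$, this is exactly $\langle\mathcal{I}_\Delta(y),\re Q(\gamma,\gamma')\rangle_U=g_{\mathcal{I}_\Delta(y)}(\gamma,\gamma')$, a positive-definite form since $\mathcal{I}_\Delta(y)\in\Omega$. Thus the $V$-block of the Bergman metric on the slice is $g_{\mathcal{I}_\Delta(y)}$.

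Since the Kähler form is $\omega=\tilde g(J\cdot,\cdot)$, coisotropy $T^{\perp_\omega}\subseteq T$ is equivalent to $T^{\perp_{\tilde g}}\subseteq J(T)=iU\oplus jW$. As the metric splits, $T^{\perp_{\tilde g}}=U^{\perp_{\tilde g}}\oplus W^{\perp,\,g_{\mathcal{I}_\Delta(y)}}$, and reading off the $V$-component gives $W^{\perp,\,g_{\mathcal{I}_\Delta(y)}}\subseteq jW$, i.e. $jW^{\perp,\,g_{\mathcal{I}_\Delta(y)}}\subseteq W$. Because $\mathcal{I}_\Delta$ is a diffeomorphism of $\Omega$ onto itself (Lemma \ref{lem:5.5}), as $y$ ranges over $\Omega$ the element $x=\mathcal{I}_\Delta(y)$ ranges over all of $\Omega$, so I obtain the clean statement $jW^{\perp,\,g_x}\subseteq W$ for every $x\in\Omega$. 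I would then translate this into a statement about $S$: replacing $x$ by $x^{-1}\in\Omega$ and using Lemma \ref{lem:111} with $P(e)=\Id_U$ (so that $(\,\cdot\,)^{\perp,\,g_e}$ conjugates $g_x$ to $g_{x^{-1}}$), together with $W^{\perp,\,g_e}=W^{\perp,\,\re h}=jS$ from \eqref{eq:214} and $R_xj=jR_x$, the inclusion becomes $R_xS\subseteq W$ for all $x\in\Omega$. Finally, for $s,s'\in S$ and $u\in\Omega$ one computes $\langle u,\im Q(s,s')\rangle_U=-g_u(js,s')=-2\re h(jR_us,s')$; here $R_us\in W$ by the previous step, while $s'\in S=jW^{\perp,\,\re h}$ annihilates $jW$ under $\re h$, so the expression vanishes. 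Since $\Omega$ spans $U$, this yields $\im Q(S,S)=\{0\}$.

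I expect the main obstacle to be the metric computation on the slice: verifying that the mixed block vanishes (so the $\tilde g$-orthogonal complement splits) and, above all, carrying out the bookkeeping with the normalizations $I$, $I_Q$, $\Delta$ to recognize the $V$-block precisely as $g_{\mathcal{I}_\Delta(y)}$. Once that identification is in place, the surjectivity of $\mathcal{I}_\Delta$ promotes a one-point condition to a condition for all $x\in\Omega$, after which the argument is the same inversion-plus-\,Lemma \ref{lem:111} manipulation already used for Theorem \ref{th:220}; the centrality of $T_{e'}$ from Proposition \ref{prop:comm} is not needed on this route, though it could be invoked to shorten the passage through a single base point.
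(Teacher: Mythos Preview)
Your proof is correct and follows essentially the same route as the paper's: restrict to the slice $\{(iy,0):y\in\Omega\}$, identify the $V$-block of the Bergman metric there as $g_{\mathcal I_\Delta(y)}$ via \eqref{eq:5.3}, use the surjectivity of $\mathcal I_\Delta$ (Lemma~\ref{lem:5.5}) to obtain $jW^{\perp,g_x}\subset W$ for all $x\in\Omega$, and then finish with the inversion/Lemma~\ref{lem:111} manoeuvre from the proof of Theorem~\ref{th:220}. The paper compresses the last two steps into the single line ``hence $jS\subset S^{\perp,g_{x^{-1}}}$'' without naming Lemma~\ref{lem:111}, whereas you spell out the intermediate conclusion $R_xS\subset W$ and then pair it against $S=(jW)^{\perp,\re h}$; these are equivalent rephrasings of the same computation, and your remark that Proposition~\ref{prop:comm} is unnecessary here is accurate.
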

\begin{proof}
From the assumption, we see that
\begin{align*}
(T_{(iy,0)}G^W(iy,0))^{\perp,\,\tilde{g}_{(iy,0)}}\subset jT_{(iy,0)}G^W(iy,0)\quad (y\in\Omega)
\end{align*}
and
\begin{equation}\label{eq:255}
W^{\perp,\,\tilde{g}_{(iy,0)}|_{V\times V}}\subset jW\quad (y\in\Omega).
\end{equation}
Let $x\in\Omega$.
By Lemma \ref{lem:5.5}, there exists $y\in\Omega$ such that $x=\mathcal{I}_\Delta(y)$.
Then we see from \eqref{eq:5.3} that
\begin{equation}\label{eq:gxgtilde}
\begin{split}
g_x(v_1,v_2)&=\frac{2}{\Delta(y)}\int_\Omega\langle u,\re Q(v_1,v_2)\rangle_U\,dm_y(u)\\
\\&=\tilde{g}_{(iy,0)}(v_1,v_2)\quad(v_1,v_2\in V).
\end{split}
\end{equation}
Hence by \eqref{eq:255}, we have $jS\subset S^{\perp,\,g_{x^{-1}}}$.
This shows that
\begin{align*}
jS\subset S^{\perp,\,g_x}\quad(x\in\Omega),
\end{align*}
and hence $\im Q(S,S)=\{0\}$.
\end{proof}

Next, we prepare the necessary lemma and propositions to establish the converse of Theorem \ref{th:5.11}.
\begin{lemma}\label{lem:253}
One has
\begin{align*}
\partial_a\partial_b\log\Delta(e)=2(\tr T_{ab}+\tr R_{ab})\quad (a,b\in U).
\end{align*}
\end{lemma}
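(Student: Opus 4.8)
The plan is to exploit the relative invariance of $\Delta$ under the identity component of $G(\Omega)$ and then read off the Hessian of $\log\Delta$ at $e$ from the resulting multiplicative character. First I would record how the defining integral transforms. For $A\in\mathfrak{g}(\Omega)$ the change of variables $u\mapsto {}^te^{-A}u$ in
\begin{align*}
\Delta(y)=\int_\Omega e^{-2\langle u,y\rangle_U}I(u)^{-1}I_Q(u)^{-1}\,d\mu_U(u),
\end{align*}
together with the scaling behaviour of $I$ and $I_Q$ (which follow from $e^A\Omega=\Omega$, the equivariance \eqref{eq:232}, and the Jacobians $|\det e^A|=e^{\tr A}$ on $U$ and $\det_{\mathbb{R}}e^{\beta(A)}=e^{2\re\tr_{\mathbb{C}}\beta(A)}$ on $V$), yields
\begin{align*}
\Delta(e^Ay)=e^{-\lambda(A)}\Delta(y),\qquad \lambda(A):=2\tr A+2\re\tr_{\mathbb{C}}\beta(A).
\end{align*}
I would verify the invariance ${}^te^{-A}\Omega=\Omega$ needed for this substitution from $\theta(A)=-{}^tA\in\mathfrak{g}(\Omega)$, so that ${}^tA\in\mathfrak{g}(\Omega)$ and ${}^te^{-A}\in G(\Omega)$.

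Writing $\psi:=\log\Delta$ and differentiating $\psi(e^{tA}y)=\psi(y)-t\lambda(A)$ in $t$ at $t=0$ shows that the directional derivative of $\psi$ along the vector field $y\mapsto Ay$ is the constant $-\lambda(A)$ on all of $\Omega$. Evaluating at $y=e$ with $A=T_x$, and recalling $\mathcal{I}_\Delta(y)=-\nabla\psi(y)$ from the definition of $\mathcal{I}_\Delta$, I obtain $\langle e',x\rangle_U=\lambda(T_x)$. Here the self-adjointness $R_x^*=R_x$, a consequence of $T_x\in\mathfrak{p}_0$ (so ${}^tT_x=T_x$) together with \eqref{eq:215}, makes $\tr_{\mathbb{C}}R_x$ real, whence
\begin{align*}
\langle e',x\rangle_U=2\tr T_x+2\tr R_x=2(\tr T_x+\tr R_x)\qquad(x\in U).
\end{align*}

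For the second derivative I would differentiate the constant function $y\mapsto\langle\nabla\psi(y),Ay\rangle_U$ once more: its directional derivative at $e$ along any $b\in U$ vanishes, and the product rule in the flat coordinates of $U$ gives $0=\mathrm{Hess}\,\psi(e)(b,Ae)+\langle\nabla\psi(e),Ab\rangle_U$. Taking $A=T_a$, so that $Ae=a$ and $Ab=ab$, and using $\nabla\psi(e)=-e'$, this becomes
\begin{align*}
\partial_a\partial_b\log\Delta(e)=\mathrm{Hess}\,\psi(e)(a,b)=\langle e',ab\rangle_U.
\end{align*}
Substituting $x=ab$ into the first-order formula then yields $\partial_a\partial_b\log\Delta(e)=2(\tr T_{ab}+\tr R_{ab})$, as claimed; commutativity of the Jordan product guarantees the symmetry of both sides (and the case $b=e$ recovers the relation of Lemma \ref{lem:5.5}).

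The main obstacle is the careful bookkeeping in the first step: one must correctly assemble the Jacobian $e^{\tr A}$ from $d\mu_U$, the factor $e^{2\re\tr_{\mathbb{C}}\beta(A)}$ from $d\mu_V$ inside $I_Q$, and the compensating factors produced by the $I^{-1}I_Q^{-1}$ normalization, so that the character $\lambda$ emerges with the exact constant $2$. The self-adjointness of $R_x$ is the small but essential point that removes the real part in $\tr_{\mathbb{C}}$ and delivers the term $\tr R_{ab}$.
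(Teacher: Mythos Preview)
Your argument is correct and follows essentially the same route as the paper: both exploit the relative invariance $\Delta(e^{T_a}y)=e^{-2(\tr T_a+\tr R_a)}\Delta(y)$ to read off the derivatives of $\log\Delta$ at $e$. The paper Taylor--expands $\log\Delta(e^{\lambda T_a}e)$ to second order in $\lambda$, obtaining $\partial_{a^2}\log\Delta(e)+\partial_a^2\log\Delta(e)=0$, and then polarizes; you instead differentiate the first--order identity $\langle\nabla\psi(y),T_ay\rangle_U=-\lambda(T_a)$ once more in an independent direction $b$, which yields the mixed second derivative directly and spares the polarization step. This is a minor streamlining rather than a genuinely different idea.
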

\begin{proof}
For $g=\exp(T_a)$, $x\in \Omega$, we have
\begin{align*}
I(gx)=e^{-\tr T_a}I(x),\quad I_Q(gx)=e^{-2\tr R_a}I_Q(x).
\end{align*}
Hence we have
\begin{align*}
\Delta(gx)=e^{-2\tr T_a}e^{-2\tr R_a}\Delta(x),
\end{align*}
and
\begin{equation}\label{eq:252}
\log\Delta(e^{\lambda T_a}e)=\log\Delta(e)-2\lambda(\tr T_a+\tr  R_a)
\end{equation}
for $\lambda\in\mathbb{R}$.
From the Taylor expansion of the left hand side of \eqref{eq:252}, we see that
\begin{align*}
\partial_a\log\Delta(e)&=-2(\tr T_a+\tr R_a),\\
\partial_{a^2}\log\Delta(e)+\partial_a^2\log\Delta(e)&=0,
\end{align*}
which leads to the desired expression.
\end{proof}
\begin{remark}\label{rem:e'x}
By Lemmas \ref{lem:5.5} and \ref{lem:253}, we have
\begin{align*}
\langle e',x\rangle_U=2(\tr T_x+\tr R_x)\quad(x\in U).
\end{align*}
\end{remark}
\begin{proposition}\label{prop:comm}
One has $[A,T_{e'}]=0\,(A\in\mathfrak{g}(\Omega))$.
\end{proposition}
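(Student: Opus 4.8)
The plan is to exploit the explicit formula $\langle e',x\rangle_U=2(\tr T_x+\tr R_x)$ from Remark \ref{rem:e'x} together with the Cartan decomposition $\mathfrak{g}(\Omega)=\mathfrak{k}_0\oplus\mathfrak{p}_0$. Since $\mathfrak{p}_0=\{T_a\mid a\in U\}$ and $[\mathfrak{k}_0,\mathfrak{p}_0]\subseteq\mathfrak{p}_0$ with $[A,T_x]=T_{Ax}$ for $A\in\mathfrak{k}_0$ (apply $[A,T_x]e=Ax$, using $T_xe=x$ and $Ae=0$, and that $y\mapsto T_y$ is injective onto $\mathfrak{p}_0$), it suffices to prove $[T_a,T_{e'}]=0$ for every $a\in U$ and $[A,T_{e'}]=0$ for every $A\in\mathfrak{k}_0$. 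Both will follow once I show that $e'$ lies in the center $Z(U)=\bigoplus_i\mathbb{R}e_i$ of the Jordan algebra, where $U=\bigoplus_iU_i$ is the decomposition into simple factors with units $e_i$: centrality of $e'$ gives $[T_a,T_{e'}]=0$ directly, while each $e_i$ is annihilated by every derivation, so $Ae'=0$ and hence $[A,T_{e'}]=T_{Ae'}=0$ for $A\in\mathfrak{k}_0$.

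To place $e'$ in $Z(U)$, I would first show that $e'$ is fixed by $\mathfrak{k}_0$. For $A\in\mathfrak{k}_0$ we have ${}^tA=-A$, so
\begin{align*}
\langle Ae',x\rangle_U=-\langle e',Ax\rangle_U=-2(\tr T_{Ax}+\tr R_{Ax})\quad(x\in U).
\end{align*}
Now $T_{Ax}=[A,T_x]$ has vanishing trace, and since $\beta$ is a Lie algebra homomorphism, $R_{Ax}=\beta(T_{Ax})=[\beta(A),\beta(T_x)]=[\beta(A),R_x]$ also has vanishing trace (equivalently, combine \eqref{eq:230} with $\beta(A)^*=\beta({}^tA)=-\beta(A)$ from \eqref{eq:215}). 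Hence $\langle Ae',x\rangle_U=0$ for all $x$, so $Ae'=0$; that is, $e'$ belongs to the fixed space $U^{\mathfrak{k}_0}$.

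It then remains to identify $U^{\mathfrak{k}_0}$ with $Z(U)$. Here $\mathfrak{k}_0$ is the derivation algebra $\mathrm{Der}(U)=\bigoplus_i\mathrm{Der}(U_i)$, and every derivation kills the unit of each simple factor, giving $Z(U)\subseteq U^{\mathfrak{k}_0}$. For the reverse inclusion, on each simple factor $U_i$ the fixed-point space of $\mathrm{Der}(U_i)$ is exactly $\mathbb{R}e_i$: writing a fixed element as $\sum_k\mu_kc_k$ in a Jordan frame of $U_i$ (Theorem \ref{th:228} and Remark \ref{primitive}), the transitivity of the automorphism group on Jordan frames (see \cite{faraut_analysis_1994}) forces all $\mu_k$ equal, so the element lies in $\mathbb{R}e_i$. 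Thus $U^{\mathfrak{k}_0}=Z(U)$ and $e'=\sum_i\lambda_ie_i$, whence $T_{e'}=\sum_i\lambda_iT_{e_i}$ is central in $\mathfrak{g}(\Omega)=\bigoplus_i\mathfrak{g}(\Omega_i)$, as each $T_{e_i}$ is the projection onto $U_i$. The main obstacle is precisely this last identification $U^{\mathfrak{k}_0}=Z(U)$: while $Z(U)\subseteq U^{\mathfrak{k}_0}$ is formal, excluding nonzero $\mathfrak{k}_0$-fixed vectors in the trace-zero part of a simple factor is the one genuinely Jordan-theoretic input, resting on the transitivity of the automorphism group on Jordan frames.
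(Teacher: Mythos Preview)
Your proof is correct, but it takes a markedly different route from the paper. The paper gives a four-line direct computation: for arbitrary $A\in\mathfrak{g}(\Omega)$ and $x,y\in U$ it writes $\langle AT_{e'}x,y\rangle_U=\langle e',x({}^tAy)\rangle_U$, applies Remark~\ref{rem:e'x} to get $2(\tr T_{x({}^tAy)}+\tr R_{x({}^tAy)})$, and then uses Remark~\ref{rem:254} together with \eqref{eq:raxy} to rewrite this as $2(\tr T_{(Ax)y}+\tr R_{(Ax)y})=\langle T_{e'}Ax,y\rangle_U$. No Cartan splitting, no structure theory of simple factors, no Jordan frames---just the two trace identities already recorded in the paper. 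Your argument instead first shows $Ae'=0$ for $A\in\mathfrak{k}_0$ (a nice structural fact the paper does not isolate), and then invokes the decomposition into simple ideals and transitivity on Jordan frames to place $e'$ in $Z(U)$. This buys you the stronger conclusion $e'\in Z(U)$, but at the cost of importing external Jordan-theoretic input; the paper's computation stays entirely within the identities it has already set up.

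One small point to tighten in your version: $\mathfrak{k}_0$-fixedness gives invariance under the identity component $K_0$ of the automorphism group, whereas the transitivity statement in \cite{faraut_analysis_1994} is for the full $K$. You therefore need either that $K$ is connected in the simple case, or that the set of primitive idempotents is connected so that the finitely many $K_0$-orbits collapse to one. This is true and standard, but should be said explicitly.
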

\begin{proof}
For $x,y\in U$, We have 
\begin{align*}
\langle AT_{e'}x,y\rangle_U=\langle T_{e'}x,{}^tAy\rangle_U=\langle T_xe',{}^tAy\rangle_U=\langle e',x({}^tAy)\rangle_U.
\end{align*}
By Remarks \ref{rem:254}, \ref{rem:e'x}, and \eqref{eq:raxy}, this equals
\begin{align*}
2(\tr T_{x({}^tAy)}+\tr R_{x({}^tAy)})
&=2(\tr T_{(Ax)y}+\tr R_{(Ax)y})
\\&=\langle e',(Ax)y\rangle_U
\\&=\langle T_{Ax}e',y\rangle_U=\langle T_{e'}Ax,y\rangle_U,
\end{align*}
which completes the proof.
\end{proof}

For $a\in\mathfrak{g}^V$, let $a^\#$ be the vector field on ${\mathcal{S}(\Omega,Q)}$ defined by
\begin{align*}
{a^\#}_{\bf z}:=\frac{d}{dt}\Bigr|_{t=0}e^{ta}{\bf z}\quad({\bf z}\in \mathcal{S}(\Omega,Q)).
\end{align*}
Let us denote the complex structure of ${\mathcal{S}(\Omega,Q)}$ by $J\in T{\mathcal{S}(\Omega,Q)}\otimes T^*{\mathcal{S}(\Omega,Q)}$.
For $c=(iy,js)\in C:=(i\Omega\times {{{{{{{{{{jS}}}}}}}}}})\cap{\mathcal{S}(\Omega,Q)}$, let
\begin{align*}
{\bf n}(W):=\{{\bf n}(0,w)\mid w\in W\},\quad H_c:=\{{\bf n}(x,R_{y-Q(s,s)}s')\mid x\in U,s'\in jW^{\perp,g_{e'}}\}.
\end{align*}
Then we have the following proposition.
\begin{proposition}\label{prop:5.13}
One has $(T_cG^Wc)^{\perp,\,\tilde{g}_c}=jT_cH_cc$.
\end{proposition}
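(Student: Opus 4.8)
The plan is to prove the inclusion $jT_cH_cc\subset(T_cG^Wc)^{\perp,\,\tilde g_c}$ and then check that the two sides have equal dimension. Writing $w:=y-Q(s,s)\in\Omega$ and using $a^\#_c=(x_0+2iQ(js,v_0),v_0)$ for $a=(x_0,v_0)$, the two orbit tangent spaces are $T_cG^Wc=\{(x_0+2iQ(js,v_0),v_0):x_0\in U,\ v_0\in W\}$ and, since $R_w$ is invertible, $T_cH_cc=\{(x_0+2iQ(js,R_ws'),R_ws'):x_0\in U,\ s'\in jW^{\perp,\,g_{e'}}\}$; in both cases the map $a\mapsto a^\#_c$ is injective. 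Because $\tilde g$ is invariant under the holomorphic automorphism $\phi:={\bf n}(0,-js)\in G^V$ and $d\phi$ commutes with the complex structure $J$, I would transport the problem to the base point $\phi(c)=(iw,0)$. Using $d\phi(\zeta,\gamma)=(\zeta-2iQ(\gamma,js),\gamma)$ and the Hermitian symmetry of $Q$, one finds $d\phi_c(T_cG^Wc)=U\oplus W$ and $d\phi_c(T_cH_cc)=U\oplus R_w(jW^{\perp,\,g_{e'}})$, so it remains to show that $J\bigl(U\oplus R_w(jW^{\perp,\,g_{e'}})\bigr)=iU\oplus jR_w(jW^{\perp,\,g_{e'}})$ is $\tilde g_{(iw,0)}$-orthogonal to $U\oplus W$.

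At $(iw,0)$ we have $v=0$, so every summand of \eqref{eq:5.2} contains a factor $Q(v,\cdot)=0$; hence $U_\mathbb{C}$ and $V$ are $\tilde g_{(iw,0)}$-orthogonal and the mixed pairings above vanish automatically. For the $U_\mathbb{C}$-block I would use \eqref{eq:5.1}: as $\langle u,i\eta\rangle_U=i\langle u,\eta\rangle_U$ and $\langle u,\eta'\rangle_U$ are integrated against the positive measure $m_w$ while $K(iw,0,iw,0)$ is real, the bracket in \eqref{eq:5.1} is purely imaginary, so its real part vanishes and $iU\perp U$. For the $V$-block, \eqref{eq:5.3} at $v=0$ collapses to $\tilde g_{(iw,0)}(\gamma,\gamma')=2(2\pi)^{-N}K^{-1}\re\int_\Omega\langle u,Q(\gamma,\gamma')\rangle_U\,dm_w(u)$; the defining first-moment property of $\mathcal I_\Delta$ together with $K(iw,0,iw,0)=(2\pi)^{-N}\Delta(w)$, which one reads off from Theorem \ref{th:5.3}, identifies this with $g_{\mathcal I_\Delta(w)}$. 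Thus the remaining task is purely Jordan-algebraic: to show $g_{x^\ast}(jR_ws',v_0)=0$ for $x^\ast:=\mathcal I_\Delta(w)$, $s'\in jW^{\perp,\,g_{e'}}$ and $v_0\in W$, i.e. $R_{x^\ast}jR_ws'\in W^{\perp,\,\re h}=jS$.

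This is the heart of the argument. First I would record the covariance $\mathcal I_\Delta(gy)={}^tg^{-1}\mathcal I_\Delta(y)$ for $g\in G(\Omega)$, which follows from $\Delta(gy)=\chi(g)\Delta(y)$ for a character $\chi$ (as in the proof of Lemma \ref{lem:253}) together with Lemma \ref{lem:5.5}. Taking $g=P(w^{1/2})$, which is self-adjoint and satisfies $ge=w$, gives $x^\ast=g^{-1}e'=P(w^{-1/2})e'$. Let $\hat g$ be the lift of $g$, self-adjoint by \eqref{eq:215}. The group form $R_{gx}=\hat gR_x\hat g^\ast$ of Lemma \ref{lem:105} yields $R_w=\tfrac12\hat g^2$ and $R_{x^\ast}=\hat g^{-1}R_{e'}\hat g^{-1}$, and since each $R_x$ is $\mathbb C$-linear (hence commutes with $j$) we get $R_{x^\ast}jR_w=\tfrac12\,\hat g^{-1}R_{e'}\hat g\,j$. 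Here I invoke Proposition \ref{prop:comm}: as $\beta$ is a Lie algebra homomorphism, $[\beta(A),R_{e'}]=\beta([A,T_{e'}])=0$ for all $A\in\mathfrak g(\Omega)$, so $R_{e'}$ commutes with $\hat g$ and $R_{x^\ast}jR_w=\tfrac12\,jR_{e'}$. Finally, unwinding the definition of $g_{e'}$ shows $jW^{\perp,\,g_{e'}}=R_{e'}^{-1}(S)$, so $R_{e'}s'\in S$ and $R_{x^\ast}jR_ws'=\tfrac12\,jR_{e'}s'\in jS=W^{\perp,\,\re h}$, which is exactly the orthogonality sought. I expect the identification $x^\ast=P(w^{-1/2})e'$ and the transport of the centrality of $T_{e'}$ through $\beta$ to be the main obstacle.

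It then remains to match dimensions. Since $e'\in\Omega=\Omega^\ast$ and $Q$ is $\Omega$-positive, $g_{e'}$ is positive definite, hence $\dim_\mathbb R jW^{\perp,\,g_{e'}}=\dim_\mathbb R V-\dim_\mathbb R W$; combined with the injectivity of the two action maps this gives $\dim_\mathbb R jT_cH_cc=N+\dim_\mathbb R V-\dim_\mathbb R W=\dim_\mathbb R(T_cG^Wc)^{\perp,\,\tilde g_c}$. Therefore the established inclusion is an equality, proving $(T_cG^Wc)^{\perp,\,\tilde g_c}=jT_cH_cc$.
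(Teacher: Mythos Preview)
Your argument is correct and follows the same strategy as the paper: establish the inclusion $jT_cH_cc\subset(T_cG^Wc)^{\perp,\tilde g_c}$ by transporting via a holomorphic automorphism, using the formulae \eqref{eq:5.1}--\eqref{eq:5.3} to split off the $U_\mathbb{C}$- and $V$-blocks, and invoking Proposition~\ref{prop:comm} for the key commutation, then conclude by a dimension count. The only difference is that the paper transports $c$ all the way to $(ie,0)$ (via ${\bf n}(0,-js)$ followed by $t(-A)$ with $e^Ae=y-Q(s,s)$), so that the $V$-block metric is $g_{e'}$ immediately and Lemma~\ref{lem:105} converts $e^{-\beta(A)}R_{e^Ae}s'$ to $\tfrac12 e^{\beta(A)^*}s'$, whereas you stop at $(iw,0)$, identify the $V$-block metric as $g_{\mathcal I_\Delta(w)}$, and then bring in the covariance $\mathcal I_\Delta(gy)={}^tg^{-1}\mathcal I_\Delta(y)$ with $g=P(w^{1/2})$ to reduce $R_{\mathcal I_\Delta(w)}jR_w$ to $\tfrac12 jR_{e'}$; both routes rest on the same identity $[\beta(A),R_{e'}]=0$.
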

\begin{proof}
Noting \eqref{eq:5.1}, and comparing the dimensions, we only need to prove that
\begin{align*}
(T_c{\bf n}(W)c)^{\perp,\,\tilde{g}_c}\supset jT_cH_cc.
\end{align*}
Taking $A\in\mathfrak{g}(\Omega)$ such that $e^Ae=y-Q(s,s)$, we have for $w\in W$,
\begin{align*}
&\tilde{g}(x^\#+(R_{y-Q(s,s)}s')^\#,Jw^\#)(c)
\\&=\tilde{g}_{(ie,0)}(e^{-A}x-e^{-A}[js,R_{e^Ae}s']+e^{-\beta(A)}R_{e^Ae}s',je^{-A}[js,w]+je^{-\beta(A)}w)
\\&=\tilde{g}_{(ie,0)}(e^{-\beta(A)}R_{e^Ae}s',je^{-\beta(A)}w)
\\&=\tilde{g}_{(ie,0)}(R_ee^{\beta(A)^*}s',je^{-\beta(A)}w)\\
\\&=\frac{1}{2}\tilde{g}_{(ie,0)}(e^{\beta(A)^*}s',je^{-\beta(A)}w)
\\&=\frac{1}{2}\langle e',\re Q(e^{\beta(A)^*}s',je^{-\beta(A)}w)\rangle_U
\\&=\frac{1}{2}\langle e',\re Q(s',jw)\rangle_U=0.
\end{align*}
Here the first, second, third, forth, fifth, and sixth equalities follow from the invariance of the Bergman metric, the formulas \eqref{eq:5.1} and \eqref{eq:5.2}, Lemma \ref{lem:105}, \eqref{eq:214}, \eqref{eq:gxgtilde}, and Lemma \ref{prop:comm}.
This completes the proof.
\end{proof}
We shall prove \ref{condi:mainreal} $\Rightarrow$ \ref{condi:maincoiso} of Theorem \ref{th:main}.
\begin{theorem}\label{th:5.14}
Suppose that $\im Q(S,S)=\{0\}$.
Then every $G^W$-orbit of ${\mathcal{S}(\Omega,Q)}$ is a coisotropic submanifold with respect to the Bergman metric.
\end{theorem}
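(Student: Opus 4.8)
The plan is to derive the coisotropicity of every $G^W$-orbit from Proposition \ref{prop:5.13}, by combining it with two standard reductions: a propagation argument that lets me work at one convenient point of each orbit, and the K\"ahler relation between the metric-orthogonal and the symplectic-orthogonal of a tangent subspace. First I would record that bookkeeping. Writing $\omega(\cdot,\cdot)=\tilde{g}(J\cdot,\cdot)$ for the K\"ahler form and taking a subspace $E\subset T_c\mathcal{S}(\Omega,Q)$, one has $E^{\perp,\,\omega}=J(E^{\perp,\,\tilde{g}})$ as subspaces, since $\omega(X,E)=\{0\}$ is equivalent to $JX\in E^{\perp,\,\tilde{g}}$ and $J^{-1}=-J$. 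Next, because $G^W$ acts by holomorphic isometries preserving each of its orbits, coisotropicity is constant along an orbit, so it suffices to check it at a single point. I claim every $G^W$-orbit meets $C=(i\Omega\times jS)\cap\mathcal{S}(\Omega,Q)$: since $S=jW^{\perp,\,\re h}$ gives $jS=W^{\perp,\,\re h}$ and hence $W+jS=V$, a translation ${\bf n}(0,w)$ with $w\in W$ moves the $V$-component of any point into $jS$, and a further translation ${\bf n}(x_0,0)$ makes the $U_\mathbb{C}$-component purely imaginary; the domain condition then forces $\im z\in\Omega$, so the resulting point lies in $C$.

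Fixing $c=(iy,js)\in C$ and invoking Proposition \ref{prop:5.13}, the K\"ahler reduction yields
\[
(T_cG^Wc)^{\perp,\,\omega}=J\bigl((T_cG^Wc)^{\perp,\,\tilde{g}_c}\bigr)=J\bigl(J(T_cH_cc)\bigr)=T_cH_cc,
\]
so the orbit is coisotropic at $c$ exactly when $T_cH_cc\subset T_cG^Wc$. Computing the infinitesimal generators at $c$, both tangent spaces contain the full $U$-direction $\{(x,0)\mid x\in U\}$ and, modulo it, are determined by their $V$-components, which are $W$ for $G^W$ and $R_{y-Q(s,s)}\bigl(jW^{\perp,\,g_{e'}}\bigr)$ for $H_c$. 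Hence the whole theorem reduces to the single inclusion $R_{y_0}\bigl(jW^{\perp,\,g_{e'}}\bigr)\subset W$, where $y_0:=y-Q(s,s)\in\Omega$.

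To establish this inclusion I would return to the hypothesis. The argument of Proposition \ref{prop:109} shows that $\im Q(S,S)=\{0\}$ forces $jW^{\perp,\,g_x}\subset W$ for every $x\in U^\times$; rewriting $W^{\perp,\,g_x}=R_x^{-1}(W^{\perp,\,\re h})$, using that $j$ commutes with the $\mathbb{C}$-linear operator $R_x$, and invoking the Jordan identity $R_{x^{-1}}=\tfrac14R_x^{-1}$, this becomes $R_u(S)\subset W$ for all $u\in U^\times$. On the other hand $jW^{\perp,\,g_{e'}}=R_{e'}^{-1}(S)=R_{(e')^{-1}}(S)$ as a subspace, and by Proposition \ref{prop:comm} the pseudo-inverse element $e'$ is central, so $R_{y_0}$ and $R_{(e')^{-1}}$ commute and $R_{y_0}R_{(e')^{-1}}=\tfrac12R_{y_0(e')^{-1}}$ by the linearized homomorphism identity $R_{ab}=R_aR_b+R_bR_a$. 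Since $(e')^{-1}$ is a positive central element and $y_0\in\Omega$, the product $y_0(e')^{-1}$ again lies in $\Omega\subset U^\times$, whence $R_{y_0}\bigl(jW^{\perp,\,g_{e'}}\bigr)=R_{y_0(e')^{-1}}(S)\subset W$, completing the proof.

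I expect this last paragraph to be the main obstacle: tracking the pseudo-inverse element $e'$ through the Jordan identities and using its centrality to collapse $R_{y_0}R_{(e')^{-1}}$ into a single multiplication operator $R_{y_0(e')^{-1}}$ with argument back in $\Omega$ is the delicate point, whereas the symplectic bookkeeping and the propagation to a point of $C$ are routine.
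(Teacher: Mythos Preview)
Your proof is correct and takes essentially the same route as the paper: reduce to points of $C$ via $G^WC=\mathcal{S}(\Omega,Q)$, invoke Proposition~\ref{prop:5.13} to reduce coisotropicity to the inclusion $R_{y_0}\bigl(jW^{\perp,\,g_{e'}}\bigr)\subset W$, and then use the centrality of $e'$ (Proposition~\ref{prop:comm}) to collapse $R_{y_0}R_{(e')^{-1}}$ into a single multiplication operator. The only difference is that the paper applies the hypothesis $\im Q(S,S)=\{0\}$ directly to the element $xe'^{-1}$ for arbitrary $x\in U$, obtaining $jR_{xe'^{-1}}(S)\subset S^{\perp,\,\re h}=jW$ in one stroke, so it never restricts to $U^\times$ and never needs your closing check that $y_0(e')^{-1}\in\Omega$; indeed your inclusion $R_u(S)\subset W$ already extends from $U^\times$ to all of $U$ by linearity of $u\mapsto R_u$, which lets you drop that final step entirely.
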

\begin{proof}
We have $G^WC={\mathcal{S}(\Omega,Q)}$.
Thus it is enough to show that for each $c\in C$,
\begin{align*}
(T_cG^Wc)^{\perp,\,\tilde{g}_c}\subset j T_c G^Wc.
\end{align*}
In addition, we only need to show that $H_c\subset G^W\,(c\in C)$ according to Proposition \ref{prop:5.13}.
For this, we can see that for $x\in\ U$ and $s,s'\in S$,
\begin{align*}
0&=-\langle xe'^{-1},\im Q(s,s')\rangle_U=2\re h(jR_{xe'^{-1}}s,s').
\end{align*}
On the other hand, we have
\begin{align*}
jR_{xe'^{-1}}S=(R_x(R_{e'})^{-1}+(R_{e'})^{-1}R_{x})R_{e'}W^{\perp,\,g_{e'}}=R_xW^{\perp,\,g_{e'}},
\end{align*}
where the second equality follows from Lemma \ref{prop:comm}.
Then we get the desired relation $jR_xW^{\perp,\,g_{e'}}\subset W\,(x\in U)$.
\end{proof}
\begin{remark}\label{rem:256}
We see from \eqref{eq:5.1} that for $a,a',b,b'\in U$,
\begin{align*}
\tilde{g}_{(ie,0)}(a+ib,a'+ib')=(\partial_{\frac{a}{2}}\partial_{\frac{a'}{2}}+\partial_{\frac{b}{2}}\partial_{\frac{b'}{2}})\log\Delta(e),
\end{align*}
which equals
\begin{align*}
-\frac{1}{2}(\tr T_{aa'+bb'}+\tr R_{aa'+bb'})
\end{align*}
by Lemma \ref{lem:253}.
This fact together with Remark \ref{rem:254} and \eqref{eq:raxy} shows that the adjoints of $A\in\mathfrak{g}(\Omega)$ with respect to $\langle\cdot,\cdot\rangle_U$ and $\tilde{g}_{(ie,0)}|_{U\times U}$ coincide.
\end{remark}

\medskip

\begin{bibdiv}
\begin{biblist}

\bib{arashi_multiplicityfree_a}{inproceedings}{
      author={Arashi, K.},
       title={Multiplicity-free representations of certain nilpotent {{Lie}}
  groups over {{Siegel}} domains of the second kind},
   booktitle={To appear in  the Proceedings of 7th Tunisian-Japanese Conference, Springer Proc. Math. Stat.; arXiv:2205.07262},
      editor={Baklouti, A.},
      editor={Ishi, H.},
   publisher={Springer International Publishing},
     address={Cham},
}

\bib{arashi_visible_2022a}{inproceedings}{
      author={Arashi, K.},
       title={Visible {{Actions}} of {{Certain Affine Transformation Groups}}
  of a {{Siegel Domain}} of the {{Second Kind}}},
        date={2022},
   booktitle={Lie {{Theory}} and {{Its Applications}} in {{Physics}}},
      editor={Dobrev, V.},
      series={Springer {{Proceedings}} in {{Mathematics}} \& {{Statistics}}},
      volume={396},
   publisher={Springer Nature},
     address={Singapore},
       pages={495\ndash 500},
}

\bib{auslander_polarization_1971}{article}{
      author={Auslander, L.},
      author={Kostant, B.},
       title={Polarization and unitary representations of solvable {{Lie}}
  groups. {{Appendix}} by {{Calvin C}}. {{Moore}}},
        date={1971},
     journal={Invent. Math.},
      volume={14},
       pages={255\ndash 354},
}

\bib{baklouti_visible_2021}{article}{
      author={Baklouti, A.},
      author={Sasaki, A.},
       title={Visible actions and criteria for multiplicity-freeness of
  representations of {{Heisenberg}} groups},
        date={2021},
     journal={J. Lie Theory},
      volume={31},
       pages={719\ndash 750},
}

\bib{bernat_representations_1972}{book}{
      author={Bernat, P.},
      author={Nicole, {Conze-Berline}},
      author={Duflo, M.},
      author={Monique, {L{\'e}vy-Nahas}},
      author={Rais, M.},
      author={Renouard, P.},
      author={Vergne, M.},
       title={{Repr{\'e}sentations des groupes de Lie r{\'e}solubles}},
      series={{Monographies de la Soci{\'e}t{\'e} Math{\'e}matique de France}},
   publisher={Dunod},
     address={Paris},
        date={1972},
      volume={4},
}

\bib{corwin_spectrum_1988}{article}{
      author={Corwin, L.},
      author={Greenleaf, F.~P.},
       title={Spectrum and multiplicities for restrictions of unitary
  representations in nilpotent {{Lie}} groups},
        date={1988},
     journal={Pacific J. Math.},
      volume={135},
       pages={233\ndash 267},
}

\bib{dorfmeister_homogeneous_1982}{article}{
      author={Dorfmeister, J.},
       title={Homogeneous {{Siegel}} domains},
        date={1982},
     journal={Nagoya Math. J.},
      volume={86},
       pages={39\ndash 83},
}

\bib{faraut_analysis_1994}{book}{
      author={Faraut, J.},
      author={Koranyi, A.},
       title={Analysis on {{Symmetric Cones}}},
      series={Oxford {{Mathematical Monographs}}},
   publisher={Oxford University Press},
     address={Oxford},
        date={1994},
}

\bib{faraut_invariant_1999}{article}{
      author={Faraut, J.},
      author={Thomas, E. G.~F.},
       title={Invariant {{Hilbert}} spaces of holomorphic functions},
        date={1999},
     journal={J. Lie Theory},
      volume={9},
       pages={383\ndash 402},
}

\bib{fujiwara_unitary_1974}{article}{
      author={Fujiwara, H.},
       title={On unitary representations of exponential groups},
        date={1974},
     journal={J. Fac. Sci. Tokyo Univ., Sect. I A},
      volume={21},
       pages={465\ndash 471},
}

\bib{geatti_polar_2017}{article}{
      author={Geatti, L.},
      author={Gorodski, C.},
       title={Polar symplectic representations},
        date={2017},
     journal={Algebr. Represent. Theory},
      volume={20},
       pages={751\ndash 764},
}

\bib{gindikin_analysis_1964}{article}{
      author={Gindikin, S.~G.},
       title={{{ANALYSIS IN HOMOGENEOUS DOMAINS}}},
        date={1964},
     journal={Russian Math. Surveys},
      volume={19},
       pages={1\ndash 89},
}

\bib{guillemin_multiplicityfree_1984}{article}{
      author={Guillemin, V.},
      author={Sternberg, S.},
       title={Multiplicity-free spaces},
        date={1984},
     journal={J. Differential Geom.},
      volume={19},
       pages={31\ndash 56},
}

\bib{huckleberry_multiplicityfree_1990}{article}{
      author={Huckleberry, A.~T.},
      author={Wurzbacher, T.},
       title={Multiplicity-free complex manifolds},
        date={1990},
     journal={Math. Ann.},
      volume={286},
       pages={261\ndash 280},
}

\bib{ishi_representations_1999}{article}{
      author={Ishi, H.},
       title={Representations of the affine transformation groups acting simply
  transitively on {{Siegel}} domains},
        date={1999},
     journal={J. Funct. Anal.},
      volume={167},
       pages={425\ndash 462},
}

\bib{kai_characterization_2005}{article}{
      author={Kai, C.},
      author={Nomura, T.},
       title={A characterization of symmetric cones through pseudoinverse
  maps},
        date={2005},
     journal={J. Math. Soc. Japan},
      volume={57},
       pages={195\ndash 215},
}

\bib{kitagawa_uniformly_}{misc}{
      author={Kitagawa, M.},
       title={Uniformly bounded multiplicities, polynomial identities and
  coisotropic actions},
   publisher={arXiv preprint arXiv:2109.05555},
}

\bib{knop_classification_2006}{article}{
      author={Knop, F.},
       title={Classification of multiplicity free symplectic representations},
        date={2006},
     journal={J. Algebra},
      volume={301},
       pages={531\ndash 553},
}

\bib{kobayashi_multiplicityfree_2005}{article}{
      author={Kobayashi, T.},
       title={Multiplicity-free representations and visible actions on complex
  manifolds},
        date={2005},
     journal={Publ. Res. Inst. Math. Sci.},
      volume={41},
       pages={497\ndash 549},
}

\bib{kobayashi_generalized_2007a}{article}{
      author={Kobayashi, T.},
       title={A generalized {{Cartan}} decomposition for the double coset space
  \(({{U}}(n_1)\times {{U}}(n_2)\times {{U}}(n_3))\setminus
  {{U}}(n)/({{U}}(p)\times {{U}}(q))\)},
        date={2007},
     journal={J. Math. Soc. Japan},
      volume={59},
       pages={669\ndash 691},
}

\bib{kobayashi_visible_2007}{article}{
      author={Kobayashi, T.},
       title={Visible actions on symmetric spaces},
        date={2007},
     journal={Transform. Groups},
      volume={12},
       pages={671\ndash 694},
}

\bib{kobayashi_propagation_2013a}{incollection}{
      author={Kobayashi, T.},
       title={Propagation of {{Multiplicity-Freeness Property}} for
  {{Holomorphic Vector Bundles}}},
        date={2013},
   booktitle={Lie {{Groups}}: {{Structure}}, {{Actions}}, and
  {{Representations}}: {{In Honor}} of {{Joseph A}}. {{Wolf}} on the
  {{Occasion}} of his 75th {{Birthday}}},
      editor={Huckleberry, A.},
      editor={Penkov, I.},
      editor={Zuckerman, G.},
      series={Progress in {{Mathematics}}},
      volume={306},
   publisher={Springer},
     address={New York},
       pages={113\ndash 140},
}

\bib{lisiecki_kaehler_1990}{article}{
      author={Lisiecki, W.},
       title={Kaehler coherent state orbits for representations of semisimple
  {{Lie}} groups},
        date={1990},
     journal={Annales de l'IHP. Physique th{\'e}orique},
      volume={53},
       pages={245\ndash 258},
}

\bib{lisiecki_classification_1991}{article}{
      author={Lisiecki, W.},
       title={A classification of coherent state representations of unimodular
  {{Lie}} groups},
        date={1991},
     journal={Bull. Amer. Math. Soc. (N.S.)},
      volume={25},
       pages={37\ndash 43},
}

\bib{lisiecki_coherent_1995}{article}{
      author={Lisiecki, W.},
       title={Coherent state representations. {{A}} survey},
        date={1995},
     journal={Rep. Math. Phys.},
      volume={35},
       pages={327\ndash 358},
}

\bib{mautner_unitary_1950}{article}{
      author={Mautner, F.~I.},
       title={Unitary representation of locally compact groups. {{I}}},
        date={1950},
     journal={Ann. Math. (2)},
      volume={51},
       pages={1\ndash 25},
}

\bib{neeb_holomorphy_1999}{book}{
      author={Neeb, K.-H.},
       title={Holomorphy and convexity in {{Lie}} theory},
      series={De {{Gruyter Expo}}. {{Math}}.},
   publisher={de Gruyter},
     address={Berlin},
        date={1999},
      volume={28},
}

\bib{nomura_geometric_2003}{article}{
      author={Nomura, T.},
       title={Geometric norm equality related to the harmonicity of the
  {{Poisson}} kernel for homogeneous {{Siegel}} domains},
        date={2003},
     journal={J. Funct. Anal.},
      volume={198},
       pages={229\ndash 267},
}

\bib{nussbaum_hausdorffbernsteinwidder_1955}{article}{
      author={Nussbaum, A.~E.},
       title={The {{Hausdorff-Bernstein-Widder}} theorem for semi-groups in
  locally compact {{Abelian}} groups},
        date={1955},
     journal={Duke Math. J.},
      volume={22},
       pages={573\ndash 582},
}

\bib{sasaki_characterization_2010}{article}{
      author={Sasaki, A.},
       title={A characterization of non-tube type {{Hermitian}} symmetric
  spaces by visible actions},
        date={2010},
     journal={Geom. Dedicata},
      volume={145},
       pages={151\ndash 158},
}

\bib{satake_algebraic_2014}{book}{
      author={Satake, I.},
       title={Algebraic {{Structures}} of {{Symmetric Domains}}},
   publisher={Princeton University Press},
        date={2014},
}

\bib{schwartz_sousespaces_1964}{article}{
      author={Schwartz, L.},
       title={{Sous-espaces d'espaces vectoriels topologiques et noyaux
  associ{\'e}s. (Noyaux reproduisants.)}},
        date={1964},
     journal={J. Anal. Math.},
      volume={13},
       pages={115\ndash 256},
}

\bib{tanaka_classification_2012}{article}{
      author={Tanaka, Y.},
       title={Classification of visible actions on flag varieties},
        date={2012},
     journal={Proc. Japan Acad., Ser. A},
      volume={88},
       pages={91\ndash 96},
}

\bib{tanaka_visible_2022}{article}{
      author={Tanaka, Y.},
       title={Visible actions of compact {{Lie}} groups on complex spherical
  varieties},
        date={2022},
     journal={J. Differential Geom.},
      volume={120},
       pages={375\ndash 388},
}

\bib{vergne_analytic_1976}{article}{
      author={Vergne, M.},
      author={Rossi, H.},
       title={Analytic continuation of the holomorphic discrete series of a
  semi-simple {{Lie}} group},
        date={1976},
     journal={Acta Math.},
      volume={136},
       pages={1\ndash 59},
}

\bib{vinberg_theory_1963}{article}{
      author={Vinberg, {\`E}.~B.},
       title={The theory of convex homogeneous cones},
        date={1963},
     journal={Trans. Mosc. Math. Soc.},
      volume={12},
       pages={340\ndash 403},
}

\bib{vinberg_commutative_2001}{article}{
      author={Vinberg, {\`E}.~B.},
       title={Commutative homogeneous spaces and co-isotropic symplectic
  actions},
        date={2001},
     journal={Russian Math. Surveys},
      volume={56},
       pages={1\ndash 60},
}

\end{biblist}
\end{bibdiv}

\end{document}